\newtheorem{theorem}{Theorem}[section]
\newtheorem{lemma}[theorem]{Lemma} 
\newtheorem{proposition}[theorem]{Proposition} 
\newtheorem{question}{Question} 
\newtheorem{corollary}[theorem]{Corollary}
\theoremstyle{definition}
\newtheorem{definition}[theorem]{Definition}
\newtheorem{example}[theorem]{Example}
\newtheorem{remark}[theorem]{Remark}
\newtheorem*{remark*}{Remark}
\newcommand{\N}{\mathbb{N}}
\newcommand{\Z}{\mathbb{Z}}
\newcommand{\R}{\mathbb{R}}
\newcommand{\C}{\mathbb{C}}
\newcommand{\bt}{\mathbf{t}}
\newcommand{\rank}{\mathrm{rank}\,}
\DeclareMathOperator{\Ann}{Ann}
\title[Generalized torsion orders]{Generalized torsion orders of generalized torsion elements}
\author[T.Ito]{Tetsuya Ito}
\address{Department of Mathematics, Kyoto University, Kyoto 606-8502, JAPAN}
\email{tetitoh@math.kyoto-u.ac.jp}
\subjclass[2020]{}
\keywords{}
\begin{document}

\begin{abstract}
A non-trivial element of a group is a generalized torsion element if some products of its conjugates is the identity. The minimum number of such conjugates is called a generalized torsion order. We provide several restrictions for generalized torsion orders by using $G$-invariant norm and Alexander polynomials.
\end{abstract}

\maketitle


\section{Introduction}

An element $g$ of a group $G$ is a \emph{generalized torsion element} if there exists a positive integer $n$ and $x_1,\ldots,x_n \in G$ such that they satisfy
\begin{equation}
\label{eqn:gt-eqn} g^{x_1}g^{x_2}\cdots g^{x_n} = 1.
\end{equation}
Here we put $g^{x}:=xgx^{-1}$.
The \emph{generalized torsion order} $gord(g)$ (often simply called the \emph{order}) is the minimum $n$ such that $g$ satisfies \eqref{eqn:gt-eqn} for some $x_1,\ldots,x_n \in G$.

The aim of this paper is to investigate the generalized torsion orders. 
More precisely, for $g \in G$, we study the \emph{generalized torsion equation spectrum} $t(g)$, the set of non-negative integers $n$ such that the equation \eqref{eqn:gt-eqn} has a solution. We investigate restrictions for a non-negative integer $n$ to lie in the set $t(g)$.

In \cite[Theorem 2.4]{IMT1} we observed that the \emph{stable commutator length} $scl:G \rightarrow \R_{\geq 0} \cup \{\infty\}$ gives such an restriction. In Theorem \ref{theorem:G-inv-norm} we generalize the scl restriction for $G$-invariant norms, more general classes of functions that has been studied in various contexts.

Although the scl and $G$-invariant norms are useful to investigate generalized torsion elements, their precise computations are hard in general. To get more practical restrictions, we use the Alexander polynomial.

The Alexander polynomial is a (multivariable) polynomial invariant of a group $G$. More precisely, the Alexander polynomial is defined for a group $G$ with surjection $\phi:G \rightarrow \Z^{s}$. Since such a surjection $\phi$ corresponds to a normal subgroup $N$ of $G$ with quotient $G\slash N = \Z^{s}$, we may regard the Alexander polynomial as an invariant of a pair $(G,N)$. 

As a slight generalization, we define the Alexander polynomial $\Delta_{\mathcal{A}}(t_1,\ldots,t_s)$ for an \emph{Alexander tuple} $\mathcal{A}=(G;(X,N,H))$ which is a group $G$ and its normal subgroups $X \subset N \subset H$ having several properties (see Definition \ref{def:Alex-tuple} for details).

For an irreducible polynomial $h(t_1,\ldots,t_s)$, we define
\[ t(h(t_1,\ldots,t_s)) = \{ f(1,\ldots,1) \: | \: f \in (h(\bt)) \mbox{ is positive} \} \subset \N. \]
Here we say that a polynomial $f$ is \emph{positive} if it is non-zero and all the coefficients are non-negative. The following main theorem states that $t(h)$ for an irreducible factor $h$ of the Alexander polynomial gives a restriction of the generalized torsion equation spectrum $t(g)$.\\

\noindent
\textbf{Theorem \ref{theorem:main}.} \textit{ Let $\mathcal{A}=(G;(X,N,H))$ be an Alexander tuple. For an element $g \in N$, if $g \not \in X$, then there exists an irreducible factor $h(t_1,\ldots,t_s)$ of $\Delta_\mathcal{A}(t_1,\ldots,t_s)$ such that $t(g) \subset t(h(t_1,\ldots,t_s))$.\\
}

As applications, we will discuss generalized torsion elements for knot groups. Our results lead to an interesting connection to homology growth of abelian coverings and the generalized torsion elements.\\

\noindent
\textbf{Corollary \ref{cor:tg-knot}.} 
\textit{
Let $K$ be a knot in $S^{3}$.
Let $G=G(K)=\pi_1(S^{3}\setminus K)$ be the knot group and $\Sigma_{k}(K)$ be the $k$-fold cyclic branched covering of $K$.
Assume that $\Sigma_k(K)$ is a rational homology sphere and $k=p^{e}$ is a power of a prime $p$. For $g \not \in [[G,G],[G,G]]$ and $n \in t(g)$ either
\begin{itemize}
\item[(a)] $n \geq |H_1(\Sigma_k(K);\Z)|^{\frac{1}{k-1}}$, or,
\item[(b)] $p$ divides $n$.
\end{itemize}
holds.\\
}

We also show some restrictions for generalized torsion equation spectrum for some special cases. Among them, for torus knots we get the following.\\

\noindent
\textbf{Corollary \ref{cor:torus-knot}.} 
\textit{
Let $K$ be the $(p^{a},q^{b})$-torus knot where $p<q$ are primes. Then for every $g \in G(K)$, $t(g) \subset p\mathbb{N} \cup \mathbb{N}_{\geq q}$.}\\

For example, this says that $(3,7)$-torus knot group does not have a generalized torsion element of generalized torsion order $2,4,5$. On the other hand, it is easy to see that this group has a generalized torsion element of generalized torsion order $3$ (see Proposition \ref{prop:torus-cable}).

This illustrates the \emph{generalized torsion order spectrum} $gord(G)$, the set of generalized torsion orders of a group $G$ can be complicated. Indeed, we will show the following realization result.\\

\noindent
\textbf{Corollary \ref{cor:realization}.} 
\textit{
For every subset $A \subset \N_{\geq 2}$, there exists a countable, torsion-free group $G$ such that $gord(G)=A$.
}

\section*{Acknowledgement}
The author is partially supported by JSPS KAKENHI Grant Numbers 19K03490, 21H04428, 23K03110.

\section{Basics of generalized torsion order}

We summarize basic facts and definitions on generalized torsion elements and its generalized torsion orders that has been appeared elsewhere.

\subsection{Generalized torsion order and generalized torsion equation spectrum}


\begin{definition}
The \emph{generalized torsion equation spectrum} $t(g)$ of an element $g \in G$ is the set 
\[ t(g) =\{n \in \N \: | \: g^{x_1}\cdots g^{x_n}=1 \mbox{ for some } x_1,\ldots,x_n \in G\} \]
\end{definition}

By definition, $t(g)$ is a sub-semigroup of $\N$; $n,m \in t(g)$ implies $n+m \in t(g)$. Using the set $t(g)$, generalized torsion elements and its generalized torsion orders are defined as follows. 

\begin{definition}
An element $g$ is a \emph{generalized torsion element} if $t(g) \neq \emptyset$.
The \emph{generalized torsion order} $gord(g)$ is
\[ gord(g)=\min t(g) \]
When $t(g)=\emptyset$ we define $gord(g)=\infty$.
\end{definition}

A torsion element $g$ is a generalized torsion element.
First of all, we discuss several differences between generalized torsion elements and torsion elements.

For a torsion element $g$ of $G$, clearly 
\[ t(g) \supset ord(g)\N \]
holds. Here $ord(g)$ is the order of $g$. In particular, 
\[ gord(g) \leq ord(g) \]
holds. The next example shows that the difference of $gord(g)$ and $ord(g)$ can be arbitrary large.

\begin{example}
\label{exam:ord-vs-gord}
For $m \in \N_{\geq 2} \cup \{\infty\}$
Let 
\[ G_m=\begin{cases} \langle a,b \: | \: bab^{-1}=a^{-1}, a^{m}=1 \rangle & m \in \Z_{\geq 2} \\
\langle a,b \: | \: bab^{-1}=a^{-1} \rangle & m=\infty \end{cases}\] 
Then $ord(a)=m$ but $gord(a)=2$.
\end{example}

The generalized torsion order is often called the \emph{order}. However, since  $ord(g)\neq gord(g)$ in general as Example \ref{exam:ord-vs-gord} shows, it is useful to distinguish the \emph{order} and the \emph{generalized torsion order} when $G$ has a torsion element.

For a subgroup $H$ of $G$ and the inclusion map $i:H \hookrightarrow G$, $h \in H$ is a torsion element of $H$ if and only if $\iota(h)$ is a torsion element of $G$. Furthermore $ord(h)=ord(\iota(h))$. Example \ref{exam:ord-vs-gord} shows that they are far from true for generalized torsion orders.

We will often write $gord(g)$ as $gord_G(g)$ (and $t(g)$ as $t_G(g)$) to emphasize the group $G$. For example, when $H \subset G$ is a subgroup of $G$ and $h \in H$, $gord_H(h)$ means a generalized torsion order of $h$ in the group $H$, whereas $gord_G(h)$ means a generalized torsion order of $\iota(h)$ in the group $G$, where $\iota$ is the inclusion map.

\subsection{Related properties of groups}

In this section we review several notions that are closely related to generalized torsion elements. See \cite{MR} for details.

\begin{definition}
A total ordering $<$ of a group $G$ is a \emph{bi-ordering} if $g < h$ implies $agb < ahb $ for all $a,b,g,h \in G$. Namely, $<$ is invariant under the left and right multiplication of $G$. A group $G$ is \emph{bi-orderable} if $G$ admits a bi-ordering.
\end{definition}
A generalized torsion element serves as an obstruction for bi-orderability; if $G$ has a bi-ordering $<$ then for every non-trivial element $g \in G$, $1<g$ or $g<1$ holds. When $1<g$ then $1=xx^{-1}<xgx^{-1}$ for all $x \in G$ hence product of conjugates of $x$ is not trivial. The case $g<1$ is similar.

\begin{definition}
A group $G$ is 
\begin{itemize}
\item[--] an $R$-group if $x^{n}=y^{n}$ for $x,y \in G$ and $n>1$ implies $x=y$. 
\item[--] an $R^{*}$-group if $G$ has no generalized torsion element.
\item[--] a $TR^{*}$-group if every generalized torsion element of $G$ is a torsion element.
\end{itemize}
\end{definition}

In literatures, $R^{*}$-group is often defined as a group having the property that
$g^{x_1}g^{x_2}\cdots g^{x_n} = h^{x_1}h^{x_2}\cdots h^{x_n}$ implies $g=h$.
This is equivalent to the non-existence of generalized torsion element, because
\begin{align*}
1&=g^{x_1}g^{x_2}\cdots g^{x_n}(h^{x_1}h^{x_2}\cdots h^{x_n})^{-1}\\
&= g^{x_1}g^{x_2}\cdots g^{x_{n-1}}(gh^{-1})^{x_n}(h^{x_1}\cdots h^{x_{n-1}})^{-1} \\
&= (g^{x_1}g^{x_2}\cdots g^{x_{n-1}})(h^{x_1}\cdots h^{x_{n-1}})^{-1}(gh^{-1})^{y_n} \\
&= \cdots \\
&= (gh^{-1})^{y_1}\cdots (gh^{-1})^{y_n}
\end{align*}
where $y_i$ is a suitable element of $G$. 

To see the relevance of generalized torsion elements and $R$-groups, we review the following simplest construction of a generalized torsion element. For $x,y \in G$, let $[x,y]=xyx^{-1}y^{-1}$ be the commutator of $x$ and $y$.

\begin{lemma}\cite[Proposition 3.3]{NR}
\label{lemma:generalized-torsion-simple}
If $[x^{n},y^{m}]=1$ and $[x,y]\neq 1$ for some $n,m>0$, $[x,y]$ is a generalized torsion element and $nm \in t([x,y])$.
\end{lemma}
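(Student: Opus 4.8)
The plan is to exhibit an explicit product of $nm$ conjugates of $g:=[x,y]$ that collapses to $[x^{n},y^{m}]$, which equals $1$ by hypothesis; since $g\neq 1$ this simultaneously shows that $g$ is a generalized torsion element and that $nm\in t(g)$. The whole argument rests on the two elementary commutator identities
\begin{equation}
[ab,c]=[b,c]^{a}[a,c], \qquad [a,bc]=[a,b]\,[a,c]^{b},
\end{equation}
both of which follow by direct expansion using the convention $g^{x}=xgx^{-1}$.

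First I would iterate the right-hand identity to expand the commutator in the $y$-variable. An easy induction on $m$ gives
\begin{equation}
[x,y^{m}]=[x,y]\,[x,y]^{y}\,[x,y]^{y^{2}}\cdots[x,y]^{y^{m-1}}=\prod_{j=0}^{m-1}[x,y]^{y^{j}},
\end{equation}
so that $[x,y^{m}]$ is a product of $m$ conjugates of $g$. Next I would iterate the left-hand identity to expand in the $x$-variable: an induction on $n$ yields, for any $c\in G$,
\begin{equation}
[x^{n},c]=[x,c]^{x^{n-1}}[x,c]^{x^{n-2}}\cdots[x,c]^{x}[x,c],
\end{equation}
a product of $n$ conjugates of $[x,c]$.

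Substituting $c=y^{m}$ and feeding the second expansion into the third then writes $[x^{n},y^{m}]$ as a product of $nm$ factors, each of the shape $\bigl([x,y]^{y^{j}}\bigr)^{x^{i}}=[x,y]^{x^{i}y^{j}}$, i.e.\ a genuine conjugate of $g$. Because $[x^{n},y^{m}]=1$ by hypothesis, this is exactly an instance of equation \eqref{eqn:gt-eqn} with $n$ replaced by $nm$, so $nm\in t(g)$; and since $g=[x,y]\neq 1$ we have $t(g)\neq\emptyset$, so $g$ is a generalized torsion element. I expect no genuine obstacle here: the content is the classical fact that $[x^{n},y^{m}]$ is a product of $nm$ conjugates of $[x,y]$, and the only care needed is the bookkeeping of the conjugating elements, in particular verifying that every factor is a conjugate of $g$ itself rather than of $g^{-1}$, which is precisely what the signs in the two identities guarantee.
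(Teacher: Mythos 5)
Your proposal is correct and follows essentially the same route as the paper: both decompose $[x,y^{m}]$ as the product of the $m$ conjugates $[x,y]^{y^{j}}$, $j=0,\dots,m-1$, and then write $[x^{n},y^{m}]$ as a product of $n$ conjugates of $[x,y^{m}]$, yielding $nm$ conjugates of $[x,y]$ in total. The only cosmetic difference is that you verify the key identity by induction on the standard commutator expansion formulas, whereas the paper obtains it by the telescoping observation $[x,y]=x(x^{-1})^{y}$.
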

\begin{proof}
Viewing the commutator as $[x,y]=x (x^{-1})^{y}$, we see that 
\begin{align*}
[x,y][x,y]^{y}\cdots [x,y]^{y^{m-1}} &= x (x^{-1})^y x^{y}(x^{-1})^{y^2} \cdots x^{y^{m-1}}(x^{-1})^{y^m} \\
&= x (x^{-1})^{y^{m}} =[x,y^{m}]
\end{align*}
By a similar argument, we see that $1=[x^n,y^{m}]$ is a product of $n$ conjugates of $[x,y^{m}]$, hence $1=[x^n,y^{m}]$ can be written as a product of $nm$ conjugates of $[x,y]$.
\end{proof}

In this prospect, we can characterize the $R$-groups as groups having no generalized torsion elements obtained by Lemma \ref{lemma:generalized-torsion-simple}.

\begin{proposition}
A group $G$ is an $R$-group if and only if it is torsion-free and $[x^{n},y^{m}]=1$ for $x,y \in G$ $n,m>1$ implies $[x,y]=1$ for all $x,y \in G$.
\end{proposition}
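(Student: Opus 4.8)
The plan is to prove the two implications separately, with the engine of the whole argument being the observation that in an $R$-group, commuting with a proper power already forces commuting with the element itself. Concretely, I would first isolate the following sublemma: if $G$ is an $R$-group and $u^{k}$ commutes with $v$ for some $k>1$, then $u$ commutes with $v$. This is immediate from the defining property of $R$-groups, since the relation $v u^{k} v^{-1}=u^{k}$ rewrites as $(vuv^{-1})^{k}=u^{k}$, and uniqueness of $k$-th roots then forces $vuv^{-1}=u$. Everything else is bookkeeping around this fact.

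For the forward direction, assume $G$ is an $R$-group. Torsion-freeness is the easy part: a non-trivial element $g$ of finite order $d$ would satisfy $d>1$ and $g^{d}=1=1^{d}$, whence $g=1$ by the $R$-property, a contradiction. For the commutator condition, suppose $[x^{n},y^{m}]=1$ with $n,m>1$. Reading this as ``$x^{n}$ commutes with $y^{m}$'' and applying the sublemma with $u=x$, $k=n$, $v=y^{m}$ gives that $x$ commutes with $y^{m}$; applying the sublemma a second time with $u=y$, $k=m$, $v=x$ gives that $y$ commutes with $x$, that is, $[x,y]=1$.

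For the converse, assume $G$ is torsion-free and satisfies the stated commutator condition, and suppose $x^{n}=y^{n}$ with $n>1$. Since $x^{n}=y^{n}$, these two elements trivially commute, so $[x^{n},y^{n}]=1$; the commutator hypothesis, with both exponents equal to $n>1$, then yields $[x,y]=1$. Once $x$ and $y$ commute, $(xy^{-1})^{n}=x^{n}y^{-n}=x^{n}(x^{n})^{-1}=1$, and torsion-freeness forces $xy^{-1}=1$, i.e.\ $x=y$, as required.

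The argument is largely mechanical once the sublemma is extracted, so I do not anticipate a serious obstacle. The one point demanding care is checking that every invocation of the defining $R$-property genuinely uses an exponent strictly greater than $1$: the sublemma needs $k>1$, and the commutator step in the converse needs $n>1$. This matters because the $R$-group axiom only guarantees root uniqueness for exponents $>1$, so the hypotheses $n,m>1$ in the statement are exactly what keeps each application legitimate.
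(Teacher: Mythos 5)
Your proof is correct and follows essentially the same route as the paper: the sublemma you extract (commuting with a proper power forces commuting with the element, via rewriting $vu^kv^{-1}=u^k$ as $(vuv^{-1})^k=u^k$) is exactly the step the paper applies twice under the phrase ``repeating the same argument,'' and your converse matches the paper's verbatim up to writing $xy^{-1}$ instead of $x^{-1}y$.
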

\begin{proof}
Assume that $G$ is an $R$-group. For $x \in G$, $x^n=1=1^n$ implies $x=1$ hence $G$ is torsion free. For $x,y \in G$ assume that $[x^{n},y^{m}]=1$. Then $x^{n}=y^{-m}x^{n}y^{m}=(y^{-m}xy^{m})^{n}$. Since $G$ is an $R$-group, $x=y^{-m}xy^{m}$ so $[x,y^{m}]=1$. Repeating the same argument, we conclude that $[x,y]=1$.

Conversely, assume that $G$ has these two properties. When $x^{n}=y^{n}$ for $x,y$, then $[x^{n},y^{n}]=1$ so $[x,y]=1$. Therefore $(x^{-1}y)^n=x^{-n}y^{n}=1$. Since $G$ is torsion-free, this implies $x=y$.
\end{proof}

$TR^{*}$-groups have the following simple characterization.
\begin{theorem}\cite{It2}
A group $G$ is a $TR^{*}$-group if and only if it is a T(orision)-by-$R^{*}$ group. Namely, there exists a normal subgroup $N$ which is a torsion group such that $G\slash N$ is an $R^{*}$-group. 
\end{theorem}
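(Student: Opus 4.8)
The plan is to prove the two implications separately: I expect the backward implication to be routine, while the forward one hinges on a single closure property that I anticipate will be the main obstacle. For the backward implication, assume $G$ is $T$-by-$R^{*}$, with $N$ a normal torsion subgroup such that $G/N$ is an $R^{*}$-group, and let $\pi\colon G\to G/N$ be the projection. Given a generalized torsion element $g$ satisfying $g^{x_1}\cdots g^{x_n}=1$, I would apply $\pi$ to obtain $\pi(g)^{\pi(x_1)}\cdots\pi(g)^{\pi(x_n)}=1$, so that $\pi(g)$ is trivial or a generalized torsion element of $G/N$; since $G/N$ has none, $\pi(g)=1$, hence $g\in N$ is a torsion element. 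Thus every generalized torsion element is torsion and $G$ is a $TR^{*}$-group.

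For the forward implication, assume $G$ is a $TR^{*}$-group. Here the nontrivial generalized torsion elements coincide with the nontrivial torsion elements, since a torsion element is always generalized torsion and the reverse inclusion is the definition of $TR^{*}$. I would take $N$ to be the subgroup generated by all torsion elements of $G$; as conjugates of torsion elements are torsion, $N$ is normal and contains every torsion element. The key observation is that, once $N$ is known to be a torsion group, the theorem follows: $N$ is then a normal torsion subgroup, and $G/N$ is an $R^{*}$-group because a putative nontrivial generalized torsion element $\bar g\in G/N$ would lift to a relation $w:=g^{x_1}\cdots g^{x_n}\in N$, giving $w^{m}=1$ for some $m\geq 1$; but $w^{m}$ is a product of $mn$ conjugates of $g$, so $g$ would be a generalized torsion element of $G$, hence torsion, hence $g\in N$ and $\bar g=1$, a contradiction.

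The hard part is therefore to prove this closure property, namely that $N$ is a torsion group, equivalently that the product of two torsion elements of a $TR^{*}$-group is again torsion. I want to stress that this genuinely uses the hypothesis: in a group that is not $TR^{*}$, torsion elements need not be closed under multiplication. For example, in $\langle a,x\mid a^{2}=1\rangle\cong\Z/2*\Z$ the elements $a$ and $xax^{-1}$ are torsion, yet $w=axax^{-1}$ has infinite order; since $awa^{-1}=w^{-1}$, the relation $w\cdot awa^{-1}=1$ shows $w$ is a generalized torsion element, and it is exactly such an infinite-order generalized torsion element that the $TR^{*}$ hypothesis excludes. To establish the closure I would argue that an infinite-order product of torsion elements must itself be a generalized torsion element, contradicting $TR^{*}$. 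A first handle is Lemma \ref{lemma:generalized-torsion-simple}: if $g^{s}=h^{t}=1$ then $[g^{s},h^{t}]=1$, so $[g,h]$ is a generalized torsion element and hence torsion, which shows that commutators of torsion elements are torsion and that the subgroup generated by two torsion elements has finite abelianization. Promoting this to the full statement that such subgroups, and hence $N$, are torsion is the technical step I expect to be the main difficulty.
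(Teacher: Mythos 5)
Your backward implication is correct, and your reduction of the forward implication is also sound: granting that the subgroup $N$ generated by all torsion elements is itself a torsion group, your argument that $G/N$ is an $R^{*}$-group (lift a generalized torsion relation to $w\in N$, raise it to the power $m=ord(w)$ to exhibit $1$ as a product of $mn$ conjugates of $g$, then invoke the $TR^{*}$ hypothesis) is complete. But the closure claim you defer -- that in a $TR^{*}$-group the product of two torsion elements is torsion -- is precisely the mathematical content of the theorem, and you leave it unproven. As written, your text is a correct reduction, not a proof. (Note that the paper itself states this theorem as a citation from \cite{It2} and gives no proof, so there is no in-paper argument to compare against; the gap has to be closed on its own.)

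The gap can be filled, and more cheaply than you anticipate: no "promotion" of the commutator lemma is needed, because in \emph{any} group a product of two torsion elements is either trivial or a generalized torsion element. Indeed, suppose $g^{s}=h^{t}=1$ and $w=gh\neq 1$. With the paper's convention $w^{x}=xwx^{-1}$, the telescoping identity
\[
w\cdot w^{h^{-1}}\cdot w^{h^{-2}}\cdots w^{h^{-(t-1)}}
=(gh)(h^{-1}gh^{2})(h^{-2}gh^{3})\cdots(h^{-(t-1)}gh^{t})
=g^{t}h^{t}=g^{t}
\]
exhibits $g^{t}$ as a product of $t$ conjugates of $w$. Since $g^{t}$ has finite order $m$ (dividing $s$), concatenating this product $m$ times writes $1$ as a product of $tm$ conjugates of $w$, so $w$ is a generalized torsion element. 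In a $TR^{*}$-group this forces $w$ to be torsion; hence the torsion elements are closed under products (closure under inverses and conjugation being trivial), they coincide with your subgroup $N$, and $N$ is a torsion group by construction. Inserting this lemma turns your outline into a complete proof. One smaller point: your observation that Lemma \ref{lemma:generalized-torsion-simple} gives finite abelianization of the subgroup generated by two torsion elements is true but carries no weight -- any group generated by two torsion elements has finite abelianization for trivial reasons -- so it provides no traction on the actual difficulty.
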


This characterization says, in most case, a group having a torsion element has a generalized torsion elements which are not torsion elements because usually the set of torsion elements of a group does not form a (normal) subgroup. Thus generalized torsion elements of a group with torsion would be more complicated.
 
\subsection{Generalized torsion elements of generalized torsion order two\label{section:g-ord-two}}

As for a generalized torsion element of generalized torsion order two, we have the following characterization indicating a close connection to $R$-groups.

\begin{proposition}
For $1\neq g \in G$, the followings are equivalent.
\begin{itemize}
\item[(i)] $g$ is a generalized torsion element of generalized torsion order two.
\item[(ii)] $g$ is conjugate to $g^{-1}$.
\item[(iii)] $g=yx^{-1}$ where $x \in G$ and $y \in \sqrt{x^{2}}$. Here $\sqrt{h} = \{f \in G \: | \: f^{2}=h\}$ is the set of square roots of $h$.
\end{itemize}
\end{proposition}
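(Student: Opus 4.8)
The plan is to prove the three conditions equivalent by a single cycle of elementary computations, $(\mathrm{i}) \Rightarrow (\mathrm{ii}) \Rightarrow (\mathrm{iii}) \Rightarrow (\mathrm{i})$, throughout using the convention $g^x = xgx^{-1}$ fixed in the introduction. I would first record the reduction that, because $g \neq 1$ forces $1 \notin t(g)$, the statement $gord(g) = 2$ is equivalent to the single membership $2 \in t(g)$, i.e. to the existence of $x_1,x_2 \in G$ with $g^{x_1}g^{x_2}=1$. For $(\mathrm{i}) \Rightarrow (\mathrm{ii})$ I would rewrite such a relation as $g^{x_1} = (g^{x_2})^{-1} = (g^{-1})^{x_2}$ and conjugate both sides by $x_1^{-1}$ to obtain $g = (g^{-1})^{x_1^{-1}x_2}$, exhibiting $g$ as a conjugate of $g^{-1}$.

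For $(\mathrm{ii}) \Rightarrow (\mathrm{iii})$ I would write the hypothesis as $zgz^{-1}=g^{-1}$ for some $z \in G$ and then take the witnesses $x = z$ and $y = gz$; these satisfy $yx^{-1} = g$, and the relation $zg = g^{-1}z$ gives $y^2 = g(zg)z = gg^{-1}z^2 = z^2 = x^2$, so indeed $y \in \sqrt{x^2}$. For $(\mathrm{iii}) \Rightarrow (\mathrm{i})$ I would start from $g = yx^{-1}$ with $y^2 = x^2$ and conjugate $g$ by $y^{-1}$, computing $y^{-1}gy = x^{-1}y$; substituting $y = x^2y^{-1}$ turns the right-hand side into $xy^{-1} = g^{-1}$, so $g^{y^{-1}} = g^{-1}$ and hence $g^{1}g^{y^{-1}} = gg^{-1} = 1$, which yields $2 \in t(g)$ and closes the cycle.

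I do not expect a genuine obstacle here: every implication is a one- or two-line manipulation of conjugates. The only places needing care are the consistent use of the convention $g^x = xgx^{-1}$ when transporting relations between $g$ and $g^{-1}$, and the choice of witnesses $x=z$, $y=gz$ in $(\mathrm{ii}) \Rightarrow (\mathrm{iii})$; the latter is essentially forced, since requiring $yx^{-1}=g$ determines $y$ once $x$ is taken to be the conjugating element, and the condition $y^2=x^2$ then reads off directly from $zgz^{-1}=g^{-1}$.
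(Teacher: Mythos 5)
Your proposal is correct, and its computational content is essentially the paper's: the witnesses $x=z$, $y=gz$ in your (ii)$\Rightarrow$(iii) are exactly the paper's $(gx)\in\sqrt{x^2}$ extracted from $g\,(xgx^{-1})=1$, and your (iii)$\Rightarrow$(i) via $y^{-1}gy=xy^{-1}=g^{-1}$ is the same verification the paper carries out by expanding a product of conjugates. The only organizational difference is that you run a full cycle (i)$\Rightarrow$(ii)$\Rightarrow$(iii)$\Rightarrow$(i), whereas the paper declares (i)$\Leftrightarrow$(ii) clear and proves (i)$\Leftrightarrow$(iii) directly; your write-up makes that "clear" step explicit, which is harmless and arguably tidier.
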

\begin{proof}
The equivalence of (i) and (ii) is clear. We show the equivalence of (i) and (iii).
Let $g$ be a generalized torsion element of generalized torsion order two so $g(xgx^{-1})=1$ for some $x \in G$. Then $(gx)^2=x^{2}$ so $(gx) \in \sqrt{x^{2}}$ and $g=(gx)x^{-1}$. Conversely, when $1\neq g=yx^{-1}$ for $y \in \sqrt{x^{2}}$,
\[ (yx^{-1})(xyx^{-1})(yx^{-1})(xy^{-1}x^{-1}) = 1\]
\end{proof}

\begin{corollary}\cite{HMT}
A group $G$ has a generalized torsion element of generalized torsion order two if and only if there is $x \in G$ such that $\sqrt{x^{2}} \neq \{x\}$. i.e., there exists $x, y \in G$ such that $x^{2}=y^{2}$ but $x \neq y$.
\end{corollary}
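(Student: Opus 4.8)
The plan is to deduce the corollary directly from the equivalence of conditions (i) and (iii) in the preceding Proposition. The first observation I would record is that $x$ itself always lies in $\sqrt{x^2}$, since trivially $x^2 = x^2$. Consequently the condition $\sqrt{x^2} \neq \{x\}$ is equivalent to the existence of some $y \in G$ with $y^2 = x^2$ and $y \neq x$, which is exactly the reformulation given in the statement. This reduces the corollary to matching the two descriptions of a generalized torsion element of order two.

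For the forward implication, suppose $G$ has a generalized torsion element $g$ of generalized torsion order two. Applying condition (iii) of the Proposition, I would write $g = yx^{-1}$ for some $x \in G$ and $y \in \sqrt{x^2}$. Since $g \neq 1$ we obtain $y \neq x$, while $y \in \sqrt{x^2}$ gives $y^2 = x^2$; thus $x$ and $y$ witness $\sqrt{x^2} \neq \{x\}$.

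Conversely, suppose there is $x \in G$ with $\sqrt{x^2} \neq \{x\}$, and choose $y \in \sqrt{x^2}$ with $y \neq x$. Setting $g := yx^{-1}$, we have $g \neq 1$ because $y \neq x$, and $g$ is of the form required by condition (iii). Hence $g$ is a generalized torsion element of generalized torsion order two, completing the equivalence.

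Since every step is a direct translation through the Proposition, I do not anticipate a genuine obstacle; the only point requiring a moment's care is the elementary but essential remark that $x \in \sqrt{x^2}$, which is precisely what lets the set-theoretic condition $\sqrt{x^2} \neq \{x\}$ encode the existence of a second square root.
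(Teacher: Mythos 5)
Your proof is correct and matches the paper's intent: the paper states this corollary immediately after the Proposition with no separate argument, precisely because it follows from the equivalence (i) $\Leftrightarrow$ (iii) exactly as you spell out. Your only added content, the observation that $x \in \sqrt{x^{2}}$ always, is the right (and routine) bookkeeping needed to translate $\sqrt{x^{2}} \neq \{x\}$ into the existence of a second square root.
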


\subsection{Monotonicity}

To investigate the set $t(g)$, the next simple observation is useful.

\begin{lemma}[Monotonicity]
\label{lemma:monotonicity}
Let $f:G \rightarrow H$ be a homomorphism. Then for $g \in G$, $t_G(g) \subset t_H(f(g))$. In particular, $gord_G(g) \geq gord_H(f(g))$.
\end{lemma}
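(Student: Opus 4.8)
The plan is to apply the homomorphism $f$ directly to a generalized torsion equation that witnesses membership in $t_G(g)$, and to observe that its image under $f$ is a generalized torsion equation for $f(g)$ in $H$ of the same length. The argument is essentially formal, so the work is just to unwind the definitions and check that $f$ transports the equation cleanly.

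First I would fix an arbitrary $n \in t_G(g)$ and use the definition of the spectrum: there exist $x_1,\ldots,x_n \in G$ with $g^{x_1}g^{x_2}\cdots g^{x_n}=1$, where $g^{x_i}=x_igx_i^{-1}$. The key step is then to push this identity through $f$. Because $f$ is a homomorphism it respects products and inverses, so $f(x_igx_i^{-1})=f(x_i)f(g)f(x_i)^{-1}=f(g)^{f(x_i)}$, and $f(1)=1$. Applying $f$ to the whole product therefore yields
\[ f(g)^{f(x_1)}f(g)^{f(x_2)}\cdots f(g)^{f(x_n)}=1 \]
in $H$. This is exactly a generalized torsion equation for $f(g)$ of length $n$ with conjugating elements $f(x_1),\ldots,f(x_n)\in H$, so $n \in t_H(f(g))$. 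Since $n$ was arbitrary, this gives the inclusion $t_G(g)\subset t_H(f(g))$.

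For the \textit{in particular} clause I would invoke the elementary fact that an inclusion of subsets of $\N$ reverses the minimum: if $A\subset B$ then $\min B \leq \min A$, under the convention $\min\emptyset=\infty$ so that the edge case of no solutions is handled uniformly. Taking $A=t_G(g)$ and $B=t_H(f(g))$ then gives $gord_H(f(g))=\min t_H(f(g))\leq \min t_G(g)=gord_G(g)$, which is the claimed inequality $gord_G(g)\geq gord_H(f(g))$.

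There is no serious obstacle here; the only point requiring a word of care is the empty-set and $\infty$ convention in the order inequality, but this is subsumed by the monotonicity of $\min$ under inclusion. The substance of the lemma is entirely contained in the single observation that a homomorphism carries a relation among conjugates of $g$ to the corresponding relation among conjugates of $f(g)$.
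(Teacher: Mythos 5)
Your proof is correct and is exactly the paper's argument: apply $f$ to a witnessing equation $g^{x_1}\cdots g^{x_n}=1$ to get $f(g)^{f(x_1)}\cdots f(g)^{f(x_n)}=1$, giving $t_G(g)\subset t_H(f(g))$, with the order inequality following by monotonicity of $\min$ under inclusion. The paper states this in one line; you have merely spelled out the same details, including the harmless $\min\emptyset=\infty$ convention.
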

\begin{proof}
If $g^{x_1}\cdots g^{x_n}=1$, then $f(g)^{f(x_1)}\cdots f(g)^{f(x_n)}=1$.
\end{proof}

This leads to the following useful consequences.
\begin{corollary}
\label{cor:monotonicity}
{$ $}
\begin{itemize}
\item[(i)] For a prime $p$ and a homomorphism $f:G \rightarrow \Z_p$, if $f(g) \neq 1$ then $t(g) \subset p\mathbb{N}$.
\item[(ii)] If $f:G\rightarrow H$ is a homomorphism and $g \in G$ is a generalized torsion element, $f(g)$ is a generalized torsion element unless $f(g)\neq 1$.
\item[(iii)] If a subgroup $H$ of $G$ is a retract (i.e. there is a map $p:G\rightarrow H$ such that the restriction $p|_H:H \rightarrow H$ is the identity), then for every $h \in H \subset G$, $t_H(h)=t_{G}(h)$ and $gord_H(h) = gord_G(h)$.
\item[(iv)] For every $\phi \in Aut(G)$, $t(g)=t(\phi(g))$ and  $gord(\phi(g))=gord(g)$.
\end{itemize}
\end{corollary}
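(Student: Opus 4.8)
The plan is to derive all four parts as direct consequences of the Monotonicity Lemma \ref{lemma:monotonicity}, which already supplies the inclusion $t_G(g) \subset t_H(f(g))$ for an arbitrary homomorphism $f$. The entire content of the corollary is then a matter of choosing the right homomorphism in each case, and, wherever an equality is asserted, of applying monotonicity in both directions so that the two inclusions combine. No new mechanism beyond Lemma \ref{lemma:monotonicity} is needed.

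For (i), I would apply monotonicity to $f:G\to\Z_p$ to obtain $t(g)\subset t_{\Z_p}(f(g))$, and then compute the right-hand side by hand. Since $\Z_p$ is abelian, conjugation is trivial, so the defining equation for $f(g)$ reduces to $n\cdot f(g)=0$ in $\Z_p$; because $f(g)\neq 1$ in a group of prime order, it generates $\Z_p$, and the equation forces $p\mid n$. Thus $t_{\Z_p}(f(g))=p\N$, giving $t(g)\subset p\N$. This is the only step that requires any actual computation.

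For (ii), monotonicity gives $t_H(f(g))\supset t_G(g)$, which is non-empty precisely when $g$ is a generalized torsion element; provided $f(g)\neq 1$ so that $f(g)$ is non-trivial, the non-emptiness of $t_H(f(g))$ is exactly the assertion that $f(g)$ is a generalized torsion element. For (iii) and (iv) the point is to upgrade the one-sided inclusion to an equality. In (iii), the inclusion $\iota:H\hookrightarrow G$ yields $t_H(h)\subset t_G(h)$, while the retraction $p:G\to H$ yields $t_G(h)\subset t_H(p(h))=t_H(h)$ since $p$ fixes $H$ pointwise; the resulting equality of spectra gives the equality of their minima. In (iv), applying monotonicity to $\phi$ and separately to $\phi^{-1}$ produces the two reverse inclusions and hence $t(g)=t(\phi(g))$, from which the equality of generalized torsion orders is immediate. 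Since every step but the explicit identification $t_{\Z_p}(f(g))=p\N$ in (i) is a formal manipulation of inclusions, I do not expect any genuine obstacle here.
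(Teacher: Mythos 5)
Your proposal is correct and matches the paper's intent exactly: the paper offers no separate argument for this corollary, presenting all four parts as immediate consequences of the Monotonicity Lemma \ref{lemma:monotonicity}, which is precisely what you do (including the two-sided application for the equalities in (iii) and (iv) and the computation $t_{\Z_p}(f(g)) = p\N$ in (i)). Your reading of (ii) as requiring $f(g) \neq 1$ (the statement's ``unless $f(g)\neq 1$'' is evidently a typo for ``unless $f(g)=1$'') is also the right one.
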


Although a generalized torsion equation spectrum and a generalized torsion order are sensitive to taking subgroups, we can say the following.

\begin{lemma}
If $g$ is a generalized torsion element of a group $G$, there exists a finitely generated subgroup $H$ of $G$ that contains $g$ such that $gord_H(g) = gord_G(g)$.
\end{lemma}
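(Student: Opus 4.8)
The plan is to take $H$ to be the subgroup generated by $g$ together with the finitely many conjugating elements that witness the generalized torsion order of $g$ in $G$. Concretely, I would set $n = gord_G(g)$, which is finite since $g$ is assumed to be a generalized torsion element, and fix elements $x_1,\ldots,x_n \in G$ realizing the defining relation $g^{x_1}g^{x_2}\cdots g^{x_n}=1$. I then define $H = \langle g, x_1,\ldots,x_n\rangle \subset G$. By construction $H$ is generated by at most $n+1$ elements, hence finitely generated, and it contains $g$.

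It remains to establish $gord_H(g) = gord_G(g)$ by proving two inequalities. For $gord_H(g) \leq gord_G(g)$, I would note that since $x_1,\ldots,x_n \in H$, the relation $g^{x_1}\cdots g^{x_n}=1$ already holds inside $H$; thus $n \in t_H(g)$ and so $gord_H(g) \leq n = gord_G(g)$. For the reverse inequality I would apply Lemma \ref{lemma:monotonicity} to the inclusion homomorphism $\iota : H \hookrightarrow G$, which yields $t_H(g) \subset t_G(\iota(g)) = t_G(g)$ and therefore $gord_H(g) = \min t_H(g) \geq \min t_G(g) = gord_G(g)$. Combining the two inequalities gives the claimed equality.

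There is no genuine obstacle in this argument; the only thing to observe is that the data witnessing $gord_G(g)$ consists of finitely many group elements, so that the subgroup they generate together with $g$ is automatically finitely generated, while monotonicity supplies the matching lower bound at no extra cost.
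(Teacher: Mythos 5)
Your proof is correct and is essentially identical to the paper's: both take $H=\langle g, x_1,\ldots,x_n\rangle$ for witnesses $x_1,\ldots,x_n$ of $gord_G(g)$, observe that the relation holds inside $H$ to get $gord_H(g)\leq n$, and invoke the monotonicity lemma for the inclusion $H\hookrightarrow G$ to get the reverse inequality. Nothing further is needed.
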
 
\begin{proof}
Let $n=gord_G(g)$.
Take $H$ as the subgroup generated by $g,x_1,\ldots,x_{n} \in G$ where they satisfy $g^{x_1}g^{x_2}\cdots g^{x_n}=1$. Then by the monotonicity, $n=gord_G(g) \leq gord_H(g) \leq n$ hence $gord_H(g)=gord_G(g)=n$.
\end{proof}

\section{Generalized torsion elements and \texorpdfstring{$G$}{G}-invariant norms}

In this section, we show that $G$-invariant norm can be used to evaluate the generalized torsion orders. This is an extension of our previous observation that the stable commutator length gives an lower bound of the generalized torsion order \cite{IMT1}.

\begin{definition}
Let $N$ be a normal subgroup of a group $G$. We say that a function $\nu: N \rightarrow \R_{\geq 0}$ is 
\begin{itemize}
\item[--] \emph{$G$-invariant} if $\nu(gag^{-1}) = \nu(a)$ for all $a \in N$ and $g \in G$.
\item[--] \emph{symmetric} if $\nu(a^{-1})=\nu(a)$ for all $a$.
\item[--] \emph{homogeneous} if $\nu(a^{n})=n \nu(a)$ for all $a$ and $n \in \N$.
\item[--] a \emph{norm} if $\nu(ab)\leq \nu(a)+\nu(b)$ for all $a,b \in N$.
\item[--] a \emph{quasi-norm} if there exists a constant $D_{\nu} \geq 0$ such that $\nu(ab) \leq \nu(a) + \nu(b) + D_{\nu}$ for all $ab \in N$. We call $D_{\nu}$ the \emph{defect} of $\nu$. 
\end{itemize}
\end{definition}

We often allow $\nu$ to take the value $\infty$.
When $N=G$, a $G$-invariant norm is usually called a \emph{conjugation-invariant norm} of the group $G$. Such a norm has been studied in several places. See \cite{BIP} for the relation to geometry.

For $\nu:N\rightarrow \R$, we define its \emph{symmetrization} $\nu^{s}:N\rightarrow \R$ by $\nu^{s}(a)=\frac{\nu(a)+\nu(a^{-1})}{2}$. Symmetrization preserves the property that $\nu$ is $G$-invariant, homogeneous, norm, quasi-norm. 

\begin{remark}
\begin{itemize}
\item[(i)] Although we call $\nu$ a norm, it is actually a \emph{semi-norm} since we do not require $\nu(g)=0$ iff $g=1$. Indeed, we even do not assume $\nu(1)=0$.
\item[(ii)] If $\nu$ is a quasi-norm, then $\nu+D_{\nu}:G \rightarrow \R_{\geq 0}$ given by $(\nu+D_{\nu})(g)=\nu(g) + D_{\nu}$ is a norm. In particular, if $\nu$ is a $G$-invariant quasi-norm then $\nu + D_{\nu}$ is $G$-invariant norm.
\end{itemize}
\end{remark}

\begin{example}[$G$-invariant quasimorphism]
A map $\phi:N \rightarrow \R$ is a \emph{quasimorphism} if
\[ D_{\phi}=\sup_{a,b \in N}|\phi(ab)-\phi(a)-\phi(b)| <\infty \]
$D_{\phi}$ is called the \emph{defect} of $\phi$. A quasimorphism $\phi$ is $G$-invariant if $\phi(gag^{-1})=\phi(a)$.
The absolute value $|\phi|$ of a $G$-invariant quasimorphism $\phi$ gives a $G$-invariant quasi-norm with defect $D_{\phi}$. Thus $|\phi| + D_{\phi}: N \rightarrow \R$ given by $(|\phi|+D_{\phi})(g)= |\phi(g)| + D_{\phi}$ is a $G$-invariant norm.
\end{example}

\begin{example}[Mixed commutator length]
The \emph{mixed commutator length} $cl_{G,N}(g)$ of an element $g \in [G,N]$ is the minimum number of commutators of the form $[x,a]$ or $[a,x]$ ($x \in G, a \in N$) whose product is equal to $g$. Clearly, the mixed commutator length $cl_{G,N}$ is a $G$-invariant symmetric norm.
\end{example}

For a $G$-invariant quasi-norm $\nu$, its \emph{stabilization} (or, \emph{homogenization}) $\overline{\nu}:G \rightarrow \R_{\geq 0}$ is defined by 
\[ \overline{\nu}(g)=\lim_{n \to \infty}\frac{\nu(g^{n})}{n}. \]
Then $\overline{\nu}$ is homogeneous and $G$-invariant. The next lemma gives a sufficient condition for $\overline{\nu}$ to be a quasi-norm.

\begin{lemma}
\label{lemma:stabilization}
For a $G$-invariant quasi-norm $\nu$ of $N$,
\[ \overline{\nu}(gh) \leq \overline{\nu}(g)+\overline{\nu}(h) +\frac{1}{2}\sup_{s,t \in N}\nu([x,y]) \]
\end{lemma}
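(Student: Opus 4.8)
The plan is to use the fact, recorded just before the statement, that $\overline{\nu}$ is already homogeneous and $G$-invariant; what remains is to control its subadditivity defect. Writing $C=\sup_{x,y\in N}\nu([x,y])$, the goal is precisely $\overline{\nu}(gh)-\overline{\nu}(g)-\overline{\nu}(h)\le \tfrac12 C$. Since $\overline{\nu}(g)=\lim_{n\to\infty}\nu(g^{n})/n$, I would estimate $\nu((gh)^{n})$, divide by $n$, and pass to the limit; the homogeneity $\overline{\nu}(g^{n})=n\,\overline{\nu}(g)$ then converts the estimate into the desired bound. To apply the triangle inequality without bookkeeping one may, via Remark (ii), first work with the associated norm $\nu+D_{\nu}$, which has the same homogenization $\overline{\nu}$ (the additive constant $D_{\nu}$ is killed in the limit).

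The elementary input is the commutator identity
\[ (gh)^{n}=W_{n}\,g^{n}h^{n},\qquad W_{n}=\prod_{k=2}^{n} g^{k}\,[g^{-1},h^{k-1}]\,g^{-k}, \]
obtained by moving the factors $g$ to the left one block at a time (the base case is the symmetric identity $(gh)^{2}=g^{2}[g^{-1},h]h^{2}$). Each factor of $W_{n}$ is a conjugate of a commutator of elements of $N$, so by $G$-invariance its $\nu$-value is at most $C$. Feeding this directly into the triangle inequality gives only $\nu((gh)^{n})\le \nu(g^{n})+\nu(h^{n})+(n-1)C$, which after division by $n$ yields the \emph{weaker} inequality with $C$ in place of $\tfrac12 C$. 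So the naive staircase wastes exactly a factor of two, and the real content is to recover it.

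The heart of the matter, and the step I expect to be the main obstacle, is therefore to improve the commutator count from $\sim n$ to $\sim n/2$. Concretely, I would show that $W_{n}$ — equivalently the element $(gh)^{n}g^{-n}h^{-n}$, which lies in $[N,N]$ — can be rewritten as a product of only $\lceil n/2\rceil+O(1)$ commutators of elements of $N$. This is a universal identity in the free group $\langle g,h\rangle$, so it suffices to produce it there; it is the algebraic shadow of the fact that $(gh)^{n}$ and $g^{n}h^{n}$ cobound a surface of genus $\sim n/2$ (a Culler-type halving), and, as the previous paragraph shows, it cannot be read off from the staircase expansion itself. Granting such a representation, $G$-invariance bounds each of the $\sim n/2$ commutators by $C$, whence $\nu(W_{n})\le(\tfrac n2+O(1))C$ and $\nu((gh)^{n})\le \nu(g^{n})+\nu(h^{n})+(\tfrac n2+O(1))C$; dividing by $n$ and letting $n\to\infty$ gives $\overline{\nu}(gh)\le\overline{\nu}(g)+\overline{\nu}(h)+\tfrac12 C$. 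The two points demanding care are that the halved expression consist of \emph{honest} commutators, so that $G$-invariance applies term by term, and that the constant come out as $\tfrac12 C$ rather than $\tfrac12(C+D_{\nu})$: for a genuine norm this is automatic, and for a quasi-norm one either tracks the defect explicitly or absorbs it into the supremum via the passage to $\nu+D_{\nu}$ described above.
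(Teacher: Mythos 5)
Your proposal is correct and is essentially the paper's own proof: the halving identity you grant --- that $(gh)^{2k}=g^{2k}h^{2k}\cdot(k \mbox{ commutators of elements of } N)$, so that the commutator count grows like half the exponent --- is precisely the two-element case of the paper's Lemma \ref{lemma:comm-power}, which the paper likewise does not prove from scratch but cites from \cite[Lemma 2.24]{Ca}, and the remaining steps (bound each commutator by $\sup_{s,t\in N}\nu([s,t])$ using $G$-invariance, divide by the exponent, pass to the limit) are identical to the paper's. One caveat you share with the paper: for a genuine quasi-norm the roughly $k$ applications of quasi-subadditivity contribute an extra $(k+1)D_{\nu}$, hence an extra $\tfrac{1}{2}D_{\nu}$ after dividing by $2k$ and taking limits, so the clean constant $\tfrac{1}{2}\sup_{s,t\in N}\nu([s,t])$ is honestly obtained only for norms (neither tracking the defect nor passing to $\nu+D_{\nu}$ removes it, since the latter inflates the supremum by $D_{\nu}$).
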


To prove the lemma, we use the following.
\begin{lemma}
\label{lemma:comm-power}
For $g_1,\ldots,g_n \in G$ and $k>0$, 
\[ (g_1g_2\cdots g_n)^{2k} = g_1^{2k} g_2^{2k} \cdots g_n^{2k} \cdot ((n-1)k  \mbox{ commutators}) \]
Similarly,
\[ (g_1g_2\cdots g_n)^{2k-1} = g_1^{2k-1} g_2^{2k-1} \cdots g_n^{2k-1} \cdot ((n-1)k  \mbox{ commutators}) \]
\end{lemma}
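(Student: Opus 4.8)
The plan is to first reduce to the case $n=2$, where the whole difficulty is concentrated, and then to induct on the exponent. For the reduction I would write $g_1\cdots g_n=g_1\cdot h$ with $h=g_2\cdots g_n$ and apply the (to-be-proved) case $n=2$ to the pair $(g_1,h)$, getting $(g_1\cdots g_n)^{2k}=g_1^{2k}h^{2k}\cdot\gamma_0$, where $\gamma_0$ is a product of $k$ commutators sitting to the \emph{right} of $g_1^{2k}h^{2k}$. Expanding $h^{2k}=(g_2\cdots g_n)^{2k}=g_2^{2k}\cdots g_n^{2k}\cdot\gamma_1$ by the inductive hypothesis on $n$ (with $\gamma_1$ a product of $(n-2)k$ commutators, again on the right) and concatenating requires \emph{no} further reordering, so $(g_1\cdots g_n)^{2k}=g_1^{2k}\cdots g_n^{2k}\cdot\gamma_1\gamma_0$ is a product of $g_1^{2k}\cdots g_n^{2k}$ with $(n-2)k+k=(n-1)k$ commutators; the odd exponents are handled verbatim. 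Thus the entire statement follows once one shows $(g_1g_2)^{2k}=g_1^{2k}g_2^{2k}\cdot(k\text{ commutators})$ and $(g_1g_2)^{2k-1}=g_1^{2k-1}g_2^{2k-1}\cdot(k\text{ commutators})$.

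For $n=2$ the base case is the single identity $(g_1g_2)^2=g_1^2g_2^2\cdot[g_2^{-1},g_1^{-1}]^{g_2}$, a direct computation producing one commutator. The natural engine for the exponent induction is the collection identity: repeatedly using $g_2g_1=g_1\,g_2^{g_1^{-1}}$ to push every copy of $g_1$ to the front yields $(g_1g_2)^{N}=g_1^{N}\cdot g_2^{g_1^{1-N}}\cdots g_2^{g_1^{-1}}g_2$, extracting $g_1^{N}$ at the cost of \emph{no} commutators while replacing the trailing $g_2$'s by conjugates. One then rewrites each conjugate as $g_2^{g_1^{-j}}=g_2\cdot[g_2^{-1},g_1^{j}]$ and collects the $g_2$'s to the left, which assembles $g_2^{N}$ together with a product of commutators and completes the computation.

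The main obstacle is the sharp constant. Handling the step from exponent $2k-2$ to $2k$ by peeling off a single factor $(g_1g_2)^2=g_1^2g_2^2\cdot[\,\cdot\,]$ and re-sorting costs \emph{two} commutators per step — the one from the peeled square plus one more from moving $g_2^{2k-2}$ past $g_1^{2}$ — and the collection argument above, when the $N$ conjugated $g_2$-factors are gathered one at a time, likewise produces on the order of $N-1$ commutators; both routes only give the weaker bound of about $2k$ commutators. To reach the stated count $k$ one cannot pay for the reordering at each of the $2k$ copies separately: the repetition in $(g_1g_2)^{2k}$ must be exploited so that the reordering is amortized across all copies at once (so that the extraction of $g_1^{2k}$ is genuinely free and the residual collection is charged only once per pair of factors). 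Pinning down that this amortization yields exactly $k$ commutators — the factor-of-two improvement over the naive estimate — is the delicate heart of the proof, and is the step I would scrutinize most carefully.
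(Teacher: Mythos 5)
Your reduction to the two-element case is precisely the paper's proof: the paper also inducts on $n$, writing $(g_1g_2\cdots g_n)^{2k}=(g_1\,(g_2\cdots g_n))^{2k}=g_1^{2k}(g_2\cdots g_n)^{2k}\cdot(k\mbox{ commutators})$ and then expanding the inner power, and your count $(n-2)k+k=(n-1)k$, together with the observation that no further reordering is needed because all the commutator garbage sits on the right, is the same correct bookkeeping.

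The genuine gap is the one you flag yourself: the case $n=2$ with the constant $k$ is never proved, and that case is the entire content of the lemma. Your explicit identity covers exponent $2$ only; both devices you propose for larger exponents (peeling off squares, or the collection identity followed by gathering the conjugates of $g_2$ one at a time) give on the order of $2k-1$ commutators, as you candidly note, and the proposal ends without any argument achieving $k$. This is not an optimization that can be left open: the sharp bound is exactly the statement of \cite[Lemma 2.24]{Ca}, which the paper itself does not prove but invokes as ``well-known.'' The reason the factor of two cannot be recovered by your routes is already visible at exponent $3$: the word $(ab)^3b^{-3}a^{-3}$ is conjugate to $babab^{-2}a^{-2}=b\cdot ab\cdot a\cdot b^{-1}\cdot (ab)^{-1}\cdot a^{-1}$, which is the \emph{single} commutator $[bab,\,ab^{-1}]$ --- an expression that interleaves letters coming from different copies of $ab$, whereas any scheme that extracts powers or collects one conjugate at a time keeps the copies separate and so pays roughly one commutator per copy instead of one per pair. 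Producing such interleaved (Wicks-form/genus-type) identities for all exponents is exactly the ``delicate amortization'' you anticipated, and it is what the cited lemma supplies; to complete your proof you must either reproduce that argument or quote \cite[Lemma 2.24]{Ca}, as the paper does. (A minor side remark: your identity $(g_1g_2)^2=g_1^2g_2^2\cdot[g_2^{-1},g_1^{-1}]^{g_2}$ is written in the convention $g^x=x^{-1}gx$, opposite to the paper's convention $g^x=xgx^{-1}$; this is harmless, since a conjugate of a commutator is again a commutator.)
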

\begin{proof}
We prove the lemma by induction on $n$. the case $n=2$ is well-known; see \cite[Lemma 2.24]{Ca} for example. For $n>1$, by induction 
\begin{align*}
(g_1g_2\cdots g_n)^{2k} &= (g_1 (g_2\cdots g_n))^{2k}\\
&= g_1^{2k}(g_2\cdots g_n)^{2k}\cdot (k  \mbox{ commutators}) \\
&= \cdots\\
&= g_1^{2k}g_2^{2k} \cdots g_n^{2k} \cdot ((n-1)k  \mbox{ commutators})
\end{align*}
The $2k-1$ case is similar. 
\end{proof}

\begin{proof}[Proof of Lemma \ref{lemma:stabilization}]
By Lemma \ref{lemma:comm-power}, $(gh)^{2n} = g^{2n}h^{2n}( n \mbox{ commutators})$ hence
\[ \nu((gh)^{2n}) \leq \nu(g^{2n}) + \nu(h^{2n}) + n\sup_{t,s \in N}\nu([t,s]) \]
\end{proof}

Then we have the following estimate of generalized torsion order.

\begin{theorem}
\label{theorem:G-inv-norm}
Let $N$ be a normal subgroup of a group $G$.
Assume that $g \in N$ and $n \in t_G(g)$. Then for a $G$-invariant norm $\nu$ of $N$, if $\sup\{\nu([x,t]) \: | \: x \in G, t \in N \} <\infty$, then 
\[ \overline{\nu}(g) \leq \frac{n-2}{2n}\sup\{\nu([t,s]) \: | \: t,s \in N \} \]
\end{theorem}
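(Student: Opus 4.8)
The plan is to prove the inequality by exhibiting the single high power $g^{2kn}$ as a product of a controlled number of commutators and then dividing by $2kn$ and letting $k\to\infty$. Since $\nu$ is a genuine norm, the sequence $m\mapsto \nu(g^{m})$ is subadditive, so by Fekete's lemma $\overline{\nu}(g)=\inf_m \nu(g^{m})/m$; in particular $\overline{\nu}(g)\le \nu(g^{2kn})/(2kn)$ for every $k$, and it suffices to bound the right-hand side. The conceptual point to keep in mind is that a norm is only subadditive, hence only yields \emph{upper} bounds on norms of products; I therefore cannot bound a product of $n$ conjugates of $g$ from below, and must instead write a single power $g^{2kn}$ as a short product of commutators. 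Writing $g_i:=g^{x_i}\in N$, so that $g_1\cdots g_n=1$, I will classify the commutators that appear into \emph{interior} ones $[t,s]$ with $t,s\in N$, controlled by $C:=\sup\{\nu([t,s]):t,s\in N\}$, and \emph{mixed} ones $[x,t]$ with $x\in G,\,t\in N$, controlled by $D:=\sup\{\nu([x,t]):x\in G,\,t\in N\}$, which is finite by hypothesis (and $C\le D<\infty$). The number of interior commutators will grow like $(n-2)k$ and produces the leading term, whereas the number of mixed commutators stays equal to $n$ and washes out in the limit.

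The first and crucial step is to obtain the factor $n-2$. The naive route, raising $g_1\cdots g_n=1$ to the power $2k$ and applying Lemma \ref{lemma:comm-power} to all $n$ factors, produces $(n-1)k$ commutators and only gives the weaker bound $\tfrac{n-1}{2n}C$. To gain one unit I drop a factor: from $g_1\cdots g_n=1$ I get $g_1\cdots g_{n-1}=g_n^{-1}$, and raising to the $2k$-th power and applying Lemma \ref{lemma:comm-power} to the $n-1$ factors $g_1,\ldots,g_{n-1}$ gives $g_1^{2k}\cdots g_{n-1}^{2k}\cdot R=g_n^{-2k}$, where $R$ is a product of $(n-2)k$ commutators of elements of $N$. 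Multiplying on the right by $g_n^{2k}$ shows that $g_1^{2k}\cdots g_n^{2k}=g_n^{-2k}R^{-1}g_n^{2k}$ is a conjugate of $R^{-1}$ by $g_n^{-2k}\in N$, hence again a product of $(n-2)k$ interior commutators.

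Next I isolate a power of $g$ from the product of conjugates. Setting $h:=g^{2k}$ and using $h^{x_i}=[x_i,h]\,h$, I collect the $h$'s to the right to obtain $\prod_{i=1}^n h^{x_i}=\big(\prod_{i=1}^n {}^{h^{i-1}}[x_i,h]\big)\,h^{n}$, where the prefactor is a product of $n$ mixed commutators (each ${}^{h^{i-1}}[x_i,h]$ is a conjugate of $[x_i,h]$ with $x_i\in G,\ h\in N$). Since $h^{x_i}=g_i^{2k}$ and $h^{n}=g^{2kn}$, combining this with the previous step expresses $g^{2kn}$ as a product of $n$ mixed commutators and $(n-2)k$ interior commutators, so $\nu(g^{2kn})\le nD+(n-2)k\,C$. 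Dividing by $2kn$ and letting $k\to\infty$ kills the $nD$ term and yields $\overline{\nu}(g)\le \tfrac{n-2}{2n}C$, as desired.

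The main obstacle is the passage from $n-1$ to $n-2$ in the commutator count, which is exactly what makes the statement sharp; note it forces $\overline{\nu}(g)=0$ when $n=2$, consistent with the fact that such $g$ is conjugate to $g^{-1}$. It is obtained only by using the relation to absorb the last factor before applying Lemma \ref{lemma:comm-power} to the remaining $n-1$ factors. The hypothesis $\sup\{\nu([x,t])\}<\infty$ enters precisely to ensure that the $n$ unavoidable mixed commutators contribute only a bounded error that disappears in the stabilization limit; without it the term $nD$ could be infinite and the bound vacuous.
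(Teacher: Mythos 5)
Your proof is correct and follows essentially the same route as the paper's: both reduce the generalized torsion relation to $n-1$ conjugate factors so that Lemma \ref{lemma:comm-power} yields only $(n-2)k$ commutators of elements of $N$, rewrite each $(g^{2k})^{x_i}$ as a mixed commutator $[x_i,g^{2k}]$ times $g^{2k}$, collect the powers of $g^{2k}$, and then stabilize as $k\to\infty$ so that the boundedly many mixed commutators wash out. The only cosmetic differences are that you keep all $n$ conjugates generic rather than normalizing one of them to $g$ itself, you invoke Fekete's lemma explicitly, and your bookkeeping produces the positive power $g^{2kn}$ directly rather than $g^{-2nk}$.
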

\begin{proof}
Since $g \in N$ satisfies the order $n$ generalized torsion equation, there exists $x_1,\ldots,x_{n-1} \in G$ such that 
\[ g^{-1} = g^{x_1}\cdots g^{x_{n-1}} \]
Therefore by taking $2k$-th powers for $k>0$, by Lemma \ref{lemma:comm-power}
\[ 
g^{-2k} = (g^{x_1})^{2k}(g^{x_2})^{2k}\cdots (g^{x_{n-1}})^{2k} \cdot ( (n-2)k \mbox{ commutators}) \\
\]
so
\begin{align*}
g^{-2nk} = [x_1,g^{2k}]^{\ast}[x_2,g^{2k}]^{\ast}\cdots [x_{n-1},g^{2k}]^{\ast} \cdot ( (n-2)k \mbox{ commutators}) \\
\end{align*}
Here $[x,y]^{*}$ means a conjugate of $[x,y]$. Furthermore, the $(n-2)k$ commutators are actually commutators of elements of $N$ (see the proof of Lemma \ref{lemma:comm-power}).
Therefore
\begin{align*}
\frac{\nu(g^{2nk})}{2nk} & \leq \frac{n-1}{2nk} \sup_{x\in G, t \in N}\nu([x,t]) +\frac{(n-2)}{2n} \sup_{s,t \in N}\nu([s,t])
\end{align*}
By taking $k \to \infty$ we get the desired inequality.
\end{proof}

Theorem \ref{theorem:G-inv-norm} makes sense only if $\sup_{s,t \in N}\nu([s,t]) <\infty$. In this case, the assumption $\sup\{\nu([x,t]) \: | \: x \in G, t \in N \} <\infty$ is automatically satisfied.

\begin{example}[Stable mixed commutator length \cite{KKMMM1,KKMMM2}]
The \emph{stable mixed commutator length} $scl_{G,N}(g)$ is the stabilization of the mixed commutator length $cl_{G,N}$. When $N=G$, the (stable) mixed commutator length is called the \emph{(stable) commutator length} of $G$ denoted by $cl_G$ and $scl_G$, respectively.

For an element $g \in G$ such that $g^{\ell} \in [G,N]$ for some $\ell>0$ one can define the stable mixed commutator length $scl_{G,N}(g)$ by $scl_{G,N}(g) = \frac{{cl}_{G,N}(g^{\ell})}{\ell}$.
\end{example}

Applying Theorem \ref{theorem:G-inv-norm} for mixed commutator length or $G$-invariant homogeneous quasimorphisms we get the following. 

\begin{corollary}
\label{cor:mix-scl}
If $g \in N$ and $n \in t_G(g)$
\begin{equation}
\label{eqn:scl}
scl_{G,N}(g) \leq \frac{n-2}{2n} \ \left( < \frac{1}{2} \right)
\end{equation}
and
\begin{equation}
\label{eqn:q-hom}
|\phi(g)| \leq \frac{n-2}{2n}\left(\sup_{s,t \in N}|\phi([s,t])| + D_{\phi} \right)
= \frac{n-2}{n}D_{\phi}\footnote{Here we use the equality $\sup_{s,t \in N}|\phi([s,t])| = D_{\phi}$ \cite{Ba,Ca}.}
\end{equation}
for every $G$-invariant homogeneous quasimorphism $\phi:N \rightarrow \R$.
\end{corollary}

When $N=G$, \eqref{eqn:scl} is nothing but \cite[Theorem 2]{IMT1}. Since $scl_G(g)\leq scl_{G,N}(g)$, Corollary \ref{cor:mix-scl} gives stronger restriction.

\begin{remark}
It is known that $ \sup_{\phi} \frac{|\phi(g)|}{2D_{\phi}} =scl_{G,N}(g)$ where $\phi$ runs all $G$-invariant homogeneous quasimorphism which is not a homomorphism (Bavard's duality; \cite{Ba,KKMMM1}) so \eqref{eqn:q-hom} follows from \eqref{eqn:scl} and vice versa. We remark that our argument does not use these results whose proof uses Hahn-Banach theorem.
\end{remark}

In a similar vein, we have the following variant of Theorem \ref{theorem:G-inv-norm}.
 
\begin{proposition}
\label{prop:norm-variant}
Let $N$ be a normal subgroup of a group $G$.
Assume that $g \in N$ and $n \in t_G(g)$. Then for a symmetric $G$-invariant norm $\nu$ of $N$, 
 $\nu(g^{n}) \leq (n-1)\sup\{\nu([x,g]) \: | \: x \in G\}$
holds. In particular,
\[ \overline{\nu}(g) \leq \frac{n-1}{n}\sup\{\nu([x,g]) \: | \: x \in G\} \]
\end{proposition}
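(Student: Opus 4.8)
The plan is to follow the scheme of the proof of Theorem~\ref{theorem:G-inv-norm}, but to exploit that $\nu$ is here an honest subadditive symmetric norm, so that no $2k$-th power trick is needed; this is exactly what upgrades the factor $\frac{n-2}{2n}$ to $\frac{n-1}{n}$. First I would put the generalized torsion equation in normalized form. Since $n\in t_G(g)$, we have $g^{x_1}\cdots g^{x_n}=1$, and after conjugating this relation by $x_1^{-1}$ (and relabelling the $x_i$) we may assume the first factor is $g$ itself, so that
\[ g^{-1}=g^{x_1}g^{x_2}\cdots g^{x_{n-1}} \]
is a product of exactly $n-1$ conjugates of $g$. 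It is precisely this normalization that will produce $n-1$ commutators rather than $n$.

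Next I would rewrite each conjugate as $g^{x_i}=[x_i,g]\,g$, set $c_i=[x_i,g]$, and telescope the $g$'s to the right. A direct induction gives
\[ g^{-1}=c_1\,(g c_2 g^{-1})\,(g^{2} c_3 g^{-2})\cdots (g^{\,n-2}c_{n-1}g^{-(n-2)})\,g^{\,n-1}, \]
and multiplying on the right by $g^{-(n-1)}$ yields
\[ g^{-n}=\prod_{i=1}^{n-1} g^{\,i-1}[x_i,g]\,g^{-(i-1)}. \]
Two points must be checked here, and this is where I would be most careful: each $[x_i,g]=x_i g x_i^{-1}g^{-1}$ lies in $N$ because $N$ is normal, and each factor is a conjugate of $[x_i,g]$ by the element $g^{\,i-1}\in N\subset G$, so that $G$-invariance is legitimately applicable to every factor. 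Applying subadditivity of $\nu$, then $G$-invariance, then symmetry to pass from $\nu(g^{-n})$ to $\nu(g^{n})$, gives
\[ \nu(g^{n})=\nu(g^{-n})\le \sum_{i=1}^{n-1}\nu\!\left(g^{\,i-1}[x_i,g]g^{-(i-1)}\right)=\sum_{i=1}^{n-1}\nu([x_i,g])\le (n-1)\sup_{x\in G}\nu([x,g]), \]
which is the first assertion.

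For the ``in particular'' clause I would use that $\overline{\nu}$ is homogeneous and dominated by $\nu$: since $\overline{\nu}(g^{n})=n\,\overline{\nu}(g)$, and $\overline{\nu}(g^{n})=\lim_m \nu(g^{nm})/m\le \nu(g^{n})$ by subadditivity, dividing the displayed bound by $n$ gives $\overline{\nu}(g)\le \frac{n-1}{n}\sup_{x\in G}\nu([x,g])$. The only genuine subtlety is the bookkeeping that forces $n-1$ conjugates instead of $n$ — namely the normalization turning the $n$-fold relation into an expression for $g^{-1}$ as a product of $n-1$ conjugates — together with the verification that the telescoped commutators stay inside $N$ and are conjugated only by elements of $G$; everything else is routine manipulation.
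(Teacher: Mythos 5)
Your proof is correct and is essentially the paper's own argument with the details filled in: the paper likewise normalizes the relation to $g^{-1}=g^{x_1}\cdots g^{x_{n-1}}$ and writes $g^{-n}$ as a product of $n-1$ conjugates of the commutators $[x_i,g]$, exactly the telescoping identity you derive. Your additional checks (that the factors lie in $N$, where symmetry and subadditivity enter, and the stabilization step) are all sound and just make explicit what the paper leaves implicit.
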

\begin{proof}
Assume that 
\[ g^{-1} = g^{x_1}\cdots g^{x_{n-1}} \]
for some $x_1,\ldots,x_{n-1} \in G$. 
Then 
\[ g^{-n} =[x_1,g]^{\ast} \cdots [x_{n-1},g]^{\ast} \]
where $[x_i,g]^{*}$ means a suitable conjugate of $[x_i,g]$.
\end{proof}

\begin{example}[(Stable) $\gamma_k$-length  \cite{CZ}]
The \emph{$\gamma_k$-length} $\ell_{\gamma_k}$ of a group $G$ is the minimum number the $k$-th commutator $[g_1,[g_2,[\ldots,[g_{k-1},g_k]]\cdots]]$ $(g_i \in G)$ and its inverses that is needed to express $g$.
The $\gamma_k$-length is a $G$-invariant norm on the $(k-1)$th lower central subgroup $\Gamma_{k-1}G=[G,[G,[\ldots,[G,G]]\cdots ]]$.

Assume that $g \in \Gamma_{k}G$ and $n \in t_G(g)$. If $\ell_{\gamma_{k-1}}(g)=1$, then $\ell_{\gamma_k}([x,g])=1$ for all $x \in G$ so Proposition \ref{prop:norm-variant} shows that the stable $\gamma_k$-length satisfies 
\[ \overline{\ell_{\gamma_k}}(g) \leq \frac{n-1}{n}. \]

\end{example}

\section{Alexander polynomial criterion}

\subsection{Alexander polynomial of modules}

We quickly review the Alexander polynomial. We refer to \cite{Hi} for algebraic treatments of Alexander polynomial.

Let $\Lambda= \Z[t_1^{\pm 1}, \ldots, t_s^{\pm 1}]$ be the Laurent polynomial ring of $s$ variables. 
For $f,g \in \Lambda$, we denote by $f \doteq g$  if $f = ug$ where $u \in \Lambda$ is a unit of $\Lambda$.
We will write an element $f \in \Lambda$ as $f(\bt) = \sum_{I}a_I \bt^{I}$, where $I=(i_1,\ldots, i_s)$ is a multi-index and $\bt^{I} = t_1^{i_1}t_2^{i_2} \cdots t_s^{i_s}$. We denote by $\varepsilon:\Lambda \rightarrow \Z$ the \emph{augmentation map} $\varepsilon( \sum_{I}a_I \bt^{I}) = \sum_I a_I$ (i.e. $\varepsilon(f(\bt))=f(1,\ldots,1)$).

A $\Lambda$-module $M$ is \emph{finitely presented} if there is an exact sequence of $\Lambda$-modules 
\[ \Lambda^{m} \stackrel{A}{\rightarrow}\Lambda^{n} \rightarrow M \rightarrow 0. \]
called a \emph{finite presentation} of $M$. The matrix $A$ is called a \emph{presentation matrix} of $M$. 

\begin{definition}[Elementary ideal and Alexander polynomial]
Let $M$ be a finitely presented $\Lambda$-module and $A$ be its presentation matrix.
The \emph{$k$-th elementary ideal} $E_k(M)$ is the ideal of $\Lambda$ generated by $(n-k)$ minors of $A$ (when $k>n$, we define $E_k=\{1\}$). The $k$-th \emph{Alexander polynomial} $\Delta_k(M) \in \Lambda$ is the generator of the smallest principal ideal of $R$ that contains $E_k(M)$.
\end{definition}

It is known that the elementary ideal does not depend on a choice of presentation matrix and that $\Delta_k(M)$ is uniquely determined up to multiplication of units of $\Lambda$.

Let 
\[ TM = \{m \in M \: | \: f m =0 \mbox{ for some } 0 \neq f \in \Lambda\}\] be the torsion submodule of $M$. The rank of $M$ is defined by
\[ \rank(M) = \dim_{k} k\otimes M\]
where $k$ is the quotient field of $\Lambda$.
We say that $M$ is a $\Lambda$-torsion module if $M=TM$, which is equivalent to $\rank(M)=0$.

The annihilator ideal of $m \in M$ is an ideal of $\Lambda$ defined by
\[ \Ann(m)=\{f \in \Lambda \: | \: fm=0\}. \]
Similarly, the annihilator ideal of $M$ is defined by
\[ \Ann(M)= \bigcap_{m \in M} \Ann(m) = \{f \in \Lambda \: | \: fm=0 \mbox{ for all } m \in M\}. \]

The Alexander polynomial and annihilator ideals are related as follows. 
\begin{proposition}
\label{prop:Alex-Ann}
{$ $}
\begin{itemize}
\item[(i)] $\Delta_{\rank(M)+k}(M) \doteq \Delta_k(TM)$ \cite[Theorem 3.4]{Hi}.
\item[(ii)] $\sqrt{\Ann(M)} = \sqrt{E_0(M)}$ \cite[Theorem 3.1]{Hi}.
\item[(iii)] If $M=TM$ and $M$ has a square presentation matrix, then $\Ann(M)=((\Delta_0(M)\slash \Delta_1(M))$ \cite[Corollary 3.4.1]{Hi}
\end{itemize}
Here $\sqrt{I}:=\{g \in \Lambda \: | \: g^{n} \in \Lambda \mbox{ for some }n>0\}$ is the radical of the ideal $I$.
\end{proposition}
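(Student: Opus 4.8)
These three items are standard structural facts about elementary (Fitting) ideals over the Noetherian unique factorization domain $\Lambda$ (they are attributed to \cite{Hi}). My plan is to derive all three from two tools: the adjugate/Cramer identity for square presentation matrices, and localization at the height-one primes of $\Lambda$ — equivalently at the irreducible elements, using crucially that $\Lambda$ is a UFD, so each $\Delta_k$ is literally the $\gcd$ of the generators of $E_k$ and divisibility of elements is detected by the valuations $v_p$ at all irreducibles $p$. I would prove (ii) first since it is the most elementary, build (iii) on top of the adjugate identity, and finally obtain (i) from the short exact sequence $0 \to TM \to M \to M/TM \to 0$.

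For (ii), fix a presentation $\Lambda^{m}\stackrel{A}{\to}\Lambda^{n}\to M\to 0$ and prove the two inclusions $E_0(M)\subseteq \Ann(M)$ and $\Ann(M)^{n}\subseteq E_0(M)$; taking radicals then gives the claim. The first comes from Cramer's rule: any $n\times n$ submatrix $B$ of $A$ has columns that are relations, so $\det(B)$ kills every generator of $M$, and such determinants generate $E_0(M)$. For the second, given $a_1,\ldots,a_n\in\Ann(M)$, each column of $\mathrm{diag}(a_1,\ldots,a_n)$ is a relation, so $\mathrm{diag}(a_1,\ldots,a_n)=AC$ for some $C$ over $\Lambda$; Cauchy–Binet then writes $a_1\cdots a_n=\det\mathrm{diag}(a_i)$ as a $\Lambda$-combination of maximal minors of $A$, i.e. an element of $E_0(M)$.

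For (iii), since $M=TM$ has a square $n\times n$ presentation matrix $A$ we have $E_0(M)=(\det A)$, so $\Delta_0\doteq\det A$, while $\Delta_1=\gcd$ of the $(n-1)$-minors divides $\det A$ by cofactor expansion, so $\Delta_0/\Delta_1\in\Lambda$ is well defined. Every entry of $\mathrm{adj}(A)$ is an $(n-1)$-minor up to sign, hence divisible by $\Delta_1$, so $\mathrm{adj}(A)=\Delta_1 B$ with $B$ over $\Lambda$; cancelling $\Delta_1$ in the identity $A\,\mathrm{adj}(A)=(\det A)I$ gives $AB\doteq(\Delta_0/\Delta_1)I$, so each $(\Delta_0/\Delta_1)e_i$ lies in $\mathrm{im}(A)$ and $(\Delta_0/\Delta_1)\subseteq\Ann(M)$. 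The reverse inclusion is the main obstacle. Here I would localize at each irreducible $p$, where $\Lambda_{(p)}$ is a DVR, put $A$ into Smith normal form $\mathrm{diag}(p^{d_1},\ldots,p^{d_n})$, and read off $v_p(\Delta_0)=\sum_i d_i$, $v_p(\Delta_1)=\sum_i d_i-\max_i d_i$, together with $v_p(\Ann(M))=\max_i d_i$ (the latter from $M_{(p)}\cong\bigoplus_i\Lambda_{(p)}/(p^{d_i})$ and the fact that annihilators commute with localization for finitely generated modules). Thus $v_p(\Delta_0/\Delta_1)=v_p(\Ann(M))$ at every irreducible $p$, so any $f\in\Ann(M)$ satisfies $v_p(f)\geq v_p(\Delta_0/\Delta_1)$ for all $p$ and is therefore divisible by $\Delta_0/\Delta_1$ in the UFD $\Lambda$, giving the equality.

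Finally, for (i) I would use the exact sequence $0\to TM\to M\to M/TM\to 0$ with $M/TM$ torsion-free of rank $r=\rank(M)$. Splicing presentations of $TM$ and $M/TM$ produces a presentation of $M$ whose matrix is block-triangular; the torsion-free quotient contributes a free summand of rank $r$ that shifts the index, so that $E_{r+k}(M)$ and $E_k(TM)$ have the same $\gcd$ of generators up to units, i.e. $\Delta_{r+k}(M)\doteq\Delta_k(TM)$. The delicate point is controlling the off-diagonal block coming from the extension; I expect to handle this once more by localizing — at the generic point $(0)$ to pin down the rank and split off the free part, and at the height-one primes to compare the resulting polynomials, exactly as in the valuation argument used for (iii).
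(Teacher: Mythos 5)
The paper offers no proof of this proposition at all: each item is quoted directly from Hillman's book \cite{Hi} (Theorems 3.1, 3.4 and Corollary 3.4.1), so your self-contained derivation is by necessity a different route, and it is essentially sound. Your arguments for (ii) (Cramer's rule for $E_0(M)\subseteq\Ann(M)$, then $\mathrm{diag}(a_1,\ldots,a_n)=AC$ plus Cauchy--Binet for $\Ann(M)^{n}\subseteq E_0(M)$) and for (iii) (the adjugate identity giving $(\Delta_0/\Delta_1)\subseteq\Ann(M)$, and comparison of valuations at height-one primes via Smith normal form over the DVR $\Lambda_{(p)}$ for the reverse inclusion) are complete and correct, and are in substance the standard proofs. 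The one weak spot is (i): the splicing claim as you state it is not right, since $M/TM$ is torsion-free but in general not free over $\Lambda$ (already for $s=1$, as $\Z[t^{\pm 1}]$ is not a PID), so it does not ``contribute a free summand of rank $r$,'' and the off-diagonal block of a spliced presentation cannot simply be discarded. However, the fallback you name suffices on its own and makes splicing unnecessary: localize at each height-one prime $p$; then $\Lambda_{(p)}$ is a DVR, torsion commutes with localization, so the sequence $0\to (TM)_{(p)}\to M_{(p)}\to (M/TM)_{(p)}\to 0$ splits with $(M/TM)_{(p)}$ free of rank $r=\rank M$; since Fitting ideals commute with base change and $E_{k+r}(N\oplus\Lambda_{(p)}^{r})=E_k(N)$, you get $E_{r+k}(M)\Lambda_{(p)}=E_k(TM)\Lambda_{(p)}$, hence $v_p(\Delta_{r+k}(M))=v_p(\Delta_k(TM))$ at every irreducible $p$, which in the UFD $\Lambda$ gives $\Delta_{r+k}(M)\doteq\Delta_k(TM)$. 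With (i) rewritten in that form, your proposal is a correct, self-contained proof of the facts the paper merely cites.
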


We will use the following result later.

\begin{lemma}\cite[Theorem 3.12 (3)]{Hi}
\label{lemma:div}
Let $\Phi: M \rightarrow N$ be a homomorphism of $\Lambda$-modules.
If $\Phi|_{TM}:TM \rightarrow TN$ is a surjection, then $\Delta_{0}(TN)$ divides $\Delta_0(TM)$.
\end{lemma}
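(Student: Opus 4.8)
The plan is to reduce the statement to the standard behaviour of the order ideal $\Delta_0$ under short exact sequences of torsion modules, and in fact to isolate only the divisibility half that is actually needed. Since $\Phi|_{TM}\colon TM \to TN$ is surjective, I set $K = \Ker(\Phi|_{TM})$, which is a submodule of the torsion module $TM$ and hence itself a finitely generated torsion $\Lambda$-module (here one uses that $\Lambda$ is Noetherian, so that submodules and quotients of finitely generated modules are again finitely presented). This produces a short exact sequence $0 \to K \to TM \to TN \to 0$ of finitely generated torsion $\Lambda$-modules, and it suffices to show that $\Delta_0(TN)$ divides $\Delta_0(TM)$ for such a sequence.

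First I would assemble compatible presentations. Choosing finite presentations $\Lambda^{q}\xrightarrow{\alpha}\Lambda^{m}\to K \to 0$ and $\Lambda^{r}\xrightarrow{\gamma}\Lambda^{n}\to TN \to 0$, I lift the chosen generators of $TN$ into $TM$; together with the images of the generators of $K$ these generate $TM$. The relations of $TM$ split into two families: the relations of $K$, and the lifts of the relations of $TN$, each of which maps to $0$ in $TN$ and hence lies in $K$, so is expressed through the generators of $K$ by some matrix $\beta$. This yields a block presentation matrix
\[
M_B = \begin{pmatrix} \alpha & -\beta \\ 0 & \gamma \end{pmatrix}
\]
for $TM$, of size $(m+n)\times(q+r)$, in which crucially the lower-left block vanishes. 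Because all three modules are torsion, each presentation has at least as many relations as generators (the presentation map has full rank over the fraction field of $\Lambda$), so $\Delta_0$ of each is the gcd of its maximal minors: $\Delta_0(K)$ from the $m$-minors of $\alpha$, $\Delta_0(TN)$ from the $n$-minors of $\gamma$, and $\Delta_0(TM)$ from the $(m+n)$-minors of $M_B$.

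The key computation is then a Laplace expansion. For any choice of $m+n$ columns of $M_B$, I expand the resulting maximal minor along the bottom $n$ rows, which form the block $(0 \mid \gamma)$. Every nonvanishing term of this expansion must draw its $n$ bottom-row entries from the $\gamma$-columns, so each such term carries as a factor an $n\times n$ minor of $\gamma$, which is divisible by $\Delta_0(TN)$. Hence every $(m+n)$-minor of $M_B$ is divisible by $\Delta_0(TN)$, and therefore so is their gcd $\Delta_0(TM)$, giving $\Delta_0(TN)\mid\Delta_0(TM)$ as claimed. (Expanding symmetrically along the $\alpha$-columns would in fact recover the full multiplicative relation $\Delta_0(TM)\doteq\Delta_0(K)\,\Delta_0(TN)$, but only the one-sided divisibility is needed, and it falls out of the block-triangular shape alone, independently of the cross term $\beta$.)

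The main obstacle I anticipate is bookkeeping rather than conceptual: one must set up the block presentation carefully enough that the three orders are genuinely read off as gcd's of maximal minors, which relies on the torsion hypothesis to guarantee the correct shape (at least as many columns as rows, with maximal rank over the fraction field). One should also invoke the fact, already built into the definition of $\Delta_0$ through the elementary ideals, that $\Delta_0$ is independent of the chosen presentation, so that this particular block presentation may be used without loss of generality.
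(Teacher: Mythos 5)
Your proof is correct, but it takes a genuinely different route from the paper for the simple reason that the paper contains no proof at all: the lemma is imported verbatim from Hillman's book (the citation \cite[Theorem 3.12 (3)]{Hi}), where it arises from the study of how order ideals behave in short exact sequences of torsion modules. Your argument is a clean, self-contained replacement: form $K=\Ker(\Phi|_{TM})$, take the horseshoe presentation of the extension $0\to K\to TM\to TN\to 0$ with block matrix $\left(\begin{smallmatrix} \alpha & -\beta \\ 0 & \gamma \end{smallmatrix}\right)$, and observe via generalized Laplace expansion along the bottom rows that every maximal minor of this matrix is a $\Lambda$-linear combination of maximal minors of $\gamma$, hence divisible by $\Delta_0(TN)$; since $\Lambda$ is a UFD, the gcd $\Delta_0(TM)$ is then divisible by $\Delta_0(TN)$ as well. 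The torsion hypothesis enters exactly where you say it does (it guarantees maximal minors exist and that $\Delta_0$ is their gcd), and Noetherianity of $\Lambda$ justifies finite presentation of $K$, $TM$, $TN$. What the citation buys the paper is brevity; what your argument buys is independence from the reference, using nothing beyond the presentation-independence of elementary ideals. Two caveats, neither fatal. First, you should record why your two families of columns generate \emph{all} relations of $TM$: given a relation $(a,b)$ among the lifted generators, its image in $TN$ forces $b=\gamma c$ for some $c$, and subtracting the corresponding column combination leaves a relation supported on $K$, which by injectivity of $K\to TM$ is a combination of $\alpha$-columns. This standard step is what makes the block matrix a presentation rather than merely a list of relations. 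Second, your closing parenthetical overreaches: the same expansion does not recover the full multiplicativity $\Delta_0(TM)\doteq\Delta_0(K)\,\Delta_0(TN)$, because the complementary $m\times m$ minors in the expansion involve the $\beta$-columns and need not be divisible by $\Delta_0(K)$; only the one-sided divisibility --- which is all you use --- falls out of the triangular shape alone.
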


\subsection{Alexander tuples}

Let $N$ be a normal subgroup of a group $G$ such that its quotient group $G\slash N =\Z^{s}$ for some $s\geq 0$. The quotient group $G\slash N=\Z^{s}$ acts on the $1$st homology group $H_1(N;\Z)=N\slash [N,N]$ by conjugation. Hence $N\slash[N,N]$ has a structure of $\Lambda:=\Z[\Z^{s}]$-module. This $\Lambda$-module is called the \emph{Alexander module} and its Alexander polynomial is called the \emph{Alexander polynomial} of a group $G$. As we already mentioned, we regard them as an invariant of a pair $(G,N)$ of a group $G$ and its normal subgroup $N$ with $G\slash N = \Z^{s}$.

We slightly extend this construction.

\begin{definition}[Alexander tuple]
\label{def:Alex-tuple}
Let $G$ be a group and $X,N,H$ be normal subgroups of the group $G$.
We say that a tuple $\mathcal{A}=(G;(X,N,H))$ is an \emph{Alexander tuples} if they satisfy the following conditions.
\begin{itemize}
\item[(a)] $[H,N] \subset X \subset N \subset H$
\item[(b)] The quotient group $G\slash H$ is the free abelian group $\Z^{s}$ for $s \geq 0$.
\end{itemize}
\end{definition}

For an Alexander tuple $\mathcal{A}=(G;(X,N,H))$, we put 
\[ \Lambda=\Lambda_{\mathcal{A}} = \Z[G\slash H]= \Z[t_1^{\pm 1}, \ldots, t_s^{\pm 1}] \]
and
\[ M=M_{\mathcal{A}} = N\slash X \]
Since $[N,N] \subset [H,N] \subset X$, $M$ is an abelian group.
The group $G$ acts on $M$ by conjugation because $X$ and $N$ are normal. Furthermore, $[H,N] \subset X$ implies that the conjugation action of $H$ on $M$ is trivial. Thus the quotient group $G \slash H = \Z^s$ acts on $M$ hence $M$ is a $\Lambda$-module. 

\begin{definition}[Alexander module and polynomials of the Alexander tuple]
We call the $\Lambda$-module $M=M_{\mathcal{A}}$ \emph{the Alexander module} of the Alexander tuple $\mathcal{A}=(G;(X,N,H))$. 
We call the $0$-th Alexander polynomial $\Delta_{0}(TM_{\mathcal{A}})$ the \emph{Alexander polynomial} of the Alexander tuple $\mathcal{A}$ and denote by $\Delta_{\mathcal{A}}(\bt)$.
\end{definition}

The Alexander polynomial is usually used as an invariant of knots (and links) in the following manner.

\begin{example}[Alexander polynomial of a knot]
\label{example:A-knot}
Let $K$ be a knot in $S^{3}$ and $G=G(K)=\pi_1(S^{3} \setminus K)$ be the knot group, the fundamental group of its complement.
The $0$th Alexander polynomial of the Alexander tuple $(G;([[G,G],[G,G]],[G,G],[G,G]))$ is called the Alexander polynomial of a knot $K$ denoted by $\Delta_K(t)$. 
\end{example}

\subsection{Alexander polynomial and generalized torsion equation spectrum}

To state our theorem we introduce a notion of generalized torsion equation spectrum for an element of $\Lambda$-modules.

\begin{definition}
For a $\Lambda$-module $M$ and $m \in M$, the \emph{generalized torsion equation spectrum} of $m$ by
\[ t(m) = \{ \varepsilon(f(\bt))\: | \: f \in Ann(m) \mbox{ and } f \mbox{ is a positive element} \}\]

Here we say that an element $f(\bt)= \sum_{I}a_I \bt^{I} \in \Lambda$ is \emph{positive} if $f(\bt) \neq 0$ and $a_I \geq 0$ for all $I$. 
\end{definition}

For the Alexander module of Alexander tuples, the generalized torsion equation spectrum is nothing but the generalized torsion equation spectrum of suitable quotient group.

\begin{lemma}
\label{lemma:t-for-Amod}
Let $\mathcal{A}=(G;(X,N,H))$ be an Alexander tuple and $M_{\mathcal{A}}$ be its Alexander module. Then for $g \in N$, $t(p(g))=t_{G \slash X}(q(g))$ where $p:N \rightarrow M_{\mathcal{A}}=N\slash X$ and $q:G\rightarrow G\slash X$ are the quotient maps.
\end{lemma}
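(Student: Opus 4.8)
The plan is to unwind both definitions and exhibit an explicit dictionary between solutions of the generalized torsion equation in $G/X$ and positive elements of the annihilator ideal $\Ann(p(g))$, matching the number of conjugates with the augmentation $\varepsilon(f)$. First I would record how the $\Lambda$-action is realized at the group level. Since $g\in N$ and $N$ is normal, each conjugate $g^{x}=xgx^{-1}$ again lies in $N$, so its class $p(g^{x})=g^{x}X$ lies in $M=N/X$. Because $[H,N]\subset X$, the conjugation action of $H$ on $M$ is trivial, so $p(g^{x})$ depends only on the coset $xH\in G/H=\Z^{s}$; writing $\bt^{I}$ for the image of $x$ in $G/H$, this is precisely the identity $\bt^{I}\cdot p(g)=p(g^{x})$ in the $\Lambda$-module $M$. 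Moreover, since $[N,N]\subset X$, the subgroup $N/X$ of $G/X$ is abelian, so for $g\in N$ any product $q(g)^{y_1}\cdots q(g)^{y_n}$ with $y_i=q(x_i)$ can be computed additively in $M$ and equals $\sum_{i=1}^{n}p(g^{x_i})$.

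Next I would set up the two directions of the correspondence. Given a solution $q(g)^{y_1}\cdots q(g)^{y_n}=1$ in $G/X$, I lift each $y_i$ to $x_i\in G$, let $\bt^{I_i}$ be the image of $x_i$ in $G/H$, and collect equal monomials: if $\bt^{I}$ occurs $a_I$ times, set $f=\sum_{I}a_I\bt^{I}$. Then $f$ is positive, $\varepsilon(f)=\sum_{I}a_I=n$, and by the above $f\cdot p(g)=\sum_{i}p(g^{x_i})=0$, so $f\in\Ann(p(g))$ and $n\in t(p(g))$. Conversely, given a positive $f=\sum_{I}a_I\bt^{I}\in\Ann(p(g))$, I choose for each $I$ a lift $x_I\in G$ of $\bt^{I}$ (possible since $G\to G/H$ is onto) and form the product consisting of $a_I$ copies of $q(g)^{q(x_I)}$ for each $I$; this is a product of $n=\varepsilon(f)$ conjugates of $q(g)$ whose value in $M$ is $f\cdot p(g)=0$, hence equals $1$ in $G/X$, giving $n\in t_{G/X}(q(g))$. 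These two constructions yield $t(p(g))=t_{G/X}(q(g))$.

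The only real subtlety, and the step I would treat most carefully, is the passage between ``a product of $n$ conjugates'' and ``a single polynomial $f$ with $\varepsilon(f)=n$''. The point is that the order of the factors is irrelevant, since they all lie in the abelian group $N/X$, and that two conjugating elements with the same image in $G/H$ determine the same element of $M$; together these let me group the conjugates by their $\Z^{s}$-coset without creating or destroying factors, so that the total count $n$ is preserved exactly by $\varepsilon$. Everything else is a direct translation between multiplicative notation in $G/X$ and additive ($\Lambda$-module) notation in $M$, and one checks only the trivial edge case separately by restricting attention to $n\geq 1$ (a positive $f$ has $\varepsilon(f)\geq 1$).
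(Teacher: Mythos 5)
Your proposal is correct and follows essentially the same route as the paper's proof: both translate the generalized torsion equation in $G/X$ into the module equation $\bigl(\sum_{i}\phi(x_i)\bigr)p(g)=0$ in $M_{\mathcal{A}}$, using that $N/X$ is abelian and that the conjugation action factors through $G/H$, and then identify positive elements $f\in\Ann(p(g))$ with $\varepsilon(f)=n$ as exactly the sums of $n$ monomials arising this way. Your write-up merely makes explicit the lifting and monomial-collecting steps that the paper leaves implicit.
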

\begin{proof}
By definition, $n \in t_{G\slash X}(q(g))$ if and only if there exists $g_1,\ldots,g_n \in G$ such that 
\[ g^{g_1}\cdots g^{g_n} \in X\]
Since $g \in N$, by taking a projection map $p: N \rightarrow M_{\alpha}$ it is equivalent to
\[ p(g^{g_1}) \cdots p(g^{g_n}) = \left(\sum_{i=1}^{n}\phi(g_i)\right)p(g) = 0 \in M_{\alpha} = N \slash X,\]
where $\phi:G \rightarrow G\slash H \subset \Lambda$ is the projection map.
Therefore $n \in t_{G\slash X}(q(g))$ if and only if $n \in t(p(g))$.
\end{proof}

\begin{definition}
For an irreducible element $h=h(\bt) \in \Lambda$, we define 
\[ t(h)=t(h(\bt)) = \{ \varepsilon(f(\bt))\: | \: f(\bt) \in (h(\bt)), f \mbox{ is a positive element} \}\]
\end{definition}

The set $t(h)$ for the case $s=1$ will be studied in the next section. Now we are ready to prove the main theorem stated in the introduction.

\begin{theorem}
\label{theorem:main}
Let $\mathcal{A}=(G;(X,N,H))$ be an Alexander tuple. For an element $g \in N$, if $g \not \in X$, then there exists a irreducible factor $h(\bt)$ of $\Delta_\mathcal{A}(\bt)$ such that
\[ t(g) \subset t(h(\bt)) \]
\end{theorem}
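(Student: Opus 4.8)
The plan is to push the whole statement down to the Alexander module $M=M_{\mathcal A}=N/X$ and then to extract the required factor from the unique factorisation of $\Lambda$. First I would apply Monotonicity (Lemma \ref{lemma:monotonicity}) to the quotient homomorphism $q\colon G\to G/X$ to get $t(g)=t_G(g)\subset t_{G/X}(q(g))$, and then rewrite the right-hand side as $t(m)$ with $m=p(g)\in M$ using Lemma \ref{lemma:t-for-Amod}. Since $g\notin X$ the element $m=p(g)$ is non-zero in $M$. Hence the theorem reduces to the purely module-theoretic claim that some irreducible factor $h$ of $\Delta_{\mathcal A}=\Delta_0(TM)$ satisfies $t(m)\subset t(h)$.

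The mechanism I would use for the inclusion is the observation that $t(m)\subset t(h)$ holds whenever $\Ann(m)\subset(h)$: every positive $f\in\Ann(m)$ is then a positive element of the principal ideal $(h)$, so $\varepsilon(f)\in t(h)$. Thus it suffices to find an irreducible $h$ that divides $\Delta_{\mathcal A}$ and also divides every element of $\Ann(m)$, i.e.\ $h\mid\gcd\Ann(m)$. To locate such an $h$ I would analyse the cyclic submodule $\Lambda m\cong\Lambda/\Ann(m)$. If $m$ is not $\Lambda$-torsion then $\Ann(m)=0$, so $t(m)=\emptyset$ and there is nothing to prove; otherwise $\Lambda m$ is a non-zero torsion module, and reading $\Delta_0$ off the presentation of $\Lambda/\Ann(m)$ by generators of $\Ann(m)$ together with unique factorisation in $\Lambda$ gives $\Delta_0(\Lambda m)\doteq\gcd\Ann(m)=:\delta$.

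Assuming $\delta$ is a non-unit, choose any irreducible factor $h$ of $\delta$. Then $h\mid\gcd\Ann(m)$ forces $\Ann(m)\subset(h)$ and hence $t(m)\subset t(h)$. It remains to see $h\mid\Delta_{\mathcal A}$. Since $(h)$ is a height-one (principal) prime containing $\Ann(m)=\Ann(\Lambda m)$, the localisation $(\Lambda m)_{(h)}$ is non-zero, so $(TM)_{(h)}\supset(\Lambda m)_{(h)}\neq 0$; as divisibility of $\Delta_0(TM)$ by $h$ is detected exactly by non-vanishing of $(TM)_{(h)}$ over the discrete valuation ring $\Lambda_{(h)}$, this yields $h\mid\Delta_0(TM)=\Delta_{\mathcal A}$. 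Alternatively one can avoid localisation and feed the short exact sequence $0\to\Lambda m\to TM\to TM/\Lambda m\to 0$ into the divisibility property of $\Delta_0$ recorded in Lemma \ref{lemma:div}, concluding $\Delta_0(\Lambda m)\mid\Delta_{\mathcal A}$ and taking $h$ to be a factor of $\Delta_0(\Lambda m)$.

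The step I expect to be delicate is precisely the hypothesis that $\delta=\gcd\Ann(m)$ is a non-unit, so that an irreducible factor is actually available. Over a Laurent ring with field coefficients this is automatic, but over $\Lambda=\Z[t_1^{\pm1},\ldots,t_s^{\pm1}]$ the cyclic module $\Lambda/\Ann(m)$ may be supported in codimension $\ge 2$---for instance when $\Ann(m)=(t-1,2)$, where $\delta\doteq 1$---so that $\Delta_0(\Lambda m)$ carries no factor at all. Handling this finite-support situation, either by showing it is incompatible with $m$ arising from a generalized torsion equation in the tuple $\mathcal A$ or by imposing the mild non-degeneracy on $\Delta_{\mathcal A}$ that excludes it, is the point I would treat most carefully before extracting $h$.
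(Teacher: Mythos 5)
Your argument is, in substance, the paper's own proof: the same reduction to $m=p(g)\in M_{\mathcal A}$ via Lemma \ref{lemma:monotonicity} and Lemma \ref{lemma:t-for-Amod}, the same mechanism $\Ann(m)\subset(h)\Rightarrow t(m)\subset t(h)$, and the same extraction of $h$ from the cyclic module $\Lambda m$ followed by divisibility into $\Delta_0(TM)$. Your identity $\Delta_0(\Lambda m)\doteq\gcd\Ann(m)$ is the paper's chain $\Ann(m)=\Ann(\Lambda m)\subset\sqrt{E_0(\Lambda m)}\subset\sqrt{(\Delta_0(\Lambda m))}$ (Proposition \ref{prop:Alex-Ann}) in different clothing, and your localisation argument correctly supplies $h\mid\Delta_{\mathcal A}$, which the paper encodes in the right-hand column of its inclusion diagram. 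One local correction: your ``alternative'' route misapplies Lemma \ref{lemma:div}, which concerns surjections; fed the sequence $0\to\Lambda m\to TM\to TM/\Lambda m\to 0$ it yields only $\Delta_0(TM/\Lambda m)\mid\Delta_0(TM)$, not $\Delta_0(\Lambda m)\mid\Delta_0(TM)$, so keep the localisation version (or invoke multiplicativity of $\Delta_0$ on short exact sequences of torsion modules, which the paper does not quote).

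The step you declined to claim is a genuine gap, and it is exactly where the paper's proof breaks. The paper disposes of it with the sentence that, since $g\notin X$, $m\neq0$, ``thus $\sqrt{(\Delta_0(\Lambda m))}$ is not the whole $\Lambda$''; that implication is false for precisely your reason: $m\neq0$ only gives $\Ann(m)\neq\Lambda$, and $\Ann(m)=(2,t-1)$ gives $\Delta_0(\Lambda m)\doteq1$. Worse, the bad case is realized by honest Alexander tuples, so the theorem as stated is false, not merely unproved: take $G=\Z\times\Z/2=\langle t\rangle\times\langle a\rangle$ and $\mathcal A=(G;(\{1\},\Z/2,\Z/2))$, which satisfies Definition \ref{def:Alex-tuple}; then $M_{\mathcal A}=\Lambda/(2,t-1)$, so $\Delta_{\mathcal A}\doteq1$ has no irreducible factor at all, yet $g=a\notin X$ has $t(g)=2\N\neq\emptyset$. (A torsion-free variant with $s=2$ shows $\Z$-torsion is not the only culprit: $G=\Z\rtimes\Z^2$ with both generators of $\Z^2$ inverting $\Z=\langle a\rangle$, $X=\{1\}$, $N=H=\langle a\rangle$, where $\Ann(p(a))=(1+t_1,1+t_2)$ has unit gcd and $2\in t(a)$.) So neither your argument nor the paper's can be completed without an added hypothesis forcing $\gcd\Ann(m)$ to be a non-unit --- for instance that every associated prime of $TM_{\mathcal A}$ has height one, or, when $s=1$, that $M_{\mathcal A}$ has no $\Z$-torsion (two coprime elements of $\Ann(m)$ would place a nonzero integer in $\Ann(m)\cap\Z$). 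The latter holds for the classical knot Alexander module, so the paper's applications such as Corollary \ref{cor:tg-knot} survive this repair, but the general statement does not.
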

\begin{proof}
If $t(g) = \emptyset$ we have nothing to prove so we assume that $t(g) \neq \emptyset$. We put $m=p(g)$ where $p: N \rightarrow M=N \slash X$ is the quotient map. 
By Lemma, $t(g) \subset t(m)$ hence $t(m) \neq \emptyset$. In particular, $m \in TM$.

Let $\Lambda m$ be the sub $\Lambda$-module of $TM$ generated by $m$. By Proposition \ref{prop:Alex-Ann},
\[
\begin{matrix}
\Ann(m) & = & \Ann(\Lambda m) & \subset & \sqrt{\Ann(\Lambda m)} & = & \sqrt{E_0(\Lambda m)} & \subset& \sqrt{(\Delta_{0}(\Lambda m))} \\
& & \cup & & \cup & & \cup & & \cup\\
& & \Ann(TM) & \subset & \sqrt{\Ann(TM)} & = & \sqrt{E_0(TM)} & \subset &  \sqrt{(\Delta_{0}(TM))} \\
& & & & & & \cup & &\cup \\
& & & & & & E_0(TM) & \subset & (\Delta_0(TM))
\end{matrix}
\]
Thus $\sqrt{(\Delta_{0}(\Lambda m))}$ is a principal ideal that contains $(\Delta_0(TM))$. Since we are assuming $g \not \in X$, $m=p(g)\neq 0$. Thus  $\sqrt{(\Delta_{0}(\Lambda m))}$ is not the whole $\Lambda$. 
Therefore there exists a non-trivial irreducible factor $h(\bt)$ of $\Delta_0(TM)$ such that 
\[ \Ann(m) \subset \sqrt{(\Delta_{0}(\Lambda m))} \subset (h(\bt)) \]
hence
\[ t(g) \subset t(m) \subset t(h(\bt)) \]
\end{proof}

Although we are using the Alexander polynomial $\Delta_0(TM)$, if we know the structure of the annihilator ideals we can often improve the theorem as the next example shows.

\begin{remark}
If $M=TM$ and $M$ has a square presentation matrix
\[ t(g) \subset t(h(\bt)) \]
for some irreducible factor $h(\bt)$ of $\lambda_0(M)=\Delta_0(M_{\mathcal A}) \slash \Delta_1(M_{\mathcal A})$, because $\Ann(M) =(\lambda_0(M))$ by Proposition \ref{prop:Alex-Ann} (iii).
\end{remark}

For normal subgroups $N$ and $H$ of $G$ such that $N \subset H$, the \emph{$H$-lower central series} of a normal subgroup $N$ 
\[ \gamma^{H}_0 N \supset \gamma^{H}_1 N \supset \cdots \supset \gamma^{H}_k N \supset \gamma^{H}_{k+1} N \supset \cdots \]
by 
$\gamma^{H}_0 N = N$ and $\gamma^{H}_{k+1} N = [H,\gamma^{H}_{k}N]$. When $N=H$, this is the usual lower central series of $H$. We put $\gamma^{H}_{\infty} N = \bigcap_{k\geq 0} \gamma^{H}_k N$. Then iterated use of Theorem \ref{theorem:main} for the Alexander tuple $(G;(\gamma^{H}_{k+1}N,\gamma^{H}_k N, H))$ gives the following.

\begin{corollary}
\label{cor:Alex-H-series}
Let $N$ and $H$ be normal subgroups of a group $G$ that satisfy the conditions
\begin{itemize}
\item[(a)] $N \subset H$.
\item[(b)] The quotient group $G\slash H$ is the free abelian group $\Z^{s}$ for $s \geq 0$.
\end{itemize}
For $k>0$, let $\mathcal{A}_k= (G;(\gamma^{H}_{k+1}N,\gamma^{H}_kN, H))$ be an Alexander tuple.
If $g \in \Gamma^{H}_k N$ and $g \not \in \Gamma^{H}_{k+1} N$, then there exists an ireducible factor $h(\bt)$ of $\Delta_{\mathcal{A}_k}(\bt)$ such that
\[ t(g) \subset t(h(\bt)) \]
\end{corollary}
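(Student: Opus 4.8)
The plan is to obtain this as a direct consequence of Theorem \ref{theorem:main}: once we know that each $\mathcal{A}_k=(G;(\gamma^H_{k+1}N,\gamma^H_k N,H))$ is a genuine Alexander tuple, the statement is literally Theorem \ref{theorem:main} applied to $\mathcal{A}_k$, with $\gamma^H_{k+1}N$ playing the role of $X$ and $\gamma^H_k N$ the role of the middle group $N$. So essentially all the work is to verify the defining conditions (a) and (b) of Definition \ref{def:Alex-tuple} for $\mathcal{A}_k$. Condition (b) is immediate, since $G\slash H=\Z^s$ by hypothesis and this is shared by every member of the family $\{\mathcal{A}_k\}_k$.

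For condition (a) I would first check that all three groups are normal in $G$. Since $N$ and $H$ are normal and the mutual commutator subgroup $[A,B]$ of two normal subgroups of $G$ is again normal in $G$, an induction on $k$ using $\gamma^H_{k+1}N=[H,\gamma^H_k N]$ shows that $\gamma^H_k N$ is normal in $G$ for every $k$. Next I would establish the inclusion chain $[H,\gamma^H_k N]\subset \gamma^H_{k+1}N\subset \gamma^H_k N\subset H$. The first inclusion is in fact an equality, by the very definition of the $H$-lower central series; the middle inclusion is its monotonicity. For the last inclusion, $\gamma^H_0 N=N\subset H$ by hypothesis, and for $k\geq 1$ one has $\gamma^H_k N=[H,\gamma^H_{k-1}N]\subset[H,H]\subset H$, so a one-line induction gives $\gamma^H_k N\subset H$ for all $k$. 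This verifies (a), so $\mathcal{A}_k$ is an Alexander tuple and $\Delta_{\mathcal{A}_k}(\bt)$ is defined.

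With this in hand, the conclusion is immediate: the hypotheses $g\in\gamma^H_k N$ and $g\notin\gamma^H_{k+1}N$ are exactly the hypotheses ``$g\in N$ and $g\notin X$'' of Theorem \ref{theorem:main} for the tuple $\mathcal{A}_k$, so there is an irreducible factor $h(\bt)$ of $\Delta_{\mathcal{A}_k}(\bt)$ with $t(g)\subset t(h(\bt))$. I do not expect any genuine obstacle here: the entire module-theoretic content is already packaged inside Theorem \ref{theorem:main}, and what remains is the routine bookkeeping of the filtration. The one point deserving care is simply ensuring that the $H$-lower central series enjoys the closure and inclusion properties above for every index $k$, so that each $\mathcal{A}_k$ really satisfies Definition \ref{def:Alex-tuple}. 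Beyond that, the ``iteration'' alluded to before the statement is only the observation that the same argument applies uniformly across the whole family $\{\mathcal{A}_k\}_k$, the relevant index $k$ being selected by the filtration degree of the given element $g$.
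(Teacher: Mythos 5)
Your proposal is correct and matches the paper's approach: the paper offers no separate argument for this corollary, stating only that it follows by iterated application of Theorem \ref{theorem:main} to the tuples $\mathcal{A}_k$, which is exactly what you do. Your verification that each $\mathcal{A}_k$ satisfies Definition \ref{def:Alex-tuple} (normality of the $\gamma^H_k N$ via induction, the chain $[H,\gamma^H_k N]=\gamma^H_{k+1}N\subset\gamma^H_k N\subset H$) is the routine bookkeeping the paper leaves implicit, and it is carried out correctly.
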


We give the simplest application, the case $H=G$. For a prime number $p$, a group $G$ is \emph{residually finite $p$} if for every non-trivial $g \in G$, there exists a surjection $f:G \rightarrow Q$ to a finite $p$-group $Q$ such that $f(g) \neq 1$.

\begin{corollary}
If $G$ is a residually finite $p$-group, then for every non-trivial $g \in G$, $t(g) \subset p\N$.  
\end{corollary}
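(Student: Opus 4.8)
The plan is to reduce the statement to finite $p$-groups via monotonicity and then invoke Corollary \ref{cor:Alex-H-series} in the degenerate case $s=0$. First I would fix a non-trivial $g \in G$ and, using the definition of residually finite $p$, choose a surjection $f : G \to Q$ onto a finite $p$-group $Q$ with $h := f(g) \neq 1$. By Monotonicity (Lemma \ref{lemma:monotonicity}) we have $t_G(g) \subset t_Q(h)$, so it suffices to prove $t_Q(h) \subset p\N$ for an arbitrary non-trivial element $h$ of a finite $p$-group $Q$. This reduction is the crucial move: for an infinite residually-$p$ group the lower central quotients need not themselves be $p$-groups, whereas for the finite quotient $Q$ they automatically are.

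Next I would apply Corollary \ref{cor:Alex-H-series} to the group $Q$ with $H = N = Q$. Since $Q\slash Q$ is trivial we are in the case $s = 0$, so $\Lambda = \Z$ and the relevant filtration is the ordinary lower central series $Q = \gamma_1 Q \supset \gamma_2 Q \supset \cdots$. Because a finite $p$-group is nilpotent, this series reaches $\{1\}$, so there is a $k$ with $h \in \gamma_k Q \setminus \gamma_{k+1} Q$. The corollary then produces an irreducible factor $h(\bt)$ of $\Delta_{\mathcal{A}_k}(\bt)$ with $t_Q(h) \subset t(h(\bt))$, where $\mathcal{A}_k = (Q;(\gamma_{k+1}Q, \gamma_k Q, Q))$ and the Alexander module is $M = \gamma_k Q \slash \gamma_{k+1} Q$.

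The final step is to identify this factor. Since $M$ is a finite abelian $p$-group we have $TM = M$, and over $\Lambda = \Z$ a diagonal presentation by elementary divisors shows $\Delta_{\mathcal{A}_k} = \Delta_0(M) \doteq |M|$, a power of $p$; hence its only irreducible factor is the prime $p$ itself. Unwinding the definition of $t(p)$ when $s=0$, where $\varepsilon$ is the identity on $\Z$ and \emph{positive} means a positive integer, the ideal $(p) \subset \Z$ has positive elements exactly the positive multiples of $p$, so $t(p) = p\N$. Combining the inclusions gives $t_G(g) \subset t_Q(h) \subset t(p) = p\N$, as required.

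I expect the main obstacle to be purely the bookkeeping of the degenerate case $s=0$: verifying that the Alexander-module machinery specializes correctly to ordinary finite abelian groups over $\Z$ (so that $\Delta_0$ recovers the group order up to a unit) and that the combinatorially defined set $t(p)$ really collapses to $p\N$. If one prefers to avoid this, the entire argument can be replaced by the elementary observation underlying it: conjugation acts trivially on $\gamma_k Q \slash \gamma_{k+1} Q$, so the image of $h^{x_1}\cdots h^{x_n}$ in that quotient is $n\bar h$ with $\bar h \neq 0$ of $p$-power order, which forces $p \mid n$. This is exactly the content encoded by the $s=0$ case, and gives a self-contained proof of $t_Q(h) \subset p\N$.
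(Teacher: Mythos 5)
Your proposal is correct and follows essentially the same route as the paper: reduce to finite $p$-groups by monotonicity, apply Corollary \ref{cor:Alex-H-series} with $N=H=G$ (so $\Lambda=\Z$), and use nilpotency to locate $g$ in a lower central quotient that is a finite abelian $p$-group. You additionally spell out the details the paper leaves implicit (that $\Delta_0(M)\doteq |M|$ is a $p$-power with sole irreducible factor $p$, and that $t(p)=p\N$ when $s=0$), and your closing elementary argument via the trivial conjugation action on $\gamma_k Q\slash\gamma_{k+1}Q$ is a valid self-contained shortcut.
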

\begin{proof}
By the monotonicity (Lemma \ref{lemma:monotonicity}) it is sufficient to show the assertion for finite $p$-groups. 
Let $G$ be a finite $p$-group. We apply Corollary \ref{cor:Alex-H-series} for $N=H=G$ (thus $\Lambda=\Z$). Since a finite $p$-group is nilpotent, there exists $k\geq 0$ such that $ g \in \gamma_k G$ but $g \not \in \gamma_{k+1}G$. 
Since $G$ is a finite $p$-group, $\gamma_k G \slash \gamma_{k+1}G$ is an abelian $p$-group hence $t(g) \subset p\mathbb{N}$.
\end{proof}

Finally, we give useful variant of Corollary \ref{cor:Alex-H-series} that only uses one Alexander module $M_{\mathcal{A}}$ for $\mathcal{A}=(G;([N,H],N,H))$.

For a $\Lambda$-module $M$, let $M^{\otimes k}$ be the tensor product of the $\Z$-module (i.e. abelian group) $M$. We view $M^{\otimes k}$ as a $\Lambda$-module by the diagonal action; for $\bt \in \Z^s$ and $m_1,\ldots,m_k \in M$, we define
\[ \bt(m_1\otimes m_2 \otimes \cdots \otimes m_k) = (\bt m_1\otimes \bt m_2 \otimes \cdots \otimes \bt m_k).\]

\begin{corollary}
\label{cor:Alex-var3}
Let $\mathcal{A}=(G;([N,H],N,H))$ be an Alexander tuple and $M=N \slash [N,H]$ be its Alexander module. If $M$ is a $\Lambda$-torsion module, then for every $g \in G$, if $g \not \in \gamma^{H}_{\infty} N$ then $t(g) \subset t(h(\bt))$ for some irreducible factor $h(\bt)$ of $\Delta_{0}(M^{\otimes k})$. 
\end{corollary}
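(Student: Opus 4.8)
The plan is to run the single Alexander tuple $\mathcal{A}=(G;([N,H],N,H))$ against the entire $H$-lower central filtration at once, by comparing each graded piece with a tensor power of $M$. First I would fix the index. The statement is about $g\in N=\gamma^{H}_{0}N$ (for $g\notin N$ there is no term of the filtration to grade); since $g\notin \gamma^{H}_{\infty}N=\bigcap_{k}\gamma^{H}_{k}N$, there is then a unique $k\geq 1$ with $g\in \gamma^{H}_{k-1}N\setminus\gamma^{H}_{k}N$. Applying Corollary \ref{cor:Alex-H-series} to the tuple $\mathcal{A}_{k-1}=(G;(\gamma^{H}_{k}N,\gamma^{H}_{k-1}N,H))$ produces an irreducible factor $h(\bt)$ of $\Delta_{\mathcal{A}_{k-1}}(\bt)=\Delta_{0}(T(\gamma^{H}_{k-1}N/\gamma^{H}_{k}N))$ with $t(g)\subset t(h(\bt))$. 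Thus it suffices to establish the divisibility
\[ \Delta_{0}\bigl(T(\gamma^{H}_{k-1}N/\gamma^{H}_{k}N)\bigr)\ \big|\ \Delta_{0}(M^{\otimes k}), \]
because then $h(\bt)$ is automatically an irreducible factor of $\Delta_{0}(M^{\otimes k})$, which is exactly the conclusion.

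The heart of the argument is a $\Lambda$-module surjection onto the graded piece $\gamma^{H}_{k-1}N/\gamma^{H}_{k}N$, built from the iterated commutator map $\bar{n}_{1}\otimes\cdots\otimes\bar{n}_{k}\mapsto [n_{1},[n_{2},\ldots,[n_{k-1},n_{k}]\cdots]]\bmod\gamma^{H}_{k}N$. Three points need checking. Well-definedness and multilinearity: since $[H,\gamma^{H}_{j}N]=\gamma^{H}_{j+1}N$, the group $H$ acts trivially on every quotient $\gamma^{H}_{j}N/\gamma^{H}_{j+1}N$, so the commutator descends to a biadditive pairing $\gamma^{H}_{i}N/\gamma^{H}_{i+1}N\otimes\gamma^{H}_{j}N/\gamma^{H}_{j+1}N\to\gamma^{H}_{i+j+1}N/\gamma^{H}_{i+j+2}N$; in particular, feeding a slot by an element of $[N,H]$ lands one filtration step too deep and hence vanishes, which is precisely what makes the map factor through $M=N/[N,H]$ in each slot. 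Equivariance: conjugation distributes over commutators, $g[x,y]g^{-1}=[gxg^{-1},gyg^{-1}]$, so the map intertwines the diagonal $G/H$-action on $M^{\otimes k}$ with the conjugation action on the target — this is exactly why the diagonal action is the correct $\Lambda$-structure on $M^{\otimes k}$. Surjectivity: $\gamma^{H}_{k-1}N$ is, by definition, generated by $(k-1)$-fold brackets.

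Finally I would feed this surjection into Lemma \ref{lemma:div}. Here the hypothesis that $M$ is $\Lambda$-torsion enters: it forces $M^{\otimes k}$, with the diagonal action, to be $\Lambda$-torsion as well, so that $TM^{\otimes k}=M^{\otimes k}$ and the surjection $M^{\otimes k}\twoheadrightarrow \gamma^{H}_{k-1}N/\gamma^{H}_{k}N$ restricts to a surjection of torsion submodules. Lemma \ref{lemma:div} then gives $\Delta_{0}(T(\gamma^{H}_{k-1}N/\gamma^{H}_{k}N))\mid\Delta_{0}(M^{\otimes k})$, closing the chain $t(g)\subset t(h(\bt))$ with $h$ an irreducible factor of $\Delta_{0}(M^{\otimes k})$.

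I expect the main obstacle to be the surjectivity in the second paragraph: while every slot factors through $M$, the subgroup $\gamma^{H}_{k-1}N$ is generated by outer brackets taken from $H$, and these are reproduced by brackets from $N$ only after identifying the outer factors correctly. This identification is transparent when $H=N$ (then $M=N^{ab}=H^{ab}$, the source genuinely is $M^{\otimes k}$, and the map is the classical free Lie algebra surjection); in general the clean surjection has source $(H/[H,H])^{\otimes(k-1)}\otimes M$, and one must compare it with $M^{\otimes k}$ via the natural map $M\to H/[H,H]$, together with the three-subgroups bookkeeping. A secondary technical point is the torsion propagation $M\ \text{torsion}\Rightarrow M^{\otimes k}\ \text{torsion}$: this is immediate when $s=1$ (a torsion module over $\Z[t^{\pm1}]$ is supported on finitely many points of $\C^{\ast}$, and the diagonal tensor power is supported on the finite set of coordinatewise products), which already covers all the knot-theoretic applications, but requires extra hypotheses for $s\geq 2$.
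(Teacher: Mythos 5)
Your reconstruction is, up to an index shift, precisely the paper's own proof: the paper takes the iterated commutator map $f\colon M^{\otimes(k+1)}\to M_k=\gamma^{H}_{k}N/\gamma^{H}_{k+1}N$, asserts that it is a surjective $\Lambda$-module homomorphism by citing Robinson 5.2.5, asserts that $\Lambda$-torsion of $M$ passes to its tensor powers, and then concludes via Lemma \ref{lemma:div} and Corollary \ref{cor:Alex-H-series} --- exactly your plan.

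The substantive point is that the two obstacles you flag in your closing paragraph are not hypothetical: both are genuine gaps in this argument, equally present in the paper's proof, and your diagnosis of each is correct. Robinson 5.2.5 is the classical epimorphism onto the graded quotients of the \emph{ordinary} lower central series, i.e.\ the case $N=H$; for $N\subsetneq H$ the generating brackets of $\gamma^{H}_{k}N$ have outer entries ranging over $H$, not $N$, and surjectivity of $f$ can fail outright. Concretely, let $A$ be the integral Heisenberg group with $z=[x,y]$ central, let $\phi\in\mathrm{Aut}(A)$ be determined by $\phi(x)=x^{-1}$, $\phi(y)=y$ (so $\phi(z)=z^{-1}$), and put $G=\Z\ltimes_{\phi}A$, $H=A$, $N=\langle x,z\rangle$. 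This is an Alexander tuple with $s=1$; the module $M=N/[N,H]\cong\Lambda/(t+1)$ is $\Lambda$-torsion, and $M_1=[H,N]/[H,[H,N]]=\langle z\rangle\cong\Lambda/(t+1)$. But $N$ is abelian, so $f\colon M^{\otimes 2}\to M_1$ is the zero map, and correspondingly $\Delta_0(M_1)=t+1$ does not divide $\Delta_0(M^{\otimes 2})=t-1$. The conclusion itself breaks under the proof's indexing: $2\in t_G(z)$ since $z\cdot szs^{-1}=1$, yet $t(t-1)=\emptyset$; the statement survives only accidentally through the factor $t+1$ of $\Delta_0(M^{\otimes 1})$. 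So the corollary as proved really needs $N=H$ (which holds in all of the paper's applications, where $N=H=[G,G]$), or else the repaired surjection with source $(H/[H,H])^{\otimes k}\otimes M$ that you describe --- and with that repair no divisibility by $\Delta_0(M^{\otimes (k+1)})$ follows in general, because the natural map $M\to H/[H,H]$ need not be onto.

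Your second caveat is equally sharp: for $s\geq 2$ torsion does not propagate to diagonal tensor powers --- $\Lambda/(t_1-1)$ and $\Lambda/(t_2-1)$ are torsion, but their tensor product over $\Z$ with the diagonal action is a free $\Lambda$-module of rank one --- so the paper's sentence ``since $M$ is a $\Lambda$-torsion module, so is $M^{\otimes k}$'' also needs $s=1$. For $s=1$ your argument goes through even without finite generation: any element lies in a finitely generated (in fact, a quotient of a cyclic-tensor-cyclic) submodule, and $\Lambda/(f)\otimes_{\Z}\Lambda/(g)$ becomes module-finite over $\Z$ localized at the leading and constant coefficients of $f$ and $g$, so Cayley--Hamilton plus clearing denominators produces a nonzero annihilator. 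In short: your proof is the paper's proof, and your final paragraph correctly isolates the two places where both arguments silently require $N=H$ and $s=1$ (or a correspondingly weakened statement).
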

\begin{proof}
Let $M_k$ be the Alexander module of the Alexander tuple $\mathcal{A}_k= (G;(\gamma^{H}_{k+1}N,\gamma^{H}_kN, H))$ and $f:M^{\otimes (k+1)} \rightarrow M_k$ be the map defined by 
\[ f(a_1 \otimes a_2 \otimes \cdots \otimes a_{k+1}) = [a_1,[a_2,[\cdots,[a_{k},a_{k+1}] \cdots]]] \qquad (a_i \in N)\]
The map $f$ is a surjective $\Lambda$-module homomorphism (see \cite[5.2.5]{Ro}).
Since we are assuming $M$ is a $\Lambda$-torsion module, so is $M^{\otimes k}$. Therefore by Lemma \ref{lemma:div}, $\Delta_0(TM_k)$ divides $\Delta_0(TM^{\otimes k}) = \Delta_{\mathcal{A}_k}(\bt)$ so the assertion follows from Corollary \ref{cor:Alex-H-series}.
\end{proof}

\begin{remark}[Non-commutative settings]
Throughout this section we assume that $G\slash H$ is a finitely generated free abelian group (assumption (b) of the Alexander tuple). However the arguments presented in this section works without this assumption. 

Let $X,N,H$ be normal subgroups of $G$ such that $[N,H] \subset X \subset N \subset H$. Let $\Lambda:=\Z[G\slash H]$ be the group ring of the (possibly non-commutative) quotient group $G\slash H$. Then the conjugation of $G$ induces a structure of right $\Lambda$-module for the quotient group $M=N \slash X$. 

By the same argument, we see that if $g \in N$, then 
\[ t(g) \subset  t_{G\slash X}(q(g)) = \{\varepsilon(f) \: | \:  f \in \Ann(p(g)) \mbox{ and } f \mbox{ is a positive element} \} \]
where $p:N \rightarrow N \slash X$ and $q: G \rightarrow G \slash X$ are the projection maps.

Unfortunately, it is not easy to use this non-commutative version.
\end{remark}

\section{The set \texorpdfstring{$t(h)$}{t(h)}}

To utilize the results in the previous section, we need to know set $t(h(\bt))$ for irreducible $h(\bt)$. In this section we discuss the structure of the set $t(h(t))$ for irreducible one-variable Laurent polynomial $h \in \Lambda=\Z[t^{\pm 1}]$.

To begin with, we observe the following simple properties.
For a positive integer $k>0$, let $\Phi_k(t)$ be the $k$-th cyclotomic polynomial and $P_k$ be the set of roots of $\Phi_k$, the set of primitive $k$-th root of unities.

\begin{lemma}
\label{lemma:elementary-estimate}
Assume that $h(t)=a_mt^{m}+ \cdots +a_1t+a_0$ ($a_0,a_m \neq 0$, $m\geq 1$) is irreducible.
\begin{itemize}
\item[(i)] $t(h)\neq \emptyset$ if and only if $h$ has no positive real root.
\item[(ii)] $t(h) \subset |h(1)|\Z$.
\item[(iii)] If $n \in t(h(t))$ then $n \geq |a_m|+|a_0|$.
\item[(iv)] $2 \in t(h(t))$ if and only if $h(t)=\Phi_{2s}$ for some $s>0$. 
\end{itemize}
\end{lemma}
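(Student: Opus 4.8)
The plan is to prove the four parts of Lemma~\ref{lemma:elementary-estimate} by analyzing when an irreducible $h(t) \in \Lambda = \Z[t^{\pm 1}]$ admits a positive multiple, i.e.\ a polynomial $f \in (h)$ all of whose coefficients are non-negative. The unifying idea is that membership of $n$ in $t(h)$ means there is a positive $f$ with $f = h \cdot r$ for some $r \in \Lambda$ and $\varepsilon(f) = f(1,\ldots,1) = n$. I would treat the four statements in roughly this order, since (i) supplies the feasibility criterion underlying the rest.

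For part (i), the key observation is that a positive polynomial $f$ never vanishes at a positive real, so if $h$ had a positive real root $\rho > 0$, then $h(\rho) = 0$ would force $f(\rho) = 0$ for every multiple, contradicting $f(\rho) > 0$; hence $t(h) = \emptyset$. For the converse I would argue that if $h$ has no positive real root, one can explicitly produce a positive multiple. The cleanest route is to multiply $h$ by a suitable product of factors of the form $(t^{d}-1)/(t-1) = 1 + t + \cdots + t^{d-1}$ or, more robustly, to use that a real polynomial with no positive real root has a multiple with non-negative coefficients --- this is a classical Poincar\'e--type result (every real polynomial positive on $(0,\infty)$ has, after multiplying by a power of $t$ to clear denominators, a non-negative multiple $(1+t)^{M}h(t)$ for large $M$). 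I expect this direction to be the main obstacle, since it requires invoking or reproving that positivity-producing multiplication lemma. For part (ii), I would use the augmentation homomorphism $\varepsilon:\Lambda \to \Z$: since $f = h r$ we get $\varepsilon(f) = \varepsilon(h)\varepsilon(r) = h(1)\cdot r(1)$, so every element of $t(h)$ is a multiple of $h(1)$, giving $t(h) \subset |h(1)|\Z$ (taking absolute values since $\varepsilon(f) = n \geq 0$).

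For part (iii), I would extract the lower bound $n \geq |a_m| + |a_0|$ by looking at the extreme coefficients of a positive multiple. Writing $f = h\cdot r$ and collecting the highest- and lowest-degree terms of $f$, the top coefficient of $f$ is $a_m$ times the top coefficient of $r$ and the bottom coefficient of $f$ is $a_0$ times the bottom coefficient of $r$; since $f$ is positive these contribute at least $|a_m|$ and $|a_0|$ to $\varepsilon(f) = n$, and (after checking the two extreme monomials are distinct, which holds because $f$ has positive degree spread) they are distinct summands, so $n \geq |a_m| + |a_0|$. Care is needed when $r$ itself could have cancellation, so I would phrase the argument in terms of the non-negativity of all coefficients of $f$ directly.

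For part (iv), combining (iii) with the constraint $n = 2$ forces $|a_m| + |a_0| \leq 2$, so $|a_m| = |a_0| = 1$, meaning $h$ is monic (up to sign) with unit constant term. Irreducibility over $\Z$ together with the existence of a positive multiple $f$ with $\varepsilon(f) = 2$ strongly restricts $h$: I would show $h$ must be a factor of some $t^{d}-1$, hence a cyclotomic polynomial $\Phi_k$, and then identify exactly which cyclotomic polynomials admit a positive multiple summing to $2$. A positive $f$ with $\varepsilon(f)=2$ is, up to a power of $t$, of the form $t^{a}+t^{b}$, and $(h) \ni t^a + t^b = t^b(t^{a-b}+1)$; since $h$ is irreducible it must divide $t^{a-b}+1 = (t^{2(a-b)}-1)/(t^{a-b}-1)$, whose irreducible factors are exactly the $\Phi_{2s}$ with $s \mid (a-b)$, $s \nmid \tfrac{a-b}{\ }$ odd-part condition. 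I would conclude $h = \Phi_{2s}$, and conversely check each $\Phi_{2s}$ divides $t^{s}+1$ so $2 \in t(\Phi_{2s})$. The delicate point here is the precise factorization of $t^{m}+1$ into cyclotomics, namely that $t^{m}+1 = \prod_{s} \Phi_{2s}$ over those $s$ for which $2s \nmid m$ but $2s \mid 2m$; I would state this identity and read off that its irreducible factors are precisely the $\Phi_{2s}$, completing both directions.
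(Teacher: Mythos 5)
Your proposal is correct and follows essentially the same route as the paper: part (ii) via the augmentation homomorphism, part (iii) via the extreme coefficients of a positive multiple, and part (iv) via the factorization of $t^{d}+1$ into the cyclotomic polynomials $\Phi_{2s}$; for part (i) the paper simply cites Dubickas and Brunotte for the nontrivial direction, which is exactly the classical Poincar\'e--P\'olya fact (multiplying by $(1+t)^{M}$ for large $M$) that you invoke. The only point to tighten is in (iv): you should explicitly rule out the degenerate positive multiple $2t^{a}$ (your case $a=b$), which the paper dismisses in one line because the non-constant irreducible $h$ cannot divide the constant $2$.
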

\begin{proof}
(i) This is proven in \cite{Du} (see also \cite{Br}).

(ii) If $n \in t(h(t))$ then there exists $g(t)=b_kt^{k} + \cdots + b_0$ ($b_k,b_0 \neq 0$) such that $g(t)h(t)$ is positive and that $n=f(1)$. Then $n=g(1)h(1)=|g(1)||h(1)|$ so $|h(1)|$ always divides $n$.

(iii) Since $f(t)=g(t)h(t)$ is positive,
\[ n=a_mb_k + \cdots +a_0b_0 \geq |a_mb_k| + |a_0b_0| \geq |a_m|+|a_0| \]
(iv) If $2 \in O(h(t))$ there exists $g(t) \in \Lambda$ such that $g(t)h(t) = 1 + t^{d}$ or $g(t)h(t)=2$. Teh latter case does not happen since we are assuming that $h$ is not a constant. Thus $h(t)$ divides $1+t^{d}$ which implies that $h(t)=\Phi_{2s}$ for some divisor $s$ of $d$.
\end{proof}

To get more constraints, we use the following quantity.

\begin{definition}
For $h(t) \in \Lambda$, we define
\[ R_k(h) =  \prod_{\zeta \in P_k}|h(\zeta)| \in \Z_{\geq 0} \]
where $P_k$ is the set of primitive $k$-th root of unities.
\end{definition}
This is the absolute value of the resultant of $h(t)$ and the $k$-th cyclotomic polynomial $\Phi_k(t)$. 

\begin{proposition}
\label{prop:AT-root-unity}
Let $h \in \Lambda$ be an irreducible polynomial and $k=p^e$ be a power of a prime $p$.
Then if $n \in t(h)$, then either
\begin{itemize}
\item[(a)] $p$ divides $n$. Furthermore, if $h \neq \Phi_k(t)$ then $p|h(1)|$ divides $n$, or, 
\item[(b)] $n^{\phi(k)} \geq R_k(h)$. 
\end{itemize}
holds. Here $\phi(k):=\# P_k$ is the euler totient function.
\end{proposition}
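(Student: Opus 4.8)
The statement says that if $n \in t(h)$ for $h$ irreducible and $k = p^e$, then either $p \mid n$ (with the stronger conclusion $p|h(1)| \mid n$ when $h \neq \Phi_k$), or $n^{\phi(k)} \geq R_k(h)$. By definition of $t(h)$, the hypothesis $n \in t(h)$ means there is a positive polynomial $f(t) \in (h(t))$ with $\varepsilon(f) = f(1) = n$. Write $f(t) = g(t)h(t)$. The plan is to evaluate $f$ at the primitive $k$-th roots of unity $\zeta \in P_k$ and exploit that $f$ is positive, producing an upper bound on each $|f(\zeta)|$, then combine these over all $\zeta \in P_k$ to relate $n$ to $R_k(h) = \prod_{\zeta \in P_k}|h(\zeta)|$.

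\textbf{Key steps.} First I would handle the positivity estimate. Since $f(t) = \sum_I a_I t^I$ has all $a_I \geq 0$ and $\sum_I a_I = f(1) = n$, for any root of unity $\zeta$ the triangle inequality gives $|f(\zeta)| = |\sum_I a_I \zeta^I| \leq \sum_I a_I |\zeta^I| = \sum_I a_I = n$. Thus $|f(\zeta)| \leq n$ for every $\zeta \in P_k$. Taking the product over the $\phi(k)$ primitive $k$-th roots of unity yields $\prod_{\zeta \in P_k}|f(\zeta)| \leq n^{\phi(k)}$. Next, since $f = gh$, we have $\prod_{\zeta \in P_k}|f(\zeta)| = \left(\prod_{\zeta \in P_k}|g(\zeta)|\right)\left(\prod_{\zeta \in P_k}|h(\zeta)|\right) = R_k(g)\, R_k(h)$. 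So $R_k(g)\,R_k(h) \leq n^{\phi(k)}$. The quantity $R_k(g) = \prod_{\zeta \in P_k}|g(\zeta)|$ is, up to sign, the resultant of $g$ and $\Phi_k$, hence an integer. The dichotomy then comes from asking whether $\Phi_k \mid g$, equivalently whether some $\zeta \in P_k$ is a root of $g$: if no primitive $k$-th root of unity is a root of $g$, then $R_k(g)$ is a nonzero integer, so $R_k(g) \geq 1$, giving $R_k(h) \leq n^{\phi(k)}$, which is alternative (b). If some $\zeta \in P_k$ is a root of $g$, then since $\Phi_k$ is irreducible over $\Q$ (as $k$ is a prime power, though $\Phi_k$ is irreducible for all $k$), $\Phi_k \mid g$ in $\Q[t]$ and hence in $\Z[t]$, so $\Phi_k \mid f = gh$.

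\textbf{Deriving alternative (a).} In the second case I would extract divisibility of $n$ by $p$. Writing $f = \Phi_k \cdot g_1 \cdot h$ and using $\Phi_{p^e}(1) = p$ (the standard evaluation, since $\Phi_{p^e}(t) = 1 + t^{p^{e-1}} + \cdots + t^{(p-1)p^{e-1}}$ has $p$ terms), we get $n = f(1) = \Phi_k(1)\,g_1(1)\,h(1) = p\, g_1(1)\, h(1)$, so $p \mid n$. For the sharper claim when $h \neq \Phi_k$: since $h$ is irreducible and $\Phi_k$ is irreducible with $\Phi_k \mid f$ but $h \neq \Phi_k$ (up to units), the factors $\Phi_k$ and $h$ are non-associate irreducibles both dividing $f$, so $\Phi_k h \mid f$, giving $n = f(1) = p\,|h(1)|\,(\text{nonneg integer})$, whence $p|h(1)| \mid n$. (A small care point: $h(1)$ could vanish, in which case $n = f(1)$ need not force the full divisor, but then $|h(1)| = 0$ makes the claim vacuous or else $t(h) = \emptyset$ by Lemma \ref{lemma:elementary-estimate}(i) with $t=1$ a root; I would note $h(1) \neq 0$ is the relevant regime and dispose of the degenerate case.)

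\textbf{Main obstacle.} The conceptually delicate point is the clean case split via the resultant $R_k(g)$ and ensuring it behaves as an integer that is $\geq 1$ exactly when $\Phi_k \nmid g$; this relies on $\Phi_k$ being irreducible and on $R_k(g)$ being (up to sign) $\mathrm{Res}(\Phi_k, g) \in \Z$. The other point needing care is the passage from ``$\zeta$ is a root of $f$'' to the divisibility $\Phi_k \mid f$ in $\Z[t]$, which uses that $\Phi_k$ is monic with integer coefficients so that division stays in $\Z[t]$, together with the irreducibility of $\Phi_k$ over $\Q$. Once these are in place, the positivity bound $|f(\zeta)| \leq n$ and the multiplicativity of $R_k$ over the factorization $f = gh$ make the rest routine.
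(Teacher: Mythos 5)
Your proof is correct and takes essentially the same route as the paper: the positivity bound $|f(\zeta)| \le f(1) = n$ on the unit circle, the product over $P_k$ giving $n^{\phi(k)} \ge R_k(f) = R_k(g)R_k(h)$, and the dichotomy on whether $\Phi_k \mid g$ (yielding (a) via $\Phi_k(1)=p$ and, when $h \neq \Phi_k$, the extra factor $|h(1)|$) or $R_k(g) \neq 0$ (yielding (b)). You in fact make explicit two points the paper glosses over, namely that $R_k(g)$ is an integer so that $R_k(g)\neq 0$ forces $R_k(g)\ge 1$, and the degenerate case $h(1)=0$.
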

\begin{proof}
Assume that $f(t)=g(t)h(t)$ is positive and that $n= f(1)=g(1)h(1)$.
Since $f(t)$ is positive, $n=|f(1)| \geq |f(\omega)|$ for all $\omega \in \{z \in \mathbb{C} \: | \: |z| = 1\}$.
In particular, $n=f(1) \geq |f(\zeta)|$ for every root of unities $\zeta$. Therefore
\[ n^{\phi(k)}=|f(1)|^{\phi(k)} \geq R_k(f) = R_k(g)R_k(h) \]
holds for all $k>0$. 

If $R_k(g)=0$, then we may write $f(t)=\Phi_k(t)f^{*}(t)$ for some $f^{*}(t)$.
Since $\Phi_{k}(1)=p$ if $k=p^{e}$, 
\[ n= f(1) = |\Phi_k(1)||f^{*}(1)| = p |f^{*}(1)| \]
Furthermore, if $h \neq \Phi_k(t)$ then $f(t)=\Phi_k(t)g^{*}(t)h(t)$ for some $g^{*}(t)$ hence
\[ n = f(1) = |\Phi_k(1)||g^{*}(1)||h(1)| = p |h(1)||g^{*}(1)|. \]
Thus, in this case (a) holds. 
If $R_k(g) \neq 0$ then $R_k(f)=R_k(g)R_k(h) \geq R_k(h)$ so (b) holds.  
\end{proof}

The \emph{Mahler measure} $M(f)$ of a polynomial $f(t)=a_dt^{d}+a_{d-1}t^{d-1}+\cdots+a_0 \in \Z[t^{\pm 1}]$ is defined by
\[ M(f) = |a_d|\prod_{i=1}^{d}\max\{1,|\alpha_i|\} \]
where $\alpha_1,\ldots,\alpha_d$ are zeros of $f(t)$.
It is known that 
\cite{GS,Ri,SW}
\[ \lim_{k \to \infty} \left(\prod_{\zeta^{k}=1} h(\zeta)\right)^{\frac{1}{k}} = \left( \prod_{d | k} R_d(h) \right)^{\frac{1}{k}} = M(h) \]
Thus by Proposition \ref{prop:AT-root-unity} we get the following.

\begin{corollary}[Mahler measure bound]
If $h(t)$ is irreducible, then $n \geq M(h)$ for all $n \in t(h)$.
\end{corollary}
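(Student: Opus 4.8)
The plan is to extract from positivity a uniform bound on $|f|$ over the unit circle and then feed it into the limit formula for the Mahler measure quoted just above. Fix $n \in t(h)$, so that there is a positive $f \in (h)$ with $n = \varepsilon(f) = f(1)$; write $f = gh$ with $g \in \Lambda$. The starting observation, already used in the proof of Proposition \ref{prop:AT-root-unity}, is that positivity of $f$ forces $|f(\omega)| \le f(1) = n$ for every $\omega$ on the unit circle: writing $f(\bt)=\sum_I a_I \bt^I$ we have $|f(\omega)| = |\sum_I a_I \omega^I| \le \sum_I a_I |\omega|^I = \sum_I a_I = f(1)$, since all $a_I \ge 0$ and $|\omega| = 1$. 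In particular this holds at each of the $k$ distinct $k$-th roots of unity.

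Next I would compare $n^k$ with the product of $|f|$ over all $k$-th roots of unity. Since each of the $k$ factors is at most $n$,
\[ n^k \ge \prod_{\zeta^k = 1} |f(\zeta)|, \]
and grouping these roots by their exact order rewrites the right-hand side as $\prod_{d \mid k} R_d(f)$. Taking $k$-th roots and letting $k \to \infty$, the cited identity
\[ \lim_{k \to \infty}\Bigl(\prod_{\zeta^k = 1} |f(\zeta)|\Bigr)^{1/k} = M(f) \]
yields $n \ge M(f)$.

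Finally I would pass from $f$ to $h$ using multiplicativity of the Mahler measure. Since $f = gh$ we have $M(f) = M(g)M(h)$, and $M(g) \ge 1$ for any nonzero $g \in \Lambda$ (its leading coefficient has absolute value at least $1$, and the factor $\prod \max\{1,|\alpha_i|\}$ is at least $1$). Hence $n \ge M(f) = M(g)M(h) \ge M(h)$, as claimed.

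The only delicate point is the convergence of the geometric means $\bigl(\prod_{\zeta^k=1}|f(\zeta)|\bigr)^{1/k}$ to $M(f)$: because $f$ (or its factor $g$) may vanish at some roots of unity, individual products can be $0$ and the finite-$k$ inequalities then carry no information, so one genuinely needs the limiting statement rather than any single $k$. This is exactly the content of the results \cite{GS,Ri,SW} quoted above, so it poses no real obstacle; the remaining care is only in treating $M$ for Laurent polynomials (normalizing away the unit factors $t^{\pm 1}$, under which $M$ is unchanged) so that the multiplicativity $M(gh)=M(g)M(h)$ and the bound $M(g)\ge 1$ apply verbatim.
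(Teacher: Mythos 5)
Your proof is correct and takes essentially the same route as the paper: the paper's one-line derivation likewise combines the positivity bound $|f(\zeta)|\le f(1)=n$ at roots of unity (from the proof of Proposition \ref{prop:AT-root-unity}) with the quoted limit formula for the Mahler measure. Your additional step $M(f)=M(g)M(h)\ge M(h)$, together with the caveat you flag about zero factors at roots of unity --- which indeed requires the excluded-zeros form of the limit theorem in \cite{GS,Ri,SW} (or, alternatively, Jensen's formula $M(f)=\exp\int_0^1\log|f(e^{2\pi i\theta})|\,d\theta$, which gives $M(f)\le n$ directly from $|f|\le n$ on the circle) --- is exactly the right way to make that one-liner precise.
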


We give some simple calculations which will be used later.

\begin{example}
\label{example:cyclotomic}
Let $k=p^{a}$ be a power of a prime $p$. Since $\Phi_{k}(1)=p$, by Lemma \ref{lemma:elementary-estimate} $t(\Phi_{k}) \subset p\N$. Indeed, $\Phi_k = t^{(p-1)p^{a-1}}+ t^{(p-2)p^{(a-1)}} + \cdots +1$ is positive so $p \in t(\Phi_k)$. Thus 
\[ t(\Phi_{p^{a}})=p\N \]

Let $k=p^{a}q^{b}$ where $p<q$ be primes and $a,b>0$.
Assume that $n \in t(\Phi_k)$.
Since $R_{p}(\Phi_{k})=q$ (see \cite{Ap}), by Proposition \ref{prop:AT-root-unity} $p$ divides $n$, or, $n \geq q$. 
Similarly, since $R_{q}(\Phi_{k})=p$, either $q$ divides $n$, or, $n \geq p$.
Since $p<q$, we conclude that 
\[ t(\Phi_{p^aq^b}) \subset p\N \cup \N_{\geq q} \]
\end{example}

\section{Application: generalized torsion elements of knot groups}

In this section we apply our arguments for the case of knots groups. We refer to \cite{CR} as a reference for the knot theory and its relation to orderable group theory.

Let $K$ be a knot in $S^{3}$, and $G=G(K):=\pi_1(S^{3} \setminus K)$ be the knot group, the fundamental group of the knot complement. As we have mentioned in Example \ref{example:A-knot}, the Alexander polynomial $\Delta_K(t)$ of the knot $K$ in knot theory is the Alexander polynomial of Alexander tuple $(G;([[G,G],[G,G]], [G,G],[G,G]))$.

Let $\Sigma_{k}(K)$ be the $k$-fold cyclic branched covering of $K$.
If $\Sigma_k(K)$ is a rational homology sphere (that is equivalent to saying that $\Delta(\zeta) \neq 0$ for every (not necessarily primitive) $k$-th root of unities), then the order of homology is given by
\[|H_1(\Sigma_k(K);\Z)| = \prod_{i=1}^{k}|\Delta_K(\zeta^{i})| = \prod_{d | k} R_d(h)  \]
\cite{We}. Thus Proposition \ref{prop:AT-root-unity} leads to the following interesting relation between $t(g)$ of a knot group $g \in G(K)$ and the (growth of) homology of cyclic branched covers. Here we state the case $\Delta_K(t)$ is irreducible, to make the statement simpler.

\begin{corollary}
\label{cor:tg-knot}
Let $K$ be a knot in $S^{3}$. Assume that $\Delta_K(t)$ is irreducible.
Let $G=G(K)=\pi_1(S^{3}\setminus K)$ be the knot group and $\Sigma_{k}(K)$ be the $k$-fold cyclic branched covering of $K$.
Assume that $\Sigma_k(K)$ is a rational homology sphere and $k=p^{e}$ is a power of a prime $p$. For $g \not \in [[G,G],[G,G]]$ and $n \in t(g)$ either
\begin{itemize}
\item[(a)] $n \geq |H_1(\Sigma_k(K);\Z)|^{\frac{1}{k-1}}$, or,
\item[(b)] $p$ divides $n$.
\end{itemize}
holds.
\end{corollary}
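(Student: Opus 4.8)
\textbf{Proof proposal for Corollary \ref{cor:tg-knot}.}

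The plan is to specialize Proposition \ref{prop:AT-root-unity} to the knot-group Alexander tuple and then translate the resultant estimate into the homological quantity $|H_1(\Sigma_k(K);\Z)|$. First I would set up the Alexander tuple. Since $\Delta_K(t)$ is the Alexander polynomial of $\mathcal{A}=(G;([[G,G],[G,G]],[G,G],[G,G]))$ and we are given that it is irreducible, any $g \notin [[G,G],[G,G]]$ that lies in $[G,G]$ will have $t(g) \subset t(\Delta_K(t))$ by Theorem \ref{theorem:main} (the single irreducible factor being $\Delta_K$ itself). I should be careful about the hypothesis: the corollary says $g \notin [[G,G],[G,G]]$ but does not explicitly say $g \in [G,G]$. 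However, $H_1(S^3 \setminus K;\Z)=\Z$ is torsion-free, so by Corollary \ref{cor:monotonicity}(i) applied to the abelianization (or directly, since $t(g)\subset |H_1(\Sigma_k)|^{\cdots}$ is vacuous otherwise), the interesting case is $g \in [G,G]$; I would note that if $g \notin [G,G]$ then $t(g)=\emptyset$ and the statement holds vacuously, reducing to the case $g \in [G,G]\setminus[[G,G],[G,G]]$ where Theorem \ref{theorem:main} applies.

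Next I would invoke Proposition \ref{prop:AT-root-unity} with $h=\Delta_K$ and $k=p^e$. For $n \in t(g) \subset t(\Delta_K)$, that proposition gives the dichotomy: either $p \mid n$, which is exactly alternative (b), or $n^{\phi(k)} \geq R_k(\Delta_K)$. The remaining work is to show that this second inequality implies alternative (a), namely $n \geq |H_1(\Sigma_k(K);\Z)|^{1/(k-1)}$. The key algebraic input is the stated formula
\[ |H_1(\Sigma_k(K);\Z)| = \prod_{d \mid k} R_d(\Delta_K), \]
valid because $\Sigma_k(K)$ is a rational homology sphere. Since $k=p^e$, the divisors of $k$ are $1,p,p^2,\ldots,p^e$, and $R_1(\Delta_K)=|\Delta_K(1)|=1$ because the Alexander polynomial of a knot satisfies $\Delta_K(1)=\pm 1$. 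Thus
\[ |H_1(\Sigma_k(K);\Z)| = \prod_{j=1}^{e} R_{p^j}(\Delta_K). \]

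To finish I would bound $n$ below using all the divisors at once rather than just $R_k$. The cleaner route: for each $j=1,\ldots,e$ the proposition (applied with the prime power $p^j$) tells us that \emph{if} $p \nmid n$ then $n^{\phi(p^j)} \geq R_{p^j}(\Delta_K)$. Assuming we are in the case where (b) fails, i.e. $p \nmid n$, I would multiply these inequalities over all $j$ from $1$ to $e$. The exponents sum to $\sum_{j=1}^{e}\phi(p^j) = p^e - 1 = k-1$ (a telescoping of the standard identity $\sum_{d\mid k}\phi(d)=k$ with the $d=1$ term removed), giving
\[ n^{\,k-1} \;\geq\; \prod_{j=1}^{e} R_{p^j}(\Delta_K) \;=\; |H_1(\Sigma_k(K);\Z)|, \]
and taking $(k-1)$-th roots yields alternative (a). The main obstacle I anticipate is precisely this bookkeeping step: Proposition \ref{prop:AT-root-unity} as stated produces only the single estimate $n^{\phi(k)} \geq R_k(h)$ for one chosen $k$, so I must confirm that its proof (which rests on $n=f(1)\geq |f(\zeta)|$ for \emph{every} root of unity $\zeta$) indeed furnishes the stronger simultaneous bound $n^{k-1}\geq \prod_{d\mid k} R_d$ once $R_{p^j}(g)\neq 0$ for all $j$ — equivalently that when (b) fails, $f$ is divisible by no $\Phi_{p^j}$, so that $R_{p^j}(f)\geq R_{p^j}(h)$ simultaneously for every $j$. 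Verifying that the non-divisibility needed to run alternative (b)'s failure is uniform across all $j\leq e$ is the one point requiring care; once that is in hand, the exponent identity $\sum_{j=1}^e \phi(p^j)=k-1$ and the homology formula deliver the result.
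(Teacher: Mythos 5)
Your proof is correct and is essentially the argument the paper intends: the corollary is presented as a direct consequence of Proposition \ref{prop:AT-root-unity} together with the formula $|H_1(\Sigma_k(K);\Z)|=\prod_{d\mid k}R_d(\Delta_K)$, and your bookkeeping---applying the proposition once for each prime power $p^j$, $j=1,\ldots,e$ (noting the ``bad'' alternative is always the same condition $p\mid n$), then multiplying the inequalities using $\sum_{j=1}^{e}\phi(p^j)=k-1$ and $R_1(\Delta_K)=|\Delta_K(1)|=1$---is exactly the detail the paper leaves implicit. Your preliminary reduction to $g\in[G,G]$ (the statement being vacuous otherwise since $t(g)=\emptyset$ by monotonicity applied to the abelianization $G\to\Z$) is also correct, so there is no gap.
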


The following special case ($k=2$) of Corollary \ref{cor:tg-knot} deserves to mention.

\begin{corollary}[Determinant bound]
\label{cor:determinant}
Let $G=G(K)$ be the knot group of a knot $K$. Assume that $\Delta_K(t)$ is irreducible.
If $g \not \in [[G,G],[G,G]]$ and $n \in t(G(K))$ is odd, then
\[ n \geq \det(K)=|\Delta_K(-1)| \] 
\end{corollary}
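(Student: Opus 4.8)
The plan is to recognize Corollary~\ref{cor:determinant} as the special case $k=2$ of Corollary~\ref{cor:tg-knot}, and to run the same reduction that produces that corollary from Proposition~\ref{prop:AT-root-unity}. First I would apply Theorem~\ref{theorem:main} (or directly Corollary~\ref{cor:Alex-H-series}) to the Alexander tuple $(G;([[G,G],[G,G]],[G,G],[G,G]))$ whose Alexander polynomial is $\Delta_K(t)$. Since $g \notin [[G,G],[G,G]]$ and $\Delta_K(t)$ is assumed irreducible, the only irreducible factor available is $h(t) \doteq \Delta_K(t)$ itself, so the theorem gives $t(g) \subset t(\Delta_K(t))$. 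Thus it suffices to constrain $t(h)$ for $h=\Delta_K$.

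Next I would invoke Proposition~\ref{prop:AT-root-unity} with the prime power $k=2=p^{e}$, i.e. $p=2$, $e=1$. For $n \in t(h)$ the proposition yields the dichotomy: either $2 \mid n$, or $n^{\phi(2)} \geq R_2(h)$. Here the point is that $\phi(2)=1$, so the second alternative reads simply $n \geq R_2(h) = |h(-1)|$, the single factor $|h(-1)|$ coming from the unique primitive second root of unity $\zeta=-1$. Since $h \doteq \Delta_K$, up to the unit ambiguity we have $|h(-1)| = |\Delta_K(-1)| = \det(K)$. Because the hypothesis assumes $n$ is odd, the first alternative $2 \mid n$ is excluded, forcing the second, namely $n \geq \det(K) = |\Delta_K(-1)|$.

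The only point requiring minor care is the identification $R_2(h) = |h(-1)| = \det(K)$ and making sure the unit ambiguity $h \doteq \Delta_K$ does not affect the value $|h(-1)|$. This is routine: a unit of $\Lambda=\Z[t^{\pm 1}]$ has the form $\pm t^{m}$, and evaluating $|\pm(-1)^{m}\Delta_K(-1)| = |\Delta_K(-1)|$ shows the absolute value is unchanged, so $R_2(\Delta_K)$ is well defined and equals $|\Delta_K(-1)|=\det(K)$ by the classical formula for the knot determinant. I do not expect any serious obstacle here, as everything reduces to the already-established machinery; the statement is essentially a direct specialization of Corollary~\ref{cor:tg-knot} at $k=2$, where $|H_1(\Sigma_2(K);\Z)|^{1/(k-1)} = |H_1(\Sigma_2(K);\Z)| = \det(K)$ recovers the determinant bound.
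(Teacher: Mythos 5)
Your proposal is correct and matches the paper's own route: the paper offers no separate argument for Corollary \ref{cor:determinant}, presenting it exactly as the $k=2$ specialization of Corollary \ref{cor:tg-knot}, which itself comes from Theorem \ref{theorem:main} plus Proposition \ref{prop:AT-root-unity} with $\phi(2)=1$, $R_2(\Delta_K)=|\Delta_K(-1)|=\det(K)$, and oddness of $n$ ruling out the divisibility alternative. The only cosmetic point is that Theorem \ref{theorem:main} formally requires $g\in[G,G]$, but when $g\notin[G,G]$ its image in $G/[G,G]\cong\Z$ is nontrivial, so $t(g)=\emptyset$ and the claim is vacuous — the same implicit reduction the paper makes.
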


It is known that for a given $k$, the number of alternating knot $K$ satisfying $\det(K)\leq k$ is finite \cite{Ban}. Thus Corollary \ref{cor:determinant} says that for each odd $k$, there are only finitely many alternating knots $K$ having a generalized torsion element $g \not \in [[G,G],[G,G]]$ with $gord(g)=k$ and $\Delta_K(t)$ is irreducible. This partially explains why finding a generalized torsion element of a knot group is difficult, even though it is expected that many knots admit a generalized torsion element.

This observation poses the following finiteness question.
\begin{question}
\label{ques:finite}
For a given integer $m$, let $M_{\sf alt}(m)$ be the number of alternating knot $K$ other than $(2,k)$-torus knot\footnote{This is equivalent to saying that $K$ is a hyperbolic alternating knot \cite{Me}} whose knot group $G(K)$ has a generalized torsion element $g$ with $gord(g)=m$. Similarly, let $M_{\sf hyp}(m)$ be the number of hyperbolic knot $K$ whose knot group $G(K)$ has a generalized torsion element $g$ with $gord(g)=m$.
 Is $M_{\sf alt}(m)$ (resp. $M_{\sf hyp}(m)$) finite ?
\end{question}

A similar question makes sense for other appropriate classes of knots, although it is necessary to exclude torus knots and cable knots as the next proposition shows.

\begin{proposition}
\label{prop:torus-cable}
let $K$ be a $(p^{a},q^{b})$-torus knot $T_{p^{a},q^{b}}$ or a $(p^{a},q^{b})$-cable knot where $p<q$ are primes. Then its knot group $G(K)$ admits a generalized torsion elements with $gord(g)=p$ such that $g \not \in [[G(K),G(K)],[G(K),G(K)]]$.
\end{proposition}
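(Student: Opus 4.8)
The plan is to build $g$ from the Seifert structure, obtain the upper bound $p\in t(g)$ from Lemma~\ref{lemma:generalized-torsion-simple}, verify that $g$ avoids the second derived subgroup by means of the Alexander module, and match this with the lower bound coming from Corollary~\ref{cor:torus-knot}. For the torus knot I use $G=\langle x,y\mid x^{p^{a}}=y^{q^{b}}\rangle$, whose centre is generated by $z=x^{p^{a}}=y^{q^{b}}$ and whose central quotient is the free product $G/\langle z\rangle\cong\Z_{p^{a}}*\Z_{q^{b}}$. For a $(p^{a},q^{b})$-cable the same relation sits inside the Seifert piece: the cable space $C$ carries a Seifert fibration whose regular fibre $z$ is central in the incompressible subgroup $\pi_{1}(C)\hookrightarrow G(K)$, and $\pi_{1}(C)$ contains $x,y$ with $x^{p^{a}}=y^{q^{b}}=z$. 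In both cases set $u=x^{p^{a-1}}$ and $v=y$, so that $u^{p}=x^{p^{a}}=z$ commutes with $v$ while $[u,v]\neq1$: in the torus case this holds because the images of $u,v$ lie in different free factors of $\Z_{p^{a}}*\Z_{q^{b}}$ (here $\bar u$ has order $p$ and $\bar v$ order $q^{b}$), and the cable case follows from the surjection onto the torus knot group constructed below. Lemma~\ref{lemma:generalized-torsion-simple} with $n=p$, $m=1$ then shows that $g:=[u,v]=[x^{p^{a-1}},y]$ is a non-trivial generalized torsion element with $p\in t(g)$, hence $gord(g)\le p$.

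Next I would prove $g\notin G'':=[[G,G],[G,G]]$, and here it suffices to treat the torus knot. For a cable $K=C_{p^{a},q^{b}}(J)$, unknotting the companion induces a surjection $f\colon G(K)\twoheadrightarrow G(T_{p^{a},q^{b}})$ carrying $x,y$ to the torus generators; since $f(G(K)'')\subset G(T_{p^{a},q^{b}})''$, non-membership for the torus knot forces it for the cable. For the torus knot the Alexander module $M=G'/G''\cong\Lambda/(\Delta_{K})$ is cyclic, generated by some $\bar e$, and a Fox-calculus computation (using $\alpha(x)=q^{b}$, $\alpha(y)=p^{a}$) gives the image of $g$ as $\gamma\bar e$ with $\gamma\doteq(t-1)\dfrac{t^{q^{b}p^{a-1}}-1}{t^{q^{b}}-1}$. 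Thus $\bar g=0$ would force $\dfrac{t^{p^{a}q^{b}}-1}{t^{p^{a}}-1}$ to divide $t^{q^{b}p^{a-1}}-1$; but every primitive $p^{a}q^{b}$-th root of unity is a root of the former and not of the latter, so the divisibility fails and $\bar g\neq0$, that is, $g\notin G''$.

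For the lower bound, Corollary~\ref{cor:torus-knot} gives $t(g)\subset p\N\cup\N_{\ge q}$ in the torus case, and in the cable case the monotonicity Lemma~\ref{lemma:monotonicity} applied to $f$ together with Corollary~\ref{cor:torus-knot} gives $t_{G(K)}(g)\subset t(f(g))\subset p\N\cup\N_{\ge q}$. Since $g\neq1$ we have $gord(g)\ge2$, and because $p<q$ the only integer of $p\N\cup\N_{\ge q}$ lying in $\{2,\dots,p\}$ is $p$ itself; together with $p\in t(g)$ this yields $gord(g)=p$.

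The step I expect to be the main obstacle is the non-membership $g\notin G''$. What matters is not merely that the Alexander-module class is non-zero, but that the factor of $\Delta_{K}$ controlling $t(g)$ is the cyclotomic (torus-knot) part: for a cable the extra factor $\Delta_{J}(t^{p^{a}})$ could a priori weaken the bound, and this is exactly why I route both the non-membership and the lower bound through the surjection $f$ onto the torus knot group rather than analysing $\Delta_{K}$ directly. A secondary technical point is extracting the generators $x,y$ and the central fibre $z$ from the Seifert structure of the cable space.
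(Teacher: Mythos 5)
Your torus-knot case is correct and is essentially the paper's own argument: the same element (up to inverse and relabelling of generators), the upper bound $p\in t(g)$ from Lemma \ref{lemma:generalized-torsion-simple} applied to $[u^{p},v]=1$, and a lower bound coming from the fact that the relevant Alexander polynomial factors are the cyclotomic polynomials $\Phi_{p^{a'}q^{b'}}$. (The paper invokes Theorem \ref{theorem:main} together with Example \ref{example:cyclotomic} rather than Corollary \ref{cor:torus-knot}, but your route is legitimate and non-circular, since that corollary is proved independently of this proposition.) Your Fox-calculus verification that the class of $g$ in $\Lambda/(\Delta_K)$ is $(t-1)(t^{q^{b}p^{a-1}}-1)/(t^{q^{b}}-1)$ times a generator, hence nonzero, is correct and is actually an improvement: the paper merely asserts that the element avoids the second derived subgroup. (A shorter proof: $G'$ maps isomorphically onto the commutator subgroup of $G/\langle z\rangle\cong\Z_{p^{a}}\ast\Z_{q^{b}}$, which is free with basis the commutators $[\bar x^{i},\bar y^{j}]$, and $[\bar x^{p^{a-1}},\bar y]$ is one of the basis elements.)

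The cable case, however, has a genuine gap, located exactly at what you dismissed as a ``secondary technical point.'' The $(p^{a},q^{b})$-cable space is Seifert fibered over the \emph{annulus} with only \emph{one} exceptional fiber, of multiplicity $p^{a}$; it is the torus-knot exterior (fibered over the disk, with two exceptional fibers) that carries two. Concretely, $\pi_{1}(C)\cong\langle \mu,\lambda,c\mid[\mu,\lambda]=1,\ \mu^{q^{b}}\lambda^{p^{a}}=c^{p^{a}}\rangle$, where $\mu,\lambda$ are the meridian and longitude of the companion and $c$ is the core of the cabling solid torus, and the regular fiber is $z=c^{p^{a}}=\mu^{q^{b}}\lambda^{p^{a}}$. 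This group contains no element $y$ with $y^{q^{b}}=z$: in the quotient $\pi_{1}(C)/\langle z\rangle\cong\Z\ast\Z_{p^{a}}$ such a $y$ would become a torsion element whose order divides both $q^{b}$ and $p^{a}$, hence trivial, so $y$ would lie in $\langle z\rangle$, which is impossible since $z$ has infinite order. So the pair $x,y$ with $x^{p^{a}}=y^{q^{b}}=z$ that your construction requires does not exist in $\pi_{1}(C)$, and your element $g=[u,v]$ is simply undefined for genuine cables; no surjection onto the torus knot group can rescue an element that has not been constructed. The repair is the paper's choice: take $u=c^{p^{a-1}}$ and $v=\mu$. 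Then $[u^{p},v]=[\mu^{q^{b}}\lambda^{p^{a}},\mu]=1$, while $[u,v]\neq 1$ (its image under the map induced by abelianizing $G(C)$ is the nontrivial torus-knot commutator), so Lemma \ref{lemma:generalized-torsion-simple} again gives $p\in t([u,v])$; and since that map sends $\mu$ and $c$ to the two generators of $G(T_{p^{a},q^{b}})$, your surjection argument for non-membership in the second derived subgroup and for the lower bound runs verbatim. With this substitution your proof coincides with the paper's, which organizes everything through the amalgamated decomposition $G(K)=G(C)\ast_{\mu^{q^{b}}\lambda^{p^{a}}=c^{p^{a}}}\langle c\rangle$.
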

\begin{proof}
Assume that $K$ is a $(p^{a},q^{b})$-cable of a knot $C$ where we allow $C$ to be the trivial knot (in such case, $K$ is just the $(p^{a},q^{b})$-torus knot).
Then its knot group is the amalgamated free product $G(K) = G(C) \ast_{\mu^{q^b}\lambda^{p^a}=y^{p^a}} \langle y \rangle$, where $\mu$ and $\lambda$ are the meridian and the longitude of $P$.
Since $[\mu,y^{p^{a-1}}] \neq 1$ but $[\mu,y^{p^{a}}]=[\mu,\mu^{q^b}\lambda^{p^a}]=1$, by Lemma \ref{lemma:generalized-torsion-simple} $p \in t_{G(K)}([\mu,y^{p^{a-1}}])$. Furthermore, $[\mu,y^{p^{a-1}}] \not \in \not \in [[G(K),G(K)],[G(K),G(K)]]$.

Let $\pi:G(C)\rightarrow \Z$ be the projection map, $x = \pi(\mu)$ and let $G=G(T_{a,b})$ be the torus knot group. The projection induces the epimorphism 
\[ \pi:G(K)=G(C) \ast_{\mu^{q^b}\lambda^{p^a}=y^{p^a}}\Z \rightarrow \Z \ast_{x^{q^b} = y^{p^a}} \Z = G(T_{p^a,q^b})=G\]
Since $\Delta_{T_{p^{a},q^{b}}}(t)=\frac{(t^{p^aq^{b}}-1)(t-1)}{(t^{p^{a}}-1)(t^{q^{b}}-1)}$, the irreducible factors of $\Delta_{T_{p^{a},q^{b}}}(t)$ are cyclotomic polynomial $\Phi_{p^{a'}q^{b'}}(t)$ for some $0\leq a' \leq a, 0 \leq b' \leq b$. Since $f(\pi([\mu,y^{p^{a-1}}])) \not \in [[G,G],[G,G]]$, by Theorem \ref{theorem:main}, Example \ref{example:cyclotomic} and the monotonicity 
\[ t_{G(K)}([\mu,y^{p^{a-1}}]) \subset t_{G}(\pi([\mu,y^{p^{a-1}}])) \subset p\N \cup \N_{\geq q}.\]
Therefore $gord_{G(K)}([\mu,y^{p^{a-1}}])=p$.
\end{proof}

To get restrictions of $t(g)$ for an element $g \in [[G,G],[G,G]]$ from the Alexander polynomials we use Corollary \ref{cor:Alex-H-series} or Corollary \ref{cor:Alex-var3}. Here we give one situation where we can utilize Corollary \ref{cor:Alex-var3} effectively.

\begin{theorem}
\label{theorem:knot}
Let $K$ be a knot in $S^{3}$ and $G=G(K)$ be its knot group.
Assume that $[G,G]$ is residually torsion-free nilpotent and that $\deg \Delta_K(t)=2g(K)$ where $g(K)$ is the genus of $K$.
If $\Delta_K(t)$ divides $(t^{k}-1)$ where $k=p^{a}q^{b}$ for some distinct primes $p,q$ ($p<q$), then for every $g \in G$, 
\[ t(g) \subset p\N \cup \N_{\geq q}\]
\end{theorem}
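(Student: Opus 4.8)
The plan is to combine the abelianization with the Alexander-module machinery of Corollary~\ref{cor:Alex-var3}, and then to pin down the irreducible factors of $\Delta_0$ of every tensor power of the knot Alexander module. Throughout set $N=[G,G]$ and $\Lambda=\Z[t^{\pm 1}]$, and note $G/N=\Z$. We may assume $g\neq 1$ (the identity is excluded). If $g\notin N$, the abelianization $\phi\colon G\to\Z$ has $\phi(g)\neq 0$; since $\Z$ is torsion-free abelian it has no generalized torsion element, so Lemma~\ref{lemma:monotonicity} gives $t(g)\subset t_{\Z}(\phi(g))=\emptyset$, and we are done. Hence it remains to treat $1\neq g\in N$.

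Since $[G,G]$ is residually torsion-free nilpotent it is residually nilpotent, so $\bigcap_{\ell}\gamma_\ell(N)=1$ (here $\gamma_\ell(N)=\gamma^{N}_\ell N$ is the ordinary lower central series). Thus there is an $\ell\geq 1$ with $g\in\gamma_\ell(N)\setminus\gamma_{\ell+1}(N)$, and in particular $g\notin\gamma^{N}_\infty N=1$. I would apply Corollary~\ref{cor:Alex-var3} to the Alexander tuple $\mathcal A=(G;([[G,G],[G,G]],[G,G],[G,G]))$, i.e.\ with $H=N=[G,G]$: its Alexander module $M=[G,G]/[[G,G],[G,G]]$ is the classical knot Alexander module, a $\Lambda$-torsion module with $\Delta_0(M)\doteq\Delta_K(t)$, so the corollary yields $t(g)\subset t(h(t))$ for some irreducible factor $h(t)$ of $\Delta_0(M^{\otimes \ell})$. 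It therefore suffices to show that, for every $\ell\geq 1$, each irreducible factor of $\Delta_0(M^{\otimes\ell})$ is a cyclotomic polynomial $\Phi_d$ with $d\mid k=p^aq^b$.

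The key is an integral model for $M$. Because $\Delta_K$ divides $t^k-1=\prod_{d\mid k}\Phi_d(t)$ it is a product of cyclotomic polynomials, hence monic with constant term $\pm1$; together with the hypothesis $\deg\Delta_K=2g(K)$ this forces the Seifert matrix $V$ of a minimal-genus Seifert surface (of size $2g(K)$, with $\Delta_K\doteq\det(tV-V^{T})$ of leading coefficient $\det V$) to be unimodular. Consequently $M\cong\Z^{2g(K)}$ as an abelian group, and the deck transformation acts as an integer matrix $A\in\mathrm{GL}_{2g(K)}(\Z)$ whose characteristic polynomial is $\doteq\Delta_K$; thus the eigenvalues of $A$ are the roots of $\Delta_K$, all of which are $k$-th roots of unity. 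Then $M^{\otimes\ell}\cong\Z^{(2g(K))^{\ell}}$ carries the diagonal action, so $t$ acts as the Kronecker power $A^{\otimes\ell}$, whose eigenvalues are the $\ell$-fold products of eigenvalues of $A$. A product of $k$-th roots of unity is again a $k$-th root of unity, so every eigenvalue of $A^{\otimes\ell}$ is a $k$-th root of unity, and $\Delta_0(M^{\otimes\ell})\doteq\det(tI-A^{\otimes\ell})$ has all of its roots among the $k$-th roots of unity. Hence its irreducible factors are exactly the $\Phi_d$ with $d\mid k$, as claimed. Finally, every divisor $d\mid k=p^aq^b$ with $d>1$ gives $t(\Phi_d)\subset p\N\cup\N_{\geq q}$ by Example~\ref{example:cyclotomic} (and $t(\Phi_1)=\emptyset$), so $t(g)\subset t(h)\subset p\N\cup\N_{\geq q}$.

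The main obstacle is the middle step: extracting the clean integral model $M\cong\Z^{2g(K)}$ with $t$ acting invertibly over $\Z$ and with roots-of-unity eigenvalues. The divisibility $\Delta_K\mid t^k-1$ supplies both monicity and the fact that the eigenvalues are $k$-th roots of unity, while $\deg\Delta_K=2g(K)$ is precisely what makes the Seifert form nonsingular, hence unimodular, so that $t$ acts on a finitely generated free abelian group; residual torsion-free nilpotence enters only through residual nilpotence, guaranteeing that every nontrivial $g$ lies at a finite level of the lower central series of $[G,G]$. Once this model is in hand, the tensor-power eigenvalue bookkeeping and the cyclotomic evaluation of Example~\ref{example:cyclotomic} are routine.
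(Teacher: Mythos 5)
Your proof is correct and follows essentially the same route as the paper: reduce via Corollary \ref{cor:Alex-var3} (using residual nilpotence of $[G,G]$ to place $g$ at a finite stage of the lower central series), realize the Alexander module as $\Z^{2g(K)}$ with $t$ acting by an integer matrix whose characteristic polynomial is $\doteq\Delta_K(t)$, note that the eigenvalues of its Kronecker powers are still $k$-th roots of unity so every irreducible factor of $\Delta_0(M^{\otimes \ell})$ is a cyclotomic polynomial $\Phi_d$ with $d \mid p^aq^b$, and finish with Example \ref{example:cyclotomic}. The only deviation is cosmetic: you obtain the integral model from unimodularity of the Seifert matrix, whereas the paper cites Goda--Sakasai's result that homologically fibered knots have an Alexander module with square presentation matrix $tI_{2g}-S$ (the same fact), and your explicit handling of the case $g \notin [G,G]$ via the abelianization is a small but valid addition.
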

\begin{proof}
Since $\Delta_K(t)$ divides $(t^{k}-1)$, $\Delta_K(t)$ is monic. A knot having the properties that $\deg \Delta_K(t)=2g(K)$  and that $\Delta_K(t)$ is monic is called (integrally) \emph{homologically fibered knot}. For such a knot, the Alexander module $M$ of $G(K)$ has a square presentation matrix of the form $A=tI_{2g}-S$ where $S$ is certain $2g \times 2g$ integer matrix \cite{GoSa}.

By the definition of the tensor product module $M^{\otimes m}$, $M^{\otimes m}$ has a presentation matrix $A_m=tI_{(2g)^{m}}-S^{\otimes m}$ where $S^{\otimes m}: (\Z^{2g})^{\otimes m} \rightarrow (\Z^{2g})^{\otimes m}$ is the tensor product of $S^{\otimes m}$.

Let $\alpha_1,\ldots,\alpha_{2g} \in \C$ be the roots of the Alexander polynomial $\Delta_K(t)$. Then for $m \geq 1$
\[ \Delta_0(M^{\otimes m}) =\det(tI_{(2g)^{m}}-S^{\otimes m}) = \prod_{i_1=1}^{2g}\prod_{i_2=1}^{2g}\cdots \prod_{i_m=1}^{2g} (t-\alpha_{i_1}\alpha_{i_2}\cdots \alpha_{i_m})\]

Since $\Delta_K(t)$ divides $t^{k}-1$, $\alpha_{1},\ldots,\alpha_{2g}$ are (not necessarily primitive) $k$-th root of unities. Thus their product $\alpha_{i_1}\alpha_{i_2}\cdots \alpha_{i_m}$ are also the $k$-th root of unities. 
Since $k=p^{a}q^{b}$, every irreducible factor of $\Delta_0(M^{\otimes m})$ is a cyclotomic polynomial $\Phi_{p^{a'}q^{b'}}$ ($0\leq a' \leq a$, $0 \leq b' \leq b$).
Thus by Corollary \ref{cor:Alex-var3}
\[ t(g) \subset t(\Phi_{p^{a'}q^{b'}}) \]
for some $a',b'$ hence by Example \ref{example:cyclotomic}, $t(g) \subset p\N \cup \N_{\geq q}$.
\end{proof}

The most fundamental example of knots satisfying the assumptions $[G,G]$ is residually torsion-free nilpotent and that $\deg \Delta_K(t)=2g(K)$ of Theorem \ref{theorem:knot} is \emph{fibered knots}, a knot whose complements has a structure of a surface bundle over the circle.

Since the torus knot satisfies the assumptions, we get the following.
\begin{corollary}
\label{cor:torus-knot}
Let $K$ be the $(p^{a},q^{b})$-torus knot where $p<q$ are primes. Then for every $g \in G(K)$, $t(g) \subset p\mathbb{N} \cup \mathbb{N}_{\geq q}$. 
\end{corollary}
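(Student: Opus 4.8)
The plan is to derive Corollary~\ref{cor:torus-knot} directly as a special case of Theorem~\ref{theorem:knot}, so the main task is to verify that the $(p^a,q^b)$-torus knot satisfies all three hypotheses of that theorem. First I would recall the standard fact that every torus knot is fibered: the complement of $T_{p^a,q^b}$ fibers over the circle with fiber a Seifert surface of minimal genus. As noted in the remark following Theorem~\ref{theorem:knot}, a fibered knot automatically satisfies $\deg\Delta_K(t)=2g(K)$ (the Alexander polynomial of a fibered knot is monic of degree $2g(K)$). This disposes of the degree hypothesis.

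Second, I would verify the divisibility hypothesis. Using the well-known formula for the torus knot Alexander polynomial,
\[ \Delta_{T_{p^a,q^b}}(t)=\frac{(t^{p^aq^b}-1)(t-1)}{(t^{p^a}-1)(t^{q^b}-1)}, \]
one sees that every root is a $p^aq^b$-th root of unity, so with $k=p^aq^b$ the polynomial $\Delta_K(t)$ divides $t^{k}-1$. Here $p<q$ are primes and $k=p^aq^b$ has exactly the required form for Theorem~\ref{theorem:knot}. This is essentially the same computation already used in the proof of Proposition~\ref{prop:torus-cable}, where the irreducible factors were identified as cyclotomic polynomials $\Phi_{p^{a'}q^{b'}}$.

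Third, and this is the step I expect to be the genuine obstacle, I must check that the commutator subgroup $[G,G]$ is residually torsion-free nilpotent. For torus knots this is a known result: the commutator subgroup of a torus knot group is a free group (of finite rank, since torus knots are fibered with free fiber group), and finitely generated free groups are residually torsion-free nilpotent (for instance, by Magnus's theorem embedding a free group into the group of units of a formal power series ring, which is residually torsion-free nilpotent). So I would cite this structural fact about $[G,G]$ being free, then invoke residual torsion-free nilpotence of free groups.

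With all three hypotheses verified, Theorem~\ref{theorem:knot} applies directly and yields $t(g)\subset p\N\cup\N_{\geq q}$ for every $g\in G(K)$, which is exactly the assertion of the corollary. The only subtlety worth double-checking is that the fiber genus equals $g(K)$ and that the monic/degree condition is stated consistently; but since fibered knots are the paradigmatic example flagged in the paper's own remark, I expect the invocation to be clean and the proof to reduce to the two-line citation-plus-formula structure above.
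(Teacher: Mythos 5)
Your proposal is correct and follows the paper's own route: the paper obtains the corollary by observing that the torus knot, being fibered, satisfies the hypotheses of Theorem~\ref{theorem:knot} with $k=p^{a}q^{b}$, exactly as you do. The details you supply --- the fiber surface gives $[G,G]$ free of rank $2g(K)$, hence residually torsion-free nilpotent by Magnus, and the formula $\Delta_{T_{p^a,q^b}}(t)=\frac{(t^{p^aq^b}-1)(t-1)}{(t^{p^a}-1)(t^{q^b}-1)}$ gives the divisibility of $t^{k}-1$ --- are precisely the verification the paper leaves implicit in its remark that fibered knots satisfy the assumptions.
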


\begin{example}
Let $K$ be the $(2,5)$ torus knot $K=T_{2,5}$ or the knot $10_{132}$. They are fibered knots with the Alexander polynomial $\Delta_{T_{2,5}}(t)=t^{4}-t^{3}+t^{2}-t+1 = \Phi_{10}(t)$. By Theorem \ref{theorem:knot}, $t(g) \subset 2\mathbb{N} \cup \mathbb{N}_{\geq 5}$ for every $g \in G(K)$. In particular, $G(K)$ has no generalized torsion element of generalized torsion order $3$. 

On the other hand, the knot $T_{2,5}$ has a generalized torsion element of generalized torsion order $2$, whereas the knot $10_{132}$ has no generalized torsion element of generalized torsion order $2$ (because it is hyperbolic). 
\end{example}

Our result also can be used determine the generalized torsion order.
\begin{example}[Himeno's generalized torsion element \cite{Him}]
Let $K=T_{2,k}$ be the $(2,k)$-torus knot with $k>3$. Himeno showed for $n>0$, the element $E_n \in G(K)= \langle a,b \: | \: a^{2}=b^{k}\rangle = G(K)$ given by 
\[ E_n=[a,b]^n[a,b^{2}][a,b]^{n+1}[a,b^{k-1}],\]
satisfies $4 \in t(E_n)$. He verified $gord(E_1)=4$ for small $k$ by using computer calculations of scl \cite{Him}.
 
It is easy to check that $2 \not \in t(E_n)$ because $E_n$ and $E_n^{-1}$ are not conjugate. If $k=q^{e}$ is a power of a prime $q>3$, $3 \not \in t(E_n)$ by Theorem \ref{theorem:knot}. Therefore $gord(E_n)=4$ as expected.
\end{example}

A knot satisfying the condition $\deg \Delta_K(t)=2g(K)$ is called \emph{rationally homologically fibered knot}. In \cite{It} we showed that for a rationally homologically fibered knot $K$, if $\Delta_K(t)$ has no positive real root then $G=G(K)$ is not bi-orderable. 

On the other hand, our argument shows the following (though we state it for the knot group, the same is true for general groups).

\begin{proposition}
Let $K$ be a knot in $S^{3}$ and $G=G(K)$ be the knot group. If $\Delta_K(t)$ has no positive real root then $G \slash[[G,G],[G,G]]$ has a generalized torsion element.
\end{proposition}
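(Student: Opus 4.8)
The plan is to exploit the connection established in Lemma \ref{lemma:t-for-Amod} and Theorem \ref{theorem:main}, which translate the existence of a generalized torsion element in a suitable quotient into a positivity statement about the annihilator ideal of the Alexander module. The target group $G/[[G,G],[G,G]]$ is exactly the quotient $G/X$ associated to the Alexander tuple $\mathcal{A}=(G;([[G,G],[G,G]],[G,G],[G,G]))$ of Example \ref{example:A-knot}, whose Alexander module is the classical knot Alexander module $M$ over $\Lambda=\Z[t^{\pm 1}]$ and whose Alexander polynomial is $\Delta_K(t)$. So by Lemma \ref{lemma:t-for-Amod}, producing a generalized torsion element in $G/[[G,G],[G,G]]$ amounts to finding a nonzero $m \in M$ together with a positive element in its annihilator, equivalently some irreducible factor $h(t)$ of $\Delta_K(t)$ with $t(h) \neq \emptyset$.

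First I would invoke Lemma \ref{lemma:elementary-estimate}(i): for an irreducible $h(t)$, the set $t(h)$ is nonempty if and only if $h$ has no positive real root. The hypothesis is that $\Delta_K(t)$ has no positive real root; since $\Delta_K(t)$ factors into irreducibles over $\Z[t^{\pm 1}]$ and a positive real root of $\Delta_K$ would be a positive real root of one of its factors, at least one irreducible factor $h(t)$ must itself have no positive real root. (Indeed none of the factors has a positive real root, but one suffices.) For that factor, $t(h)\neq\emptyset$, so there is a positive $f(t)\in(h(t))\subset\Lambda$.

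Next I would realize this positive annihilating element geometrically. By the construction of the Alexander module, $\Delta_K(t)=\Delta_0(TM)$ (up to units), so the chosen irreducible factor $h(t)$ divides $\Delta_0(TM)$ and hence annihilates some nonzero torsion element; more directly, since $M$ is a torsion $\Lambda$-module with $\Delta_0(TM)\doteq\Delta_K(t)$, there exists a nonzero $m\in M$ whose annihilator is contained in $(h(t))$. Pulling $m$ back to a $g\in[G,G]=N$ with $g\notin X=[[G,G],[G,G]]$, the positive element $f\in\Ann(m)$ translates—via the computation in the proof of Lemma \ref{lemma:t-for-Amod}—into an equation $q(g)^{x_1}\cdots q(g)^{x_n}=1$ in $G/X$ with $n=\varepsilon(f)\geq 1$. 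Thus $q(g)$ is a genuine generalized torsion element of $G/[[G,G],[G,G]]$.

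The main obstacle is the second step: ensuring the chosen factor $h(t)$ actually annihilates a \emph{nonzero} cyclic submodule, so that the resulting $g$ lies outside $X$ and $q(g)\neq 1$. This requires knowing that an irreducible factor of $\Delta_0(TM)$ with $t(h)\neq\emptyset$ occurs as the annihilator-divisor of some nonzero element, which follows from the structure theory relating $\Delta_0(TM)$ to the elementary ideals and annihilators in Proposition \ref{prop:Alex-Ann}; concretely, a prime $(h(t))$ dividing $\Delta_0(TM)$ lies in the support of $M$, so $M$ has a nonzero element killed by a power of $h$, and one replaces $f$ by a positive multiple lying in that annihilator if necessary. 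Once a nonzero $m$ with $\Ann(m)\subset(h(t))$ and $t(h)\neq\emptyset$ is secured, the positivity translation is routine and the proposition follows.
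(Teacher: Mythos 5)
Your proposal is correct in substance and rests on exactly the two ingredients the paper's proof cites: Lemma \ref{lemma:t-for-Amod}, which reduces the problem to exhibiting a nonzero element $m$ of the knot Alexander module $M=[G,G]/[[G,G],[G,G]]$ whose annihilator contains a positive polynomial, and Lemma \ref{lemma:elementary-estimate}(i), which supplies the positivity from the no-positive-real-root hypothesis. Two points of comparison. First, a recurring slip in your write-up: twice you assert that what must be secured is a nonzero $m$ with $\Ann(m)\subset (h(t))$. That inclusion is the one used in the proof of Theorem \ref{theorem:main}, where it serves the \emph{opposite} purpose (bounding $t(m)$ from above), and it would not let you deduce $f\in\Ann(m)$ from $f\in(h(t))$. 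What you need --- and what your support/localization argument actually produces --- is the reverse containment: a nonzero $m$ with $h^k m=0$, i.e.\ $(h^k)\subset\Ann(m)$, after which a positive $f\in(h)$ yields the positive element $f^k\in\Ann(m)$. Your verbal formulations (``$h$ annihilates some nonzero torsion element'', ``killed by a power of $h$'') are the correct ones; the displayed inclusions should be reversed to match them. Second, the paper's intended argument is shorter and avoids the support argument entirely: the Alexander module of a knot is a torsion module with a square presentation matrix, so $\Delta_K(t)$ annihilates all of $M$ (by Cramer's rule, or via $\Ann(M)=(\Delta_0(M)/\Delta_1(M))$ from Proposition \ref{prop:Alex-Ann}(iii), noting that $\Delta_0/\Delta_1$ divides $\Delta_K$ and so also has no positive real root); applying the Dubickas positivity result behind Lemma \ref{lemma:elementary-estimate}(i) to this single annihilating polynomial gives a positive element of $\Ann(m)$ for \emph{every} $m$, so every nontrivial element of $M$ is a generalized torsion element of $G/[[G,G],[G,G]]$, with no need to locate a special element attached to one irreducible factor. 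Finally, both your argument and the paper's implicitly assume $M\neq 0$, equivalently $\Delta_K\not\doteq 1$: when $\Delta_K\doteq 1$ the hypothesis holds vacuously but $G/[[G,G],[G,G]]\cong\Z$ has no generalized torsion element; this caveat is shared with the paper and is not a defect particular to your proof.
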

\begin{proof}
This is an immediate consequence of Lemma \ref{lemma:t-for-Amod} and Lemma \ref{lemma:elementary-estimate} (i).
\end{proof}
This gives a partial answer to \cite[Question 6.6]{NR} where it asks the existence of generalized torsion if $G(K)$ when $\Delta_K(t)$ has no positive real root.

It is conjectured that for 3-manifold groups (that includes the knot groups), the bi-orderability is equivalent to the non-existence of generalized torsion element \cite{MT,IMT2}. In this prospect, it is interesting to ask
\begin{question}
\label{ques:Q-h-fiber}
If $K$ is rationally homologically fibered and $\Delta_K(t)$ has no positive real root, is there a generalized torsion element $g \in G=G(K)$ such that $g \not \in [[G,G],[G,G]]$ ?
\end{question}

\section{Generalized torsion order spectrum}

For a group $G$, the \emph{torsion order spectrum} $ord(G)$ is the set defined by
\[ ord(G) =\{ ord(g)\: | \: g \mbox{ is a torsion element of } G\}. \]
It is easy to see that $ord(G)$ cannot be arbitrary. because $ord(G)$ is \emph{factor-complete}, which means that if $pq \in t(G)$ with $p,q \neq 1$ then $p,q \in t(G)$. 

The torsion order spectrum is characterized as follows \cite{Ch}.

\begin{theorem}
\label{theorem:order-spectrum}
{$ $}
\begin{itemize}
\item[(i)] For a factor-complete subset $A$ of $\N_{\geq 2}$, there exists a finitely generated group $G$ such that $ord(G)=A$.
\item[(ii)] For a factor-complete subset $A$ of $\N_{\geq 2}$, there exists a finitely presented group $G$ such that $ord(G)=A$ if and only if $A$ is a  $\Sigma_2^{0}$-set (\cite[Theorem 6.3]{Ch}).
\end{itemize}
\end{theorem}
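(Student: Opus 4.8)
The plan is to prove the three implications separately, using throughout the already-noted necessity of factor-completeness: if $g$ has order $pq$ with $p,q>1$, then $g^{p}$ and $g^{q}$ witness $p,q\in ord(G)$, so any realizable $ord(G)$ must be factor-complete. It then remains to realize such sets, with the refinement to finitely presented groups governed by recursion-theoretic complexity.

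For the finitely generated realization in (i), I would start from the free product $H=\ast_{n\in A}\Z\slash n\Z$. Since every torsion element of a free product is conjugate into a free factor (normal form theorem), and $\Z\slash n\Z$ contains an element of order $d$ for each divisor $d\geq 2$ of $n$, factor-completeness of $A$ forces $ord(H)=A$ exactly. The group $H$ is countable, so by the Higman--Neumann--Neumann embedding theorem it embeds into a $2$-generated group $G$. That embedding is assembled from HNN extensions and amalgamated free products over infinite cyclic (hence torsion-free) associated subgroups, and in each such construction every element of finite order is conjugate into the base (Britton's lemma). Consequently no torsion is created or destroyed, so $ord(G)=ord(H)=A$.

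For the complexity bound in (ii), fix a finite presentation $G=\langle S\mid R\rangle$. Its word problem is recursively enumerable, so there is a recursive predicate $P(w,k,t)$ expressing ``$w^{k}=1$ has a derivation of length $t$''. Then
\[
n\in ord(G)\iff \exists w\,\exists t\,\forall s\,\Big(P(w,n,t)\wedge\bigwedge_{0<m<n}\neg P(w,m,s)\Big),
\]
since the $\Sigma_1^{0}$ condition $w^{n}=1$ and the $\Pi_1^{0}$ condition ``$w^{m}\neq 1$ for all $0<m<n$'' combine, after pulling the independent quantifiers $\exists t$ and $\forall s$ to the front, into a single $\exists\exists\forall$ prefix. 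Hence $ord(G)$ is $\Sigma_2^{0}$.

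The genuinely hard direction is the realization in (ii): given a factor-complete $\Sigma_2^{0}$ set $A$, produce a \emph{finitely presented} $G$ with $ord(G)=A$. Here I would first reduce to a recursively presented group: the $\Sigma_2^{0}$ description of $A$ permits a recursive bookkeeping that enumerates, stage by stage, relations imposing precisely the orders listed in $A$, yielding a recursively presented $H$ with $ord(H)=A$ (the analogue of the free-product construction of (i), now with the orders introduced recursively). By Higman's embedding theorem, $H$ embeds into a finitely presented group. The main obstacle is that a naive Higman embedding can introduce spurious torsion or collapse distinct torsion orders; what is required is a \emph{torsion-preserving} refinement of the embedding, guaranteeing that the finitely presented overgroup $G$ satisfies $ord(G)=ord(H)=A$, together with the verification that the $\Sigma_2^{0}$ hypothesis is exactly what makes the intermediate group recursively presentable with the correct torsion spectrum. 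This crux — the interplay between the arithmetical complexity of $A$ and the control of torsion across Higman's construction — is precisely what is carried out in \cite[Theorem 6.3]{Ch}, which I would invoke rather than reprove.
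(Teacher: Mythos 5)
The paper gives no proof of this statement at all---it is quoted directly from \cite[Theorem 6.3]{Ch}---so your proposal, whose one genuinely hard step (realizing an arbitrary factor-complete $\Sigma_2^{0}$ set by a finitely presented group) is deferred to exactly that reference, is consistent with the paper's treatment; moreover, your sketches of part (i) (the free product $\ast_{n\in A}\Z\slash n\Z$ followed by a torsion-preserving Higman--Neumann--Neumann embedding) and of the $\Sigma_2^{0}$ upper bound in part (ii) are correct and in fact follow Chiodo's own arguments. The only slip is cosmetic: the associated subgroups in the HNN embedding are free of infinite rank rather than infinite cyclic, but this is immaterial since the torsion theorem for HNN extensions (every finite-order element is conjugate into the base) holds unconditionally.
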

Here $\Sigma_2^{0}$-set is a set appeared in a theory of arithmetical hierarchy, and is larger than $\Sigma_1^{0}$-set, the recursively enumerable sets.

As a natural generalization of torsion order spectrum, it is natural to investigate the following set.

\begin{definition}
Let $G$ be a group.
The \emph{generalized torsion order spectrum} $go(G)$ of $G$ is  
\[ gord(G)= \{gord(g) \: | \: g \mbox{ is a generalized torsion element of } G\}. \]
The \emph{strict generalized torsion order spectrum} $gt^{*}(G)$ of $G$ is 
\[ gord^{*}(G)= \{gord(g) \: | \: g \mbox{ is a non-torsion, generalized torsion element of }G\}. \]
\end{definition}

Unlike torsion order spectrum, generalized torsion order spectrum is not necessarily  factor-complete as the next lemma shows.

\begin{lemma}
For every $n \in \Z_{\geq 2}$, there exists a finitely presented torsion-free group $G_n$ such that $gord^{*}(G_n)=gord(G_n)=\{n\}$.
\end{lemma}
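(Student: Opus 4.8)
The plan is to construct, for each fixed $n \geq 2$, a finitely presented torsion-free group $G_n$ whose only generalized torsion elements have generalized torsion order exactly $n$. The natural building block is a one-relator group modelled on the torus-knot calculations already in the paper: the cyclotomic polynomial $\Phi_{p^a}$ satisfies $t(\Phi_{p^a}) = p\N$ by Example \ref{example:cyclotomic}, so a group whose relevant Alexander polynomial is $\Phi_{p^a}$ should have all generalized torsion orders divisible by $p$, with the minimal one equal to $p$. To realize an arbitrary $n$ rather than only a prime power, I would take the $(n,n+1)$-torus-knot-like relation, or more directly, use a group of the form $\langle a, b \mid a^{x} = a^{-1}\,(\text{suitably})\rangle$ engineered so that the only way to write a product of conjugates trivially uses exactly $n$ factors.

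First I would exhibit a \emph{generalized torsion element of order exactly $n$}. The cleanest source is Lemma \ref{lemma:generalized-torsion-simple}: if one can arrange $[x^n, y] = 1$ while $[x^j, y] \neq 1$ for $0 < j < n$, then $[x,y]$ is a generalized torsion element with $n \in t([x,y])$, and one must then rule out every smaller value. The group I would use is something like $G_n = \langle a, t \mid t a t^{-1} = a^{s} \rangle$ with $s$ a primitive $n$th root of unity mod the relevant structure, or an HNN/one-relator construction whose Alexander module (via an Alexander tuple as in Definition \ref{def:Alex-tuple}) has Alexander polynomial equal to $\Phi_n$ or a product of cyclotomic factors all contributing the same minimal order $n$. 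Torsion-freeness would follow from the group being, e.g., locally indicable or a one-relator group with the relator not a proper power, and finite presentability is automatic for a one-relator presentation.

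Next I would prove the \emph{upper constraint} $gord^{*}(G_n) \subseteq \{n\}$, i.e. that every generalized torsion element has order exactly $n$ and there are no elements of smaller or larger minimal order. Here Theorem \ref{theorem:main} is the main tool: for any candidate generalized torsion element $g$, some irreducible factor $h$ of the Alexander polynomial controls $t(g)$ via $t(g) \subset t(h)$. If I design $G_n$ so that every irreducible factor appearing is $\Phi_{p^a}$-type with $t(\Phi_{p^a}) = p\N$, and more generally so that $\min t(h) = n$ for every relevant $h$, then Lemma \ref{lemma:elementary-estimate} and Proposition \ref{prop:AT-root-unity} pin the minimum down to $n$. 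I must also handle elements landing in the deeper terms of the derived/central series by invoking Corollary \ref{cor:Alex-H-series} or Corollary \ref{cor:Alex-var3} so that no element escapes the Alexander-polynomial bound.

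The hard part will be the \emph{rigidity}: showing that the \emph{only} generalized torsion orders occurring are $\{n\}$ and not a larger set such as $n\N$ or $p\N \cup \N_{\geq q}$. The Alexander criterion gives containments $t(g) \subset p\N$, which bounds the order \emph{below} by $p$ but does not by itself force every generalized torsion element to have minimal order \emph{equal} to a single value $n$; I would need a complementary argument showing no element achieves a strictly larger minimum, and that elements realizing a composite $n$ genuinely require $n$ conjugates rather than fewer. I expect to control this either by a monotonicity/retraction argument (Corollary \ref{cor:monotonicity}) onto a quotient where the spectrum is transparent, or by a direct combinatorial analysis of the one-relator presentation ruling out shorter generalized torsion equations. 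Assembling a single $G_n$ for which all three requirements (torsion-freeness, existence of an order-$n$ element, and exclusion of all other orders) hold simultaneously is the crux, and I would likely settle on the torus-knot group $G(T_{n,n+1})$ or a closely related amalgam when $n$ is a prime power, and build the general case by a fibre-product or graph-of-groups construction that forces the spectrum to be the singleton $\{n\}$.
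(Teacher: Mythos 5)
There is a genuine gap: your proposal never fixes a concrete group, and the two families of candidates you name cannot produce a singleton spectrum $\{n\}$ for general $n$. The cyclotomic route fails for composite $n$: the only cyclotomic polynomials with $t(\Phi_k)$ controlled as you need are prime powers $k=p^a$, where $t(\Phi_{p^a})=p\N$ forces the minimal order to be the \emph{prime} $p$, never a composite $n$; and for $n$ with two distinct prime factors one has $\Phi_n(1)=1$, so Lemma \ref{lemma:elementary-estimate}(ii) gives no divisibility constraint at all and cannot pin the minimum to $n$. The torus-knot route fails for the same structural reason: by Proposition \ref{prop:torus-cable} the group of a $(p^a,q^b)$-torus knot always contains a generalized torsion element of order $p$ (the smallest prime involved), and the best available constraint (Corollary \ref{cor:torus-knot}) is $t(g)\subset p\N\cup\N_{\geq q}$, which is far from a singleton; so $G(T_{n,n+1})$ can never have $gord(G)=\{n\}$ unless $n$ is that smallest prime. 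The ``rigidity'' you flag as the crux is exactly the unsolved part, and deferring it to an unspecified ``fibre-product or graph-of-groups construction'' does not close it.

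The missing idea, which is how the paper proceeds, is to decouple from cyclotomic polynomials entirely: choose \emph{any} irreducible polynomial $h$ with nonnegative coefficients and $h(1)=n$, and realize it as the Alexander polynomial of a metabelian group $A\rtimes\Z$. Concretely, for $n\geq 3$, $n\neq 4$, take $G_n=\langle t,a,b \mid tat^{-1}=a^{-n+2}b,\ tbt^{-1}=a^{-1},\ [a,b]=1\rangle$, an extension $1\to\Z^2\to G_n\to\Z\to 1$ with Alexander polynomial $h(t)=t^{2}+(n-2)t+1$ (for $n=4$ one uses $\Z^3\rtimes\Z$ with $h(t)=t^{3}+2t+1$, since $t^2+2t+1$ is reducible; for $n=2$ the Klein bottle group $\langle a,t\mid tat^{-1}=a^{-1}\rangle$ works directly). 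Then torsion-freeness and finite presentability are immediate; every generalized torsion element must lie in $A=[G_n,G_n]$ because the quotient is $\Z$; positivity of the coefficients of $h$ gives the explicit equation $t^{2}gt^{-2}(tgt^{-1})^{n-2}g=1$, so $n\in t(g)$ for every $1\neq g\in A$; and irreducibility of $h$ together with Theorem \ref{theorem:main} and Lemma \ref{lemma:elementary-estimate}(ii) gives $t(g)\subset h(1)\N=n\N$, whence $gord(g)=n$ exactly. Note how positivity plays the double role your plan lacked: it simultaneously produces the order-$n$ equation (existence) and, via $h(1)=n$ with $h$ irreducible, excludes every smaller order (rigidity).
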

\begin{proof}
When $n=2$, let $G_2 = \langle a,t \: | \: tat^{-1} = a^{-1}\rangle$ be the infinite diherdal group.
If $g \in G_2$ is a generalized torsion element, then $g \in \langle a \rangle$. However, every element in $\langle a \rangle$ is a generalized torsion element of generalized torsion order two because $ta^{k}t^{-1} \cdot a^{k} = 1$ for every $k$. Thus $gord^{*}(G_2)=\{2\}$.\\

For $n \geq 3$ and $n \neq 4$, let $A$ be the free abelian group of rank two generated by $a,b$ and let 
\[ G_n = \langle t, a,b \: | \: tat^{-1}=a^{-n+2}b, tbt^{-1}=a^{-1}, [a,b]= 1 \rangle. \]
$G_N$ is the HNN extension $1 \rightarrow A \rightarrow G \rightarrow \Z =\langle t \rangle$.
If $g \in G_n$ is a generalized torsion element, then $g \in \langle a ,b\rangle$. The Alexander polynomial of $G$ is an irreducible polynomial $t^{2}+(n-2)t +1$.
If $g \in G_n$ is a generalized torsion element, then $g \in [G_n,G_n]=A$. 
For every $1\neq g \in A$, $t^2gt^{-2}(tgt^{-1})^{n-2}g = 1$. Furthermore, by Theorem \ref{theorem:main} and Lemma \ref{lemma:elementary-estimate}, $t(g) \subset n\mathbb{N}$. Thus $gord^{*}(G_n)=\{n\}$.

The group $G_4$ is constructed in a similar manner; 
let $A$ be the free abelian group generated by $a,b,c$. We define 
\[ G_4 = \left\langle t, a,b,c \: \middle| \: \begin{matrix}
 tat^{-1}=b, tbt^{-1}=c, tct^{-1}=a^{-1}b^{-2}\\
 [a,b]=[a,c]=[b,c]=1 \end{matrix}
 \right\rangle. \]
$G_4$ is the HNN extension $1 \rightarrow A \rightarrow G \rightarrow \Z =\langle t \rangle \rightarrow 1$. Its Alexander polynomial is $t^{3}+2t+1$. Since it is irreducible, we conclude $gord^{*}(G_4)=\{4\}$ by the same argument.
\end{proof}

The (strict) generalized torsion order spectrum behaves nicely with respect to the free product.

\begin{theorem}
If $G$ and $H$ are torsion-free, then $gord^{*}(G\ast H) = gord^{*}(G) \cup gord^{*}(H)$.
\end{theorem}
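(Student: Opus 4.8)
The plan is to prove the two inclusions separately, with the forward inclusion $gord^{*}(G\ast H) \subset gord^{*}(G) \cup gord^{*}(H)$ being the substantive direction. First I would dispose of the easy inclusion: if $g \in G$ is a non-torsion generalized torsion element with $gord_G(g)=n$, then since $G$ is a retract of $G\ast H$ (the map $G\ast H \to G$ killing $H$ restricts to the identity on $G$), Corollary \ref{cor:monotonicity}(iii) gives $gord_{G\ast H}(g)=gord_G(g)=n$, and $g$ remains non-torsion in the torsion-free group $G\ast H$. The same holds for $H$, yielding $gord^{*}(G)\cup gord^{*}(H) \subset gord^{*}(G\ast H)$.

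For the forward inclusion, I would take an arbitrary non-torsion generalized torsion element $w \in G\ast H$ and show its generalized torsion order is realized inside one of the factors. The key structural input is the theory of conjugacy in free products: every element of $G\ast H$ is conjugate either to an element of a factor ($G$ or $H$) or to a \emph{cyclically reduced} element of syllable length $\geq 2$. By Corollary \ref{cor:monotonicity}(iv), replacing $w$ by a conjugate does not change $t(w)$ or $gord(w)$, so I may assume $w$ is either in a factor or cyclically reduced of length $\geq 2$. If $w$ lies in a factor, say $w\in G$, then a generalized torsion relation $w^{x_1}\cdots w^{x_n}=1$ in $G\ast H$ must already hold in $G$: applying the retraction $G\ast H \to G$ gives a relation among conjugates of $w$ by elements of $G$, and the retraction argument (again Corollary \ref{cor:monotonicity}(iii)) forces $gord_{G\ast H}(w)=gord_G(w)\in gord^{*}(G)$. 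The heart of the matter is therefore to show that the second case \emph{cannot} produce a generalized torsion element at all.

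The hard part will be establishing that a cyclically reduced element $w$ of syllable length $\geq 2$ in a free product of torsion-free groups is never a generalized torsion element. I would approach this through the normal form for conjugates and products in free products: any conjugate $w^{x}=xwx^{-1}$ is conjugate to a cyclic permutation of the syllables of $w$, hence is again cyclically reduced of the same syllable length $\ell=\ell(w)\geq 2$. When one multiplies $n$ such conjugates $w^{x_1}\cdots w^{x_n}$, the amount of cancellation between adjacent factors is controlled, and one wants to show the product cannot reduce to the identity. The cleanest route is to invoke a length or \emph{scl} obstruction: in a free product of torsion-free groups, a cyclically reduced element of length $\geq 2$ has positive stable commutator length (this is classical; free products of torsion-free groups have no non-trivial generalized torsion, equivalently they are known to be bi-orderable when the factors are, or one uses that $scl$ is bounded below on such elements). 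Then Corollary \ref{cor:mix-scl}, which forces $scl(w)<\tfrac12$ for any generalized torsion element, combined with a lower bound $scl(w)\geq \tfrac12$ for cyclically reduced elements of length $\geq 2$, gives the contradiction showing $t(w)=\emptyset$.

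I expect the genuine obstacle to be pinning down the correct lower bound on the relevant invariant for cyclically reduced elements of length $\geq 2$, and verifying it applies uniformly to all conjugates appearing in a generalized torsion relation. An alternative, more self-contained argument avoids $scl$ entirely: one shows directly that in $G\ast H$ the subgroup generated by the conjugates of such a $w$, together with the action on the Bass--Serre tree of the splitting, precludes a product of conjugates being trivial, since each conjugate acts as a hyperbolic isometry with the same translation length and a relation $w^{x_1}\cdots w^{x_n}=1$ would force an elliptic product. Whichever route is taken, once the length-$\geq 2$ case is excluded, every non-torsion generalized torsion element is (up to conjugacy) conjugate into a factor, and the retraction argument finishes the proof, giving $gord^{*}(G\ast H)\subset gord^{*}(G)\cup gord^{*}(H)$ and hence equality.
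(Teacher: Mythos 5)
Your proposal has the same skeleton as the paper's proof: conjugation-invariance of $gord$ (Corollary \ref{cor:monotonicity}(iv)) reduces everything to elements conjugate into a factor, and the retraction argument (Corollary \ref{cor:monotonicity}(iii)) then identifies $gord_{G\ast H}$ with $gord_G$ or $gord_H$; your easy inclusion is handled identically. The entire difference is the hard step --- that an element not conjugate into a factor (equivalently, cyclically reduced of syllable length $\geq 2$) is never a generalized torsion element --- which the paper does not prove but simply imports as \cite[Theorem 1.5]{IMT1}. Your proposal tries to prove this step from scratch, and that is exactly where it has gaps.

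Of your two routes, the Bass--Serre one is not salvageable as sketched: there is no principle that a product of conjugates of a hyperbolic tree isometry cannot be trivial, and your sketch never uses torsion-freeness of the factors, which is essential. Concretely, in $\Z_2 \ast \Z_2 = \langle a,b \mid a^2=b^2=1 \rangle$ the element $ab$ is cyclically reduced of length $2$ and acts hyperbolically on the Bass--Serre tree (a line), yet $a(ab)a^{-1}=(ab)^{-1}$, so $(ab)\cdot a(ab)a^{-1}=1$ and $ab$ is a generalized torsion element of order two. The scl route is the right one --- it is essentially how the theorem cited by the paper is proved --- but the gap $scl(w)\geq \frac{1}{2}$ for elements not conjugate into a factor is not ``classical'': it is a theorem of Duncan--Howie for locally indicable factors and of L.~Chen and Chen--Heuer for merely torsion-free factors, and both justifications you offer for it are invalid. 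The claim ``free products of torsion-free groups have no non-trivial generalized torsion'' is false as stated (elements conjugate into a factor can certainly be generalized torsion) and is in any case circular, being precisely the fact you are trying to establish; and bi-orderability of free products (Vinogradov's theorem) requires bi-orderable factors, which torsion-free factors need not be. With the correct citation in place your route does close: a generalized torsion element $w$ with $n \in t(w)$ has $w^{n}\in[G\ast H,G\ast H]$, so $scl(w)$ is defined and Corollary \ref{cor:mix-scl} gives $scl(w)\leq \frac{n-2}{2n}<\frac{1}{2}$, contradicting the gap. In summary: right architecture, but the crucial step is either wrong (the tree argument) or rests on a deep theorem asserted without proof or valid attribution (the scl gap), whereas the paper handles it by citation to \cite{IMT1}.
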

\begin{proof}
By \cite[Theorem 1.5]{IMT1}, a generalized torsion element $x$ of $G\ast H$ is conjugate to a generalized torsion element of $G$ or $H$. Since generalized torsion order is invariant under conjugation, we assume that $x \in G \subset G\ast H$ (or $x \in H$). Since the inclusion map $G \hookrightarrow G\ast H$ is a retract, $gord_G(x)=gord_{G\ast H}(x)$ by Corollary \ref{cor:monotonicity}. Thus $gord^{*}(G\ast H) = gord^{*}(G) \cup gord^{*}(H)$.
\end{proof}

These two results gives the following realization result.

\begin{corollary}
\label{cor:realization}
For every subset $A \subset \N_{\geq 2}$, there exists a countable, torsion-free group $G$ such that $gord^{*}(G) = gord(G)=A$.
\end{corollary}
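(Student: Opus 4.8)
The plan is to realize an arbitrary subset $A \subset \N_{\geq 2}$ by taking a free product of the building-block groups supplied by the preceding lemma and then invoking the free-product additivity theorem. More precisely, for each $n \in A$ the lemma produces a finitely presented torsion-free group $G_n$ with $gord^{*}(G_n) = gord(G_n) = \{n\}$. I would then set
\[ G = \ast_{n \in A} G_n, \]
the free product of this family indexed by $A$. Since $A$ is a subset of $\N$ it is at most countable, each $G_n$ is finitely presented (hence countable), and a countable free product of countable groups is countable, so $G$ is countable as required. A free product of torsion-free groups is torsion-free, so $G$ is torsion-free.

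The core step is to compute $gord^{*}(G)$ and $gord(G)$. The free-product theorem immediately preceding the corollary states that $gord^{*}(G_1 \ast G_2) = gord^{*}(G_1) \cup gord^{*}(G_2)$ for torsion-free factors, and I would extend this to the (possibly infinite) free product over the index set $A$. For the finite case this is the stated theorem applied inductively; for the general case I would appeal to the underlying mechanism, namely that by \cite[Theorem 1.5]{IMT1} every generalized torsion element of a free product is conjugate into one of the free factors, and that each factor $G_n$ is a retract of the free product (the retraction killing all other factors), so that $gord_{G_n}(x) = gord_{G}(x)$ by Corollary \ref{cor:monotonicity}(iii). Combining these gives
\[ gord^{*}(G) = \bigcup_{n \in A} gord^{*}(G_n) = \bigcup_{n \in A} \{n\} = A. \]
Because every $G_n$ is torsion-free, every generalized torsion element of $G$ is a non-torsion generalized torsion element, so $gord(G)$ coincides with $gord^{*}(G)$ and likewise equals $A$.

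The one genuine subtlety — the part I expect to be the main obstacle — is handling the passage from the finite free product theorem (as literally stated for two factors) to an arbitrary, possibly infinite, index set $A$. The retract and conjugacy arguments are local in the sense that any single generalized torsion element, together with the finitely many conjugating elements witnessing its relation, lives in a subgroup generated by finitely many factors; so one can reduce the infinite case to the finite case by noting that a generalized torsion equation involves only finitely many group elements and hence only finitely many of the free factors. I would make this reduction explicit: given $x \in G$ with $n \in t_G(x)$, the witnessing elements $x_1, \dots, x_n$ and $x$ itself involve only finitely many factors $G_{n_1}, \dots, G_{n_r}$, so the whole computation takes place inside the finite free product $G_{n_1} \ast \cdots \ast G_{n_r}$, to which the stated theorem applies inductively. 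Once this localization is in place the remaining identities are formal, so the argument is complete.
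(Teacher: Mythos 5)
Your proposal is correct and follows exactly the paper's intended argument: the paper derives the corollary by combining the lemma giving groups $G_n$ with $gord^{*}(G_n)=\{n\}$ and the free-product theorem, applied to $G=\ast_{n\in A}G_n$. Your explicit treatment of the infinite free product (localizing any generalized torsion equation to a finite sub-free-product and using that each such sub-product is a retract, so Corollary \ref{cor:monotonicity}(iii) applies) is a detail the paper leaves implicit, and it is handled correctly.
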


It is interesting to ask when we can take such a group $G$ finitely generated (or, finitely presented, with suitable complexity assumption on $A$). 
For a torsion spectrum case, Higman-Neumann-Neumann embedding theorem allows us to embed countable groups to finitely generated groups so that the set of torsion elements are the same. For generalized torsion case, we do not know similar embedding is possible or not.

\begin{question}[Higman-Neumann-Neumann embedding preserving generalized torsion equation spectrum/generalized torsion orders]
Let $G$ be a countable group. Is there an embedding of $G$ into a finitely generated group $H$ such that $t_G(g)=t_H(g)$ (or, $gord_G(g)=gord_H(g)$) for all $g \in G$? 
\end{question}

\end{document}